\documentclass{article}
\usepackage{graphicx} 
\usepackage{indentfirst}
\usepackage{geometry}
\usepackage{chngpage}

\usepackage{ulem}    

\usepackage{amssymb,amsfonts,amsmath,amsthm,mathtools}
\usepackage{enumitem}
\usepackage{xcolor}
\usepackage{mdframed}
\mdfdefinestyle{shadeStyle}{backgroundcolor=white,linewidth=0pt,innerleftmargin=1ex,innertopmargin=-0.5ex}
\theoremstyle{plain}
\newmdtheoremenv[style=shadeStyle]{theorem}{Theorem}[section]
\newmdtheoremenv[style=shadeStyle]{lemma}[theorem]{Lemma}
\newmdtheoremenv[style=shadeStyle]{proposition}[theorem]{Proposition}
\newmdtheoremenv[style=shadeStyle]{corollary}[theorem]{Corollary}
\newmdtheoremenv[style=shadeStyle]{remark}[theorem]{Remark}

\renewcommand{\abstract}{Abstract.}
\theoremstyle{definition}
\newcommand{\rd}{\mathrm{d}}
\usepackage{amsmath}
\numberwithin{equation}{section}

\usepackage[T1]{fontenc}
\usepackage{authblk}

\begin{document}
	\title{\bf On a Keller--Segel System with Density-Suppressed Motility, Indirect Signal Production, and External Sources}
	\author[1,2]{Yujiao Sun 
		\thanks{sunyujiao24@mails.ucas.ac.cn}}
	\author[1,2]{Jie Jiang 
		\thanks{Corresponding author: jiang@apm.ac.cn}}
	\affil[1]{Innovation Academy for Precision Measurement Science and Technology, Wuhan Institute of Physics and Mathematics, Chinese Academy of Sciences, Wuhan 430071, China}
	\affil[2]{University of Chinese Academy of Sciences, Beijing 100049, China}
	\date{\today}
	\maketitle
	
	\begin{abstract}
		This paper investigates an initial-Neumann boundary value problem for a  Keller--Segel system with parabolic-parabolic-ODE coupling. The model incorporates a signal-dependent, non-increasing motility function that,  through indirect signal production, captures a self-trapping effect suppressing cellular movement at high densities.
		We establish the global existence of classical solutions in arbitrary spatial dimensions for a broad class of non-increasing motility functions, both with and without external source terms. Furthermore, we demonstrate that any external damping source exhibiting superlinear growth ensures uniform-in-time boundedness. Conversely, in the absence of such damping, solutions may become unbounded as time tends to infinity. More precisely, in the two-dimensional homogeneous case with the exponentially decaying motility function 
		$\gamma(v) = e^{-v}$
		, a critical mass phenomenon emerges: classical solutions remain  uniformly bounded for subcritical initial mass, while supercritical initial masses can lead to infinite-time blow-up.
		Our analysis relies on the construction of carefully designed auxiliary functions along with refined comparison methods and iteration arguments.
		\\
		\\	\textit{Keywords:} Chemotaxis; Indirect signal production; Classical Solutions; Boundedness; Infinite-time blowup  \\    
		\textit{2020 MR Subject Classification:} 35K51; 35K57; 35K59; 35M13; 35Q92 
	\end{abstract}
	
	\section{Introduction}
	Chemotaxis describes the directed movement of biological cells or organisms along chemical concentration gradients.  This process is fundamental to numerous biological phenomena, such as bacterial aggregation, immune responses, embryonic development, and tumor invasion. In 1971, Keller and Segel established the seminal Keller--Segel model \cite{keller1971model} to describe the aggregation of Dictyostelium discoideum. The model consists of the following equations:
	\begin{equation}\label{Classical KS Eqs}
		\left\{
		\begin{aligned}
			&u_t = \nabla\cdot(\gamma(v)\nabla u - \chi(v) u\nabla v), \\
			&\tau v_t = \Delta v - v + u, 
		\end{aligned}
		\right.
	\end{equation}
	with $\tau \geq 0$. Here, $u$ denotes the cell density, and $v$ stands for chemical signal concentration. The functions $\gamma(\cdot)$ and $\chi(\cdot)$ represent the diffusivity and chemo-sensitivity, respectively. In particular, they satisfy $\chi(\cdot) = (\alpha - 1)\gamma'(\cdot)$, where $\alpha\geq 0$ is a rescaled distance between cellular signal receptors. The special case $\alpha=0$ corresponds to a cell with a single receptor, detecting the signal at one point. Under this assumption, system \eqref{Classical KS Eqs} simplifies to
	\begin{equation}\label{Binary system}
		\left\{
		\begin{aligned}
			&u_t = \Delta(\gamma(v)u), \\
			&\tau v_t = \Delta v - v + u.
		\end{aligned}
		\right.
	\end{equation}
	Note that the monotonicity of the motility function $\gamma$ governs the influence of chemical stimuli on cell movement. Specifically, $\gamma'(\cdot) < 0$ models chemo-attraction, with cells moving toward higher signal concentrations. Conversely, $\gamma'(\cdot) > 0$ models chemo-repulsion, with cells moving away from regions of higher chemical concentration. 
	
	As for the initial-boundary value problem of system \eqref{Binary system} with no-flux boundary and  suitably regular initial data, numerous results regarding the existence of global solutions have been established (see, e.g., \cite{burger2021delayed,fujiejiang2021CVPDEcomparison,fujieSenba2022globalNA,Funaki2012NHM,jiangLaurencot2021JDEglobal,jin2020critical,tao2017effects,xiaojiang2023JDEglobal,yoon2017global}), as summarized in the review \cite{winkler2025effects}. Moreover,  boundedness results have been proved under various structural assumptions. Under the condition that $\gamma$ is uniformly bounded from above and below by positive constants,  uniform boundedness of classical solutions is proved for arbitrary spatial dimensions in \cite{xiaojiang2023JDEglobal}, which improves the earlier result in \cite{tao2017effects} for two-dimensional convex domains. In the absence of a strictly positive lower bound for the motility function, boundedness is shown to be related to the decay rate of the motility function at infinity. Indeed, for system \eqref{Binary system} in two dimensions, classical solutions are known to remain globally bounded if $\gamma$ decays slower than exponentially \cite{fujiejiang2021AAMboundedness,jiangLaurencot2022CPDEglobal,xiaojiang2023JDEglobal}. A notable exception is the exponentially decaying motility $\gamma(v)=e^{- v}$. In this case, system \eqref{Binary system} shares the same mathematical features with the Keller--Segel system, such as the Lyapunov functional and stationary problem, but exhibits different mass-critical phenomena \cite{fujiejiang2020JDEglobal,fujiejiang2021CVPDEcomparison,jin2020critical}. Specifically, unlike the finite-time blow-up known for the classical Keller--Segel equations, here certain initial data beyond a threshold mass yield solutions that blow up in infinite time. In higher dimensions $N\geq3$, boundedness is examined for the power-law motility $\gamma(s)=s^{-k}$ with $k\in (0,\frac{N}{(N-2)^+})$, see \cite{ahn2019global,fujieSenba2022globalNonlinearity,jiangLaurencot2021JDEglobal}.

	
	In addition, the nonhomogeneous counterpart of system \eqref{Binary system} has also attracted considerable attention. To study ecological pattern formation, Fu et al. \cite{fu2012stripe} introduce the following model with logistic growth:
	\begin{equation}
		\left\{
		\begin{aligned}
			&u_t = \Delta(\gamma(v)u) + \mu u(1-u),\\
			&\tau v_t = \Delta v - v + u,
		\end{aligned}
		\right.
	\end{equation}
	where $\mu > 0$. Globally bounded classical solutions have been proved under various conditions. In the parabolic–elliptic case ($\tau = 0$) with $N=2$, Jin and Wang \cite{jin2021keller}  prove uniform-in-time boundedness under the assumption that $\sup\limits_{0 \leq s < \infty} \frac{|\gamma'(s)|^2}{\gamma(s)} < \infty$, which is improved subsequently by Fujie and Jiang \cite{fujiejiang2020JDEglobal} by removing this condition and requiring only that $\gamma'\leq 0$. For the parabolic–parabolic system ($\tau > 0$) in two dimensions, boundedness is established by Jin, Kim and Wang \cite{jin2018boundedness} assuming $\gamma'<0$, $\lim\limits_{s\to\infty}\gamma(s)=0$ and $\frac{|\gamma'(s)|}{\gamma(s)} < \infty$. In three dimensions ($N = 3$), if $\gamma$ has positive upper and lower bounds and $|\gamma'|$ is bounded, Liu and Xu \cite{liu2019large} obtain the same conclusion for sufficiently large $\mu$. For $N \geq 3$, under the assumptions $\gamma' \leq 0$, $\lim\limits_{s \to \infty} \gamma(s) = 0$, and $|\gamma'|$ being bounded, Wang and Wang \cite{wang2019boundedness} also prove the existence of globally bounded solutions for sufficiently large $\mu$.
	
	When the standard logistic nonlinearity $\mu u(1 - u)$ is generalized to  $-u f(u)$, additional boundedness results are available in the literature. For $\tau = 0$, $\gamma' \leq 0$ and $-u f(u) = \mu(u - u^k)$ with $k > 1$, Lyu and Wang \cite{lyu2022logistic} demonstrate that the solutions remain boundedness if one of the following conditions is satisfied: (i) $N \leq 2$ for any $\mu>0$ and $k > 1$; (ii) $N \geq 3$ for any $\mu>0$ and $k > 2$; or (iii) $N \geq 3$, $k = 2$ and $\mu$ is sufficiently large. Another case arises when the source term is given by Gompertz-type growth, for which $f$ satisfies $\lim\limits_{s \to \infty} f(s) = \infty$ and $\limsup\limits_{s \to \infty} \frac{f(s)}{\log s} < \infty$. For this nonlinearity, Xiao and Jiang \cite{xiaojiang2024prevention} establish boundedness when $\tau \geq 0$ and $\gamma(s) = e^{-s}$ in dimensions $N \leq3$. Subsequently, Lu and Jiang \cite{lu2025suppression} remove the structural restriction on $\gamma$ and obtain the same conclusion for the parabolic-elliptic system in dimensions $N\leq3$.
	
	The classical Keller--Segel system assumes direct chemoattractant secretion by cells; however, many biologically realistic scenarios involve indirect signal production through intermediate mechanisms. A notable instance is the model for mountain pine beetle (MPB) dispersal and aggregation dynamics in forest habitats, formulated by Strohm et al. \cite{strohm2013pattern}:
	\begin{equation}\label{MPB model}
		\left\{
		\begin{aligned}
			& u_t = \Delta u-\nabla\cdot(u\nabla v)+\mu u-\mu u^l, \\
			& \tau v_t=\Delta v-v+h,  \\
			& h_t+\delta h=u, 
		\end{aligned}
		\right.
	\end{equation}
	where $\mu>0$, $\delta>0$, $\tau\geq0$ and $l>1$. Here $u$ and $h$ represent the densities of flying and nesting MPBs, respectively, while $v$ denotes the concentration of beetle pheromone. 
	For $\tau>0$, the global existence and uniform boundedness of classical solutions for $N\geq2$  have been established in \cite{hu2016exclusion,li2018boundedness} provided $l>\frac{N}{2}$, with the case $l = \frac{N}{2}$ later covered for $N\geq3$ \cite{ren2021boundedness}. For the parabolic-elliptic counterpart ($\tau=0$) with quadratic degradation ($l=2$), global existence holds \cite{ye2022boundedness} either in low dimensions $N\leq 2$, or in higher dimensions $N\geq 3$ with $\mu$ being sufficiently large. Without the logistic damping ($\mu=0$), Lauren\c{c}ot \cite{laurencot2019DCDSBglobal} establishes global solvability and demonstrates an infinite-time critical mass blow-up phenomenon in two dimensions. Subsequently, Soga \cite{soga2025JEEconcentration} analyzes radially symmetric solutions in a disk and concludes that infinite-time blow-up occurs solely at the origin.  We refer to \cite{MIAO2023JDE,Ren2022JMP} for related models featuring indirect signal production.
	
	Combining features of \eqref{Binary system} and \eqref{MPB model}, Lv and Wang \cite{lv2020global} study a chemotaxis system with signal-dependent motility, indirect signal production, and a generalized logistic source:
	\begin{equation}\label{Lv's model}
		\left\{
		\begin{aligned}
			& u_t = \Delta(\gamma(v)u)+\lambda u-\mu u^l, \\
			& v_t=\Delta v-v+h,  \\
			& h_t+\delta h=u,  
		\end{aligned}
		\right.
	\end{equation}
	with $\lambda\in\mathbb{R}$, $\mu>0$, and $\delta>0$. Under the assumptions $\gamma'(\cdot) < 0$, $\sup\limits_{0 \leq s < \infty} \frac{|\gamma'(s)|}{\gamma(s)} < \infty$ and $\ l > \max\left\{1, \frac{N}{2}\right\}$, they establish the global existence and boundedness of classical solutions to system \eqref{Lv's model}.

	In this paper, we study the following initial-Neumann boundary value problem in a smooth bounded
	domain $\Omega\subset\mathbb{R}^N$ with $N\geq1$.
	\begin{subequations}\label{Our Problem}
		\begin{align}
			& u_t = \Delta \left( \gamma(v) u\right) - uf(u),  &(t,x) &\in  (0,\infty) \times\Omega,\label{Eq of u}\\
			& \tau v_t=\Delta v-v+h, &(t,x) &\in  (0,\infty) \times\Omega,\label{Eq of v}\\
			& \delta h_t+h=u, 
			&(t,x) &\in  (0,\infty) \times\Omega,\label{Eq of h}\\
			&\nabla u\cdot\mathbf{n} = \nabla v\cdot\mathbf{n} =0, 
			&(t,x) &\in (0,\infty)\times\partial\Omega,\label{boundcond}\\
			&\left(u,v,h\right)(0,x)=(u_0,v_0,h_0),  
			&x &\in  \Omega,
		\end{align}
	\end{subequations}
	where $\tau>0$, $\delta>0$, and $\mathbf{n}$ denotes the outward unit normal to the boundary.
	We assume that $(u_0,v_0,h_0)$ satisfies the following conditions:
	\begin{equation}\label{A0}
		u_0,\,v_0\in W^{2,\infty}(\Omega),\,h_0\in W^{1,\infty}(\Omega);\quad u_0,\,v_0,\,h_0\geq0,\,u_0\not \equiv 0\,\,\,\text{in}\,\, \Omega;\quad
		\nabla u_0\cdot \mathbf{n} = \nabla v_0\cdot\mathbf{n} =0\,\,\,\text{on}\,\, \partial\Omega. \tag{A0}
	\end{equation}
	For $\gamma(\cdot)$, we require that
	\begin{equation}\label{A1}
		0<\gamma(\cdot)\in C^{3}\left([0,\infty)\right),\quad\gamma'(\cdot)\leq0 \quad \text{on}~~[0,\infty). \tag{A1}
	\end{equation}
	Clearly, $\gamma$ admits a uniform positive upper bound $\gamma^*:=\gamma(0)$. And we denote by $C$ and $C_i$ (with $i$ being a positive integer) generic positive constants that may change from line to line.

	In what follows, we analyze problem \eqref{Our Problem} separately in the homogeneous and nonhomogeneous settings. For the homogeneous problem, we first establish the global existence of classical solutions under the minimal assumption \eqref{A1} on  the motility function. Assuming additionally that $\gamma$ is bounded below by a strictly positive constant, we further prove the uniform boundedness of solutions. Conversely, when such a lower bound is absent, we demonstrate that infinite-time blow-up can occur  in two dimensions for the specific motility function $\gamma(v)=e^{-v}$. Finally, for the nonhomogeneous problem, we prove that any external  source with superlinear damping  guarantees uniform-in-time boundedness of solutions, thereby suppressing the potential blow-up present in the source-free case.
	
	We are now in a position to state our first main result for the homogeneous case of \eqref{Our Problem}. 
	\begin{theorem}\label{Th.1.1}
		Let $\Omega$ be a bounded domain of  $\mathbb{R}^N$($N\geq1$) with smooth boundary. Suppose that the initial data $(u_0,v_0,h_0)$ satisfies condition \eqref{A0}, the motility function $\gamma(\cdot)$ satisfies assumption \eqref{A1} and $f \equiv 0$. Then problem \eqref{Our Problem} has a unique global nonnegative classical solution $(u,v,h)$ such that
		\begin{align*}
			&u \in C^0\left([0,\infty
			)\times\overline{\Omega}\right) \cap C^{1,2}\left((0,\infty)\times\overline{\Omega}\right),\\
			&v \in C^0\left([0,\infty
			)\times\overline{\Omega}\right) \cap C^{1,2}\left((0,\infty)\times\overline{\Omega}\right),\\
			&h\in C^{1}\left([0,\infty); C^{0}(\overline{\Omega})\right).
		\end{align*}
	\end{theorem}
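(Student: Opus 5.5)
Our plan follows the comparison-function approach developed for \eqref{Binary system} (Fujie--Jiang), adapted to the indirect production through $h$. Local well-posedness comes first. For the parabolic--parabolic--ODE system with $\gamma\in C^3$, $\gamma>0$ and data satisfying \eqref{A0}, a fixed-point argument in the spirit of Amann's theory, or a Schauder argument on $(v,h)$, gives a unique maximal classical solution on $[0,T_{\max})$. It comes with the extensibility criterion: if $T_{\max}<\infty$, then $\limsup_{t\to T_{\max}}\|u(t)\|_{L^\infty}=\infty$. Nonnegativity follows from the maximum principle and the ODE for $h$. Mass is conserved, $\int_\Omega u=\int_\Omega u_0$, because $f\equiv0$.

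The key step is an upper bound for $v$ on every finite time interval. Integrating \eqref{Eq of h} gives $h=e^{-t/\delta}h_0+\frac1\delta\int_0^te^{-(t-s)/\delta}u(s)\,ds$. We introduce the auxiliary function $w:=(I-\Delta)^{-1}u$ with Neumann conditions. By the mass bound, $w$ is bounded in $L^1$. Applying $(I-\Delta)^{-1}$ to \eqref{Eq of u} yields the key identity
\[
w_t+\gamma(v)u=(I-\Delta)^{-1}[\gamma(v)u]\le \gamma^*(I-\Delta)^{-1}u=\gamma^* w .
\]
This gives $w_t\le\gamma^* w$, hence $w(t,x)\le e^{\gamma^* t}w(0,x)\le C(T)$ on $[0,T]$. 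For the same operator, $H:=(I-\Delta)^{-1}h$ satisfies $\delta H_t+H=w$, so $H$ is bounded on $[0,T]$. The equation for $v$ is $\tau v_t=\Delta v-v+h$, that is, $(\tau\partial_t+I-\Delta)v=h=(I-\Delta)H$. We then compare $v$ with a multiple of $H$ plus a correction term solving a parabolic equation with data in $L^\infty$-type spaces controlled by $w$. Concretely, $z:=v-H$ satisfies $\tau z_t=\Delta z - z -\tau H_t$, and $H_t=\delta^{-1}(w-H)$ is bounded above on $[0,T]$. The maximum principle then gives $v\le C(T)$, and consequently $\gamma(v)\ge\gamma(C(T))>0$ on $[0,T]$.

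With a positive lower bound for the motility, \eqref{Eq of u} reads $u_t=\nabla\cdot(\gamma(v)\nabla u+u\gamma'(v)\nabla v)$, a uniformly parabolic equation with bounded diffusion coefficient. It remains to control $\nabla v$. From $w_t+\gamma(v)u\le\gamma^*w$ and the bound on $w$ we get $\int_0^T\!\!\int_\Omega \gamma(v)u\,w\le C(T)$-type estimates. These lead to space-time $L^2$ bounds on $u$ via the duality/energy identity $\frac{d}{dt}\int_\Omega(|\nabla w|^2+w^2)+2\int_\Omega\gamma(v)u^2=2\int_\Omega u(I-\Delta)^{-1}[\gamma(v)u]$, whose right side is bounded by $\gamma^*\int u w\le C(T)$. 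So $u\in L^2((0,T)\times\Omega)$, and then $h\in L^2$, which lifts the regularity of $v$ by parabolic regularity. Next we run an $L^p$ energy/Moser iteration: test \eqref{Eq of u} with $u^{p-1}$, handle the cross term $\int u^{p-1}\gamma'(v)\nabla v\cdot\nabla u$ by Young's inequality using the lower bound of $\gamma$ on $[0,T]$, and feed in the $L^q$ bounds of $\nabla v$ obtained from maximal regularity, since $h$ is controlled by $u$ in time-integrated norms. Alternating these bounds raises the integrability step by step to $u\in L^\infty((0,T)\times\Omega)$, which contradicts the extensibility criterion. Hence $T_{\max}=\infty$, and the stated regularity follows from the local theory.

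We expect the main obstacle to be the step from the $L^2$ space-time bound to $L^p$ bounds in arbitrary dimension $N$. The chain $u\to h\to v\to\nabla v$ gains regularity only through the heat semigroup, so the bootstrap must be arranged carefully with $T$-dependent constants. If a direct iteration stalls, the fallback is to use the pointwise identity above in the form $\gamma(v)u\le\gamma^*w-w_t$. Integrating this in time gives pointwise-in-$x$ control of $\int_0^t\gamma(v)u\,ds$, and hence of $\int_0^tu\,ds$ on $[0,T]$ because $\gamma(v)$ is bounded below there. That bounds $h$ pointwise through its Duhamel formula, so $h\in L^\infty$, which then gives $\nabla v\in L^\infty$ and closes the $L^p$ iteration easily.
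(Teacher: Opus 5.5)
Your proof is correct once the fallback is taken as the actual argument. Up to the bound for $v$ it coincides with the paper: your $H$ is the paper's $\eta$, and the $z=v-H$ comparison is Proposition~\ref{Prop: v upper bound}. One small correction there: what you need is $H_t=\delta^{-1}(w-H)$ bounded \emph{below}, which holds since $w\ge0$ and $H\le C(T)$.

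For $u$ you take a different route. The paper works with $\varphi=\gamma(v)u$, which solves $\varphi_t=\gamma(v)\Delta\varphi-\frac{|\gamma'(v)|}{\gamma(v)}\varphi v_t$. It then uses the second key identity \eqref{key identity 2 homo-case}, i.e.\ $\delta v_t=w+\tau\Psi-\tau\psi-v-\rho$ with $w,\Psi\ge0$, to get the one-sided bound $-v_t\le C(T)$. Hence $\varphi_t\le\gamma(v)\Delta\varphi+C(T)\varphi$, and a single comparison gives $u\le\varphi/\gamma_*(T)\le C(T)$, with no gradient estimates and no iteration.

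You instead integrate the \emph{first} key identity in time, obtaining $\int_0^t\gamma(v)u\,\rd s\le w_0+\gamma^*\int_0^tw\,\rd s\le C(T)$. Feeding this into the Duhamel formula for $h$ gives $h\in L^\infty((0,T)\times\Omega)$, and Neumann heat-semigroup estimates then give $\nabla v\in L^\infty((0,T)\times\Omega)$. After that, $u$ solves a linear divergence-form equation with diffusion in $[\gamma_*(T),\gamma^*]$ and bounded drift $\gamma'(v)\nabla v$.

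Your route dispenses entirely with $\Psi,\psi,\rho$ and the decomposition of $v$, using only the first identity and the ODE for $h$, but it must finish with standard parabolic regularity. The paper's route is self-contained and reaches $L^\infty$ directly by comparison.

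Two adjustments are needed:
\begin{enumerate}
\item The final $L^p\to L^\infty$ step needs a Moser-type iteration or a lemma such as \cite[Lemma A.1]{tao2012boundedness}, which the paper itself uses for Theorem~\ref{Th.1.2}. The direct Gronwall bounds $\|u(t)\|_{L^p}\le e^{C(p-1)t}\|u_0\|_{L^p}$ degenerate as $p\to\infty$, so testing with $u^{p-1}$ alone does not close.
\item Your primary route via $u\in L^2((0,T)\times\Omega)$ and maximal regularity is only sketched and, as you suspect, need not close in high dimensions. Drop it and make the fallback the main line, since the fallback gives $h\in L^\infty$ directly.
\end{enumerate}
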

	\begin{remark}
		It is worth noting that our existence result applies to all motility functions satisfying \eqref{A1}, without requiring the additional asymptotic smallness condition $\limsup\limits_{s\rightarrow\infty}\gamma(s)<1/\tau$, which was necessary in the direct signal production framework \cite{fujiejiang2021CVPDEcomparison,jiangLaurencot2022CPDEglobal}.
	\end{remark}
	
	Moreover, under the additional assumption that $\gamma$ admits a strictly positive lower bound, we prove that the solution is uniformly bounded in time.
	
	\begin{theorem}\label{Th.1.2}
		Under the same conditions as those in Theorem \ref{Th.1.1}, we further assume that $\gamma$ satisfies
		\begin{equation}\label{A2}
			\gamma(s) \geq \gamma_* > 0 \qquad \text{for all} \,\, s \geq 0, \tag{A2}
		\end{equation}
		with a generic constant $\gamma_*>0$, then the homogeneous problem of \eqref{Our Problem} admits a global  classical solution $(u,v,h)$ that is uniformly bounded in time. More precisely, there is a constant $C>0$ depending only on $\Omega$, $\delta$, $\tau$, $\gamma$ and the initial data such that
		\begin{equation*}
			\sup\limits_{t \geq 0} \left(\|u(t)\|_{L^\infty(\Omega)} + \|v(t)\|_{L^\infty(\Omega)} + \|h(t)\|_{L^\infty(\Omega)}\right) \leq C.
		\end{equation*}
	\end{theorem}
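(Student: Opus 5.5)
The plan is to reduce everything to a uniform-in-time $L^\infty$ bound for the auxiliary function $w:=(I-\Delta)^{-1}u$, where $(I-\Delta)^{-1}$ is taken with Neumann boundary conditions. Since $f\equiv0$, integrating \eqref{Eq of u} gives conservation of mass, $\int_\Omega u=\int_\Omega u_0$. Hence $\int_\Omega w$ is conserved, and integrating \eqref{Eq of h} and \eqref{Eq of v} shows that $\int_\Omega h$ and $\int_\Omega v$ are bounded uniformly in time. Applying $(I-\Delta)^{-1}$ to \eqref{Eq of u} gives the key identity
\[
w_t+\gamma(v)u=(I-\Delta)^{-1}[\gamma(v)u]\le \gamma^* w .
\]
Testing \eqref{Eq of u} against $w$ gives
\[
\frac12\frac{\rd}{\rd t}\big(\|\nabla w\|_2^2+\|w\|_2^2\big)+\int_\Omega\gamma(v)u^2=\int_\Omega\gamma(v)uw\le\gamma^*\int_\Omega uw .
\]
The right-hand side equals $\gamma^*(\|\nabla w\|_2^2+\|w\|_2^2)$. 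By H\"older's inequality it is bounded by $\gamma^*\|u\|_2\|w\|_2$. We then interpolate $\|w\|_2$ between $\|w\|_1$, which is conserved, and a higher norm of $w$ controlled by $\|u\|_2$ through elliptic regularity. This gives $\int_\Omega uw\le C\|u\|_2^{2-a}$ for some $a>0$, hence $\int_\Omega uw\le\varepsilon\|u\|_2^2+C$. With \eqref{A2} we can absorb this into $\gamma_*\|u\|_2^2$ and add the energy itself to the left-hand side. The resulting ODE inequality yields $\sup_t\|w\|_{H^1}\le C$ and $\sup_t\int_t^{t+1}\!\int_\Omega u^2\le C$.

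Next, the bound on $w$ transfers to $v$ and $h$ by comparison. Set $z:=(I-\Delta)^{-1}h$; then \eqref{Eq of h} gives $\delta z_t+z=w$, so $\|z\|_\infty\le C(1+\sup_s\|w(s)\|_\infty)$. Moreover $v-z$ solves
\[
(\tau\partial_t-\Delta+1)(v-z)=\tfrac{\tau}{\delta}(z-w),
\]
so the maximum principle bounds $\|v\|_\infty$ by the sup-norms of $w$ and $z$. In this way an $L^\infty$ bound on $w$ yields a uniform bound $0\le v\le K$. Consequently $\gamma(v)$, $\gamma'(v)$ and $\gamma''(v)$ are bounded, and $\gamma(v)\ge\gamma_*$ is uniformly positive. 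We then obtain $L^p$ bounds for $u$ by testing $u_t=\nabla\cdot(\gamma\nabla u+\gamma'(v)u\nabla v)$ with $u^{p-1}$. The cross term is controlled by $\|\nabla v\|_{L^q}$, which comes from maximal regularity for \eqref{Eq of v} driven by $h$. In turn $h$ is estimated from $u$ through \eqref{Eq of h}, so the estimates for $u$, $h$ and $v$ close together in a bootstrap of increasing exponents, starting from the space-time $L^2$ bound above. Finally, an Alikakos--Moser iteration gives $\sup_t\|u\|_\infty\le C$, and the bounds on $h$ and $v$ follow from \eqref{Eq of h} and \eqref{Eq of v}.

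The main obstacle is the uniform $L^\infty$ bound on $w$ in arbitrary dimension. I would write the key identity in nondivergence form, using $u=w-\Delta w$:
\[
w_t-\gamma(v)\Delta w+\gamma(v)w=(I-\Delta)^{-1}[\gamma(v)u]\le\gamma^*w .
\]
This is a subsolution inequality for a nondivergence parabolic operator whose coefficient $\gamma(v)$ is measurable and lies in $[\gamma_*,\gamma^*]$ by \eqref{A2}. A Krylov--Safonov/ABP-type local maximum principle on cylinders $(t,t+1)\times\Omega$, with the Neumann condition handled by reflection, bounds $\sup w$ there by $\|w\|_{L^{N+1}}$ on a slightly larger cylinder. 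When $N$ is large, the space-time $L^{N+1}$ bound on $w$ does not follow directly from $u\in L^2_{t,x}$. I would first try to obtain it by an iterative improvement: test the same inequality with powers $w^{p}$, using $w\ge0$ and the fact that the zeroth-order coefficient gap $\gamma^*-\gamma_*$ can be controlled by the conserved $L^1$ norm of $w$ and Gagliardo--Nirenberg interpolation. Each step raises the integrability exponent uniformly in time until $p>N+1$.
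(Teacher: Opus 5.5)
There is a genuine gap in the step from $0\le v\le K$ to $L^p$ bounds for $u$. The rest of your outline is sound: the identity from testing with $w$ is correct and gives $\sup_t\|w\|_{H^1}\le C$ and $\int_t^{t+1}\|u\|_{L^2}^2\le C$, and passing from an $L^\infty$ bound on $w$ to one on $v$ is the paper's own argument. Testing $u_t=\nabla\cdot(\gamma(v)\nabla u+\gamma'(v)u\nabla v)$ with $u^{p-1}$ leaves
\[
\frac{d}{dt}\int_\Omega u^p+c_p\int_\Omega|\nabla u^{p/2}|^2\le C_p\int_\Omega u^p|\nabla v|^2 .
\]
Hölder and Gagliardo--Nirenberg absorb the right-hand side only when $\nabla v\in L^\infty_tL^q$ for some $q>N$. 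That in turn requires control of $\sup_t\|h\|_{L^r}$ for some $r>N/2$.

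Your starting information $\int_t^{t+1}\|u\|_{L^2}^2\le C$ gives $\sup_t\|h\|_{L^2}\le C$, and hence $\nabla v\in L^\infty_tL^q$ only for $q<\frac{2N}{N-2}$. This exceeds $N$ only when $N\le 3$. For $N\ge4$ the bootstrap has no admissible starting exponent, and improving $h$ first requires improving $u$, which is circular. The bound $\|v\|_{L^\infty}\le K$ does not lower this Keller--Segel-type threshold. For example, the integration by parts $\int_\Omega u^p|\nabla v|^2=-\int_\Omega (v-c)\nabla\cdot(u^p\nabla v)$ closes only under a smallness condition on $\|v\|_{L^\infty}$. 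Since the theorem is claimed for every $N\ge1$, saying the estimates ``close together in a bootstrap of increasing exponents'' hides exactly the point that fails.

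The missing idea is to avoid $\nabla v$ altogether. The decomposition identity $w+\tau\Psi=\tau\psi+\delta v_t+v+\rho$ has $w,\Psi\ge0$ and $\psi,v,\rho$ bounded uniformly in time, with $\psi$ bounded as a consequence of the bound on $w$. It therefore gives the one-sided estimate $v_t\ge -C$. Because $\gamma'\le0$, $\varphi=\gamma(v)u$ satisfies $\varphi_t=\gamma(v)\Delta\varphi-\frac{|\gamma'(v)|}{\gamma(v)}\varphi v_t\le \gamma(v)\Delta\varphi+C\varphi$. Since $u_t=\Delta\varphi$, multiplying by $u\varphi^{p-1}$ gives
\[
\frac1p\frac{d}{dt}\int_\Omega\varphi^p u+\frac{4}{p+1}\|\nabla\varphi^{\frac{p+1}{2}}\|_{L^2(\Omega)}^2\le C\int_\Omega\varphi^p u ,
\]
which has no cross term at all. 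Together with $\gamma_*u\le\varphi\le\gamma^*u$, Gagliardo--Nirenberg and the uniform Gronwall lemma, this yields $\sup_t\|u\|_{L^p}\le C(p)$ for every $p$ in every dimension. Only after that are $\nabla v\in L^\infty$ and the final Moser-type bootstrap available.

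Separately, your Krylov--Safonov route for $w$ is unnecessary. Since $u\ge0$ and $\gamma\ge\gamma_*$, you have $\gamma(v)u\ge\gamma_*\mathcal{A}[w]$, so $w_t+\gamma_*\mathcal{A}[w]\le\gamma^*w$ has constant coefficients. Comparison plus $L^1$--$L^p$ smoothing of $e^{-\gamma_*\mathcal{A}t}$, which decays like $e^{-\gamma_* t}$, then bounds $w$. The same inequality $\int_\Omega\gamma(v)uw^p\ge\gamma_*\int_\Omega uw^p$ is what makes your proposed testing with powers of $w$ work without derivatives of $\gamma(v)$.
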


	Next, we focus on the specific case $\gamma(v)=e^{-v}$, showing a threshold phenomenon.
	\begin{theorem}\label{Th.1.3}
		Let $\Omega\subset\mathbb{R}^2$ be a domain with smooth boundary. Assume that $f \equiv 0$, $\gamma(v) = e^{-v}$, $\tau = \delta = 1$ and $(u_0,v_0,h_0)$ satisfies \eqref{A0}. Let 
		\begin{equation*}
			M =
			\begin{cases}
				8\pi & \text{if }~ \Omega = B_R(0) \triangleq \{x \in \mathbb{R}^2; \ |x| < R\} \text{ and } (u_0, v_0, h_0) \text{ is radial in } x, \\
				4\pi & \text{otherwise}.
			\end{cases}
		\end{equation*}
		Then, 
		\begin{enumerate} [label=(\roman*), leftmargin=*]
			\item if $m \triangleq \int_\Omega u_0\;\rd x < M$, the global classical solution to \eqref{Our Problem} is uniformly bounded in time;
			\item there exists nonnegative initial data satisfying \eqref{A0} with $m\in(8\pi,\infty)\setminus4\pi\mathbb{N}$ such that the corresponding solution to \eqref{Our Problem} blows up at time infinity, i.e.,
			\begin{equation*}
				\limsup\limits_{t\to\infty} \left(\|u(t)\|_{L^\infty(\Omega)} + \|v(t)\|_{L^\infty(\Omega)} +\|h(t)\|_{L^\infty(\Omega)}\right)= \infty.
			\end{equation*}
		\end{enumerate}
	\end{theorem}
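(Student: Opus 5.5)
The natural tool is a Lyapunov functional for the case $\gamma(v)=e^{-v}$, $\tau=\delta=1$. I will build one, use the sharp Moser--Trudinger inequality to bound it below for subcritical mass, and argue by contradiction via stationary states for the blow-up part.

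\textbf{Lyapunov functional.} With $\gamma(v)=e^{-v}$ the equation for $u$ reads $u_t=\nabla\cdot\big(e^{-v}u\,\nabla(\log u - v)\big)$. This suggests testing \eqref{Eq of u} with $\log u - v$, which gives
\[
\frac{\rd}{\rd t}\int_\Omega u\log u\,\rd x-\int_\Omega u v_t\,\rd x = -\int_\Omega e^{-v}u|\nabla(\log u-v)|^2\,\rd x .
\]
Since $u=h_t+h$, we have $\int_\Omega u v_t=\int_\Omega h_t v_t+\int_\Omega h v_t$. Testing \eqref{Eq of v} with $v_t$ gives $\int_\Omega h v_t=\|v_t\|_2^2+\frac12\frac{\rd}{\rd t}(\|\nabla v\|_2^2+\|v\|_2^2)$. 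The cross term $\int_\Omega h_t v_t$ is the delicate part. I would differentiate \eqref{Eq of v} in time, $v_{tt}=\Delta v_t-v_t+h_t$, and test it with $v_t$ to write $\int_\Omega h_t v_t=\frac12\frac{\rd}{\rd t}\|v_t\|_2^2+\|\nabla v_t\|_2^2+\|v_t\|_2^2$. This should yield an energy of the form
\[
\mathcal F(u,v)=\int_\Omega u\log u-\tfrac12\|\nabla v\|_2^2-\tfrac12\|v\|_2^2-\tfrac12\|v_t\|_2^2,
\]
which is nonincreasing with a nonnegative dissipation $\mathcal D$. The signs need checking; if they fail, the fallback is to include $\int_\Omega h v$ terms.

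\textbf{Part (i).} For $m<M$, I would rewrite $\|\nabla v\|_2^2+\|v\|_2^2=\int_\Omega h v$ up to $v_t$ terms. Then I use Jensen/duality in the form $\int_\Omega u\log u\ge \int_\Omega u v - m\log\int_\Omega e^{v}+C$, together with the Moser--Trudinger inequality. The constant is $\frac{1}{8\pi}$ in the radial case and $\frac1{4\pi}$ for general domains with Neumann boundary conditions. This yields $\mathcal F\ge -C$ and $\|\nabla v\|_2+\|v_t\|_2\le C$ uniformly in time, provided the ODE mismatch between $u$ and $h$ can be absorbed using $h$ as a time average of $u$. Then $\int_\Omega e^{pv}\le C$ for all $p$ uniformly in time. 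Next I would transfer this to an $L^p$ bound on $h$ via the comparison/auxiliary-function approach that presumably underlies the proof of Theorem \ref{Th.1.2}: with the key identity $(-\Delta+1)^{-1}$ applied to $u$, the $u$-equation yields $\partial_t w+e^{-v}u=\ldots$. Finally, parabolic regularity and a Moser--Alikakos iteration give $L^\infty$ bounds.

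\textbf{Part (ii).} I argue by contradiction, following Lauren\c{c}ot's and Fujie--Jiang's strategy. Suppose the solution is bounded. Then it is precompact, and by LaSalle and integrability of $\mathcal D$ its $\omega$-limit consists of stationary solutions $h=u=\lambda e^{v}$, $-\Delta v+v=u$, $\int_\Omega u=m$. For $m\notin 4\pi\mathbb N$, the set of such steady states has energies bounded below (compactness of the Liouville-type equation away from quantized masses). I then construct initial data with $m>8\pi$ concentrating at a boundary point (or use radial data) so that $\mathcal F(u_0,v_0)$ falls below that bound; this is possible because $\inf\mathcal F=-\infty$ for $m>8\pi$. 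Monotonicity of $\mathcal F$ then gives a contradiction. Global existence from Theorem \ref{Th.1.1} then upgrades the conclusion to infinite-time blow-up.

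\textbf{Main obstacle.} The main obstacle is the exact Lyapunov structure with the extra ODE component, in particular controlling $v_t$ and reconciling $u$ and $h$ in the Moser--Trudinger lower bound.
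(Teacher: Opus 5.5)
There is a genuine gap, and it is in the very first identity. Testing $u_t=\nabla\cdot\bigl(ue^{-v}\nabla(\ln u-v)\bigr)$ with $\ln u-v$ and using $\int_\Omega u_t\,\rd x=0$ gives
\[
\int_\Omega u_t(\ln u-v)\,\rd x=\frac{\rd}{\rd t}\int_\Omega (u\ln u-uv)\,\rd x+\int_\Omega uv_t\,\rd x .
\]
It does not give $\frac{\rd}{\rd t}\int_\Omega u\ln u\,\rd x-\int_\Omega uv_t\,\rd x$. You replaced $-\int_\Omega u_tv\,\rd x$ by $-\int_\Omega uv_t\,\rd x$, which loses the term $-\frac{\rd}{\rd t}\int_\Omega uv\,\rd x$ and flips the sign of $\int_\Omega uv_t\,\rd x$.

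As a result, your $\mathcal F$ lacks $-\int_\Omega uv\,\rd x$ and has the wrong sign on $\|\nabla v\|_{L^2(\Omega)}^2$, $\|v\|_{L^2(\Omega)}^2$ and $\|v_t\|_{L^2(\Omega)}^2$. It satisfies $\frac{\rd}{\rd t}\mathcal F=-\int_\Omega ue^{-v}|\nabla(\ln u-v)|^2\,\rd x+\|\nabla v_t\|_{L^2(\Omega)}^2+2\|v_t\|_{L^2(\Omega)}^2$, which has no sign. So it is not a Lyapunov functional. The bound on $\|\nabla v\|_{L^2(\Omega)}+\|v_t\|_{L^2(\Omega)}$ you claim in (i) cannot follow from it, since those terms enter with a minus sign, and the monotonicity you use in (ii) fails. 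Your fallback of adding $\int_\Omega hv\,\rd x$ does not repair this.

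Your computations of $\int_\Omega hv_t\,\rd x$ and $\int_\Omega h_tv_t\,\rd x$ are correct. Inserted into the correct identity, they give exactly the paper's functional
\[
\mathcal F(u,v,h)=\int_\Omega (u\ln u-uv)\,\rd x+\frac12\bigl(\|v\|_{L^2(\Omega)}^2+\|\nabla v\|_{L^2(\Omega)}^2\bigr)+\frac12\|\Delta v-v+h\|_{L^2(\Omega)}^2,
\]
with dissipation $\int_\Omega ue^{-v}|\nabla(\ln u-v)|^2\,\rd x+\|\nabla v_t\|_{L^2(\Omega)}^2+2\|v_t\|_{L^2(\Omega)}^2\ge 0$.

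With this correction, the ``main obstacle'' you identify disappears. Since $\Delta v-v+h=v_t$, one has $\mathcal F=\mathcal F_0(u,v)+\frac12\|v_t\|_{L^2(\Omega)}^2\ge\mathcal F_0(u,v)$, where $\mathcal F_0$ is exactly the classical Keller--Segel functional in $(u,v)$. The Nagai--Senba--Yoshida lower bound via Moser--Trudinger therefore applies verbatim, with no $u$/$h$ mismatch and no time averaging. In the normalization $\ln\int_\Omega e^{v}\,\rd x\le c\|\nabla v\|_{L^2(\Omega)}^2+\dots$ you need $c=\frac1{8\pi}$, resp.\ $c=\frac1{16\pi}+\varepsilon$ for radial data, to reach the thresholds $4\pi$ and $8\pi$.

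Your step after $\int_\Omega e^{kv}\,\rd x\le C$ is also vaguer than it needs to be. The paper uses this estimate to bound $w=\mathcal A^{-1}[u]$ uniformly in time. It then bounds $\eta$ and $v$ as in Proposition~\ref{Prop: uniform upper bound for v in homo-case}, so $e^{-v}\ge e^{-v^*}$, and Theorem~\ref{Th.1.2} does the rest.

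In (ii), the correct functional contains $+\frac12\|\Delta v_0-v_0+h_0\|_{L^2(\Omega)}^2$. So besides making $\mathcal F_0(u_0,v_0)\to-\infty$, you must choose $h_0\ge0$ keeping this term bounded as the bubble concentrates. Your sign on $\|v_t\|_{L^2(\Omega)}^2$ hides this issue, and your proposal never addresses $h_0$. The paper takes $h_0=-\Delta v_0+v_0+K$ with $K$ independent of $\lambda$. This works because $-\Delta\overline v_{\lambda,r}=\overline u_\lambda>0$ and the cut-off terms live on the annulus $r_1\le|x|\le r$. There $\lambda^2|x|/(1+\lambda^2|x|^2)\le 1/r_1$ and $\overline v_{\lambda,r}\le 4\ln(r/r_1)+\ln 8$.

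The rest of your plan for (ii) matches the paper's argument: compactness of bounded trajectories, convergence to $\mathcal S_m$, the lower bound $\lambda_m$ for $m\notin4\pi\mathbb N$, and the contradiction via monotonicity.
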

	
	For the nonhomogeneous case with the presence of superlinear damping source terms,  we can also prove the global existence and uniform boundedness of classical solutions.
	
	\begin{theorem}\label{Th.1.4}
		Let $\Omega$ be a bounded domain of  $\mathbb{R}^N$($N\geq1$) with smooth boundary. Suppose that the initial data $(u_0,v_0,h_0)$ satisfies condition \eqref{A0}, the motility function $\gamma(\cdot)$ satisfies assumption \eqref{A1}, and that $f(\cdot)$ satisfies 
		\begin{equation}\label{H}
			f(s)\in C^1([0,\infty)),\quad \lim\limits_{s\to \infty}f(s) = +\infty. \tag{H}
		\end{equation}
		Then problem \eqref{Our Problem} admits a unique global nonnegative classical solution that is uniformly bounded in time.
	\end{theorem}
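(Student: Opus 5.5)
We will prove Theorem \ref{Th.1.4} by combining local well-posedness, a comparison argument that exploits the indirect production of $v$, and an $L^p$ iteration driven by the superlinear damping. Local existence, uniqueness and the blow-up criterion
\[
T_{\max}<\infty\ \Longrightarrow\ \limsup_{t\to T_{\max}}\bigl(\|u(t)\|_{L^\infty}+\|v(t)\|_{L^\infty}+\|h(t)\|_{L^\infty}\bigr)=\infty
\]
will be taken from the same fixed-point framework as in Theorem \ref{Th.1.1}. Since $f\in C^1$ with $f\to\infty$, $f$ is bounded below by some $-\lambda$. Integrating \eqref{Eq of u} then gives
\[
\frac{\rd}{\rd t}\int_\Omega u\,\rd x=-\int_\Omega uf(u)\,\rd x\le \lambda|\Omega| s_0-\int_\Omega u\,\rd x
\]
for a suitable threshold $s_0$ beyond which $f\ge 1$. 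Hence $\sup_t\|u(t)\|_{L^1}\le C$, and consequently $\sup_t\|h(t)\|_{L^1}$ and $\sup_t\|v(t)\|_{L^1}$ are uniformly bounded.

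The key structural step is to introduce the auxiliary function $w=(I-\Delta)^{-1}\bigl[\tau v+\delta h\bigr]$, or more simply to work with $\delta h_t+h=u$ and the $v$-equation directly. Following the approach for \eqref{Binary system}, we would rewrite \eqref{Eq of u} as
\[
u_t=\Delta(\gamma(v)u)-uf(u),
\]
apply $(I-\Delta)^{-1}$, and obtain a key identity of the form
\[
\partial_t (I-\Delta)^{-1}u+\gamma(v)u=(I-\Delta)^{-1}\bigl[\gamma(v)u-uf(u)\bigr].
\]
Using $\gamma\le\gamma^*$, $\gamma'\le0$, and the fact that $u=\delta h_t+h$, we aim at a pointwise upper bound for $h$, and hence for $v$ by the parabolic comparison principle, that is uniform in time. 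Concretely, the damping term $-(I-\Delta)^{-1}[uf(u)]$ should absorb the growth: since $sf(s)\ge s-C$ and $\gamma(v)u\le\gamma^* u$, we obtain
\[
\partial_t(I-\Delta)^{-1}u\le (I-\Delta)^{-1}\bigl[(\gamma^*-1)u\bigr]+C,
\]
which we would combine with an ODE-type comparison to bound $(I-\Delta)^{-1}u$. If that is too crude, we would instead exploit the superlinearity directly: choose $K$ large so that $f\ge \gamma^*+1$ on $[K,\infty)$. The upshot should be $\|v\|_{L^\infty}\le C$ uniformly in $t$, and therefore $\gamma(v)\ge\gamma(\|v\|_\infty)=:\gamma_*>0$. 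This step also gives global existence, since without the damping a local-in-time version of the same bound holds, as in Theorem \ref{Th.1.1}.

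Once $v$ is bounded, $\gamma(v)$ is bounded above and below, and we run an $L^p$ estimate. We test \eqref{Eq of u} with $u^{p-1}$:
\[
\frac1p\frac{\rd}{\rd t}\int u^p+(p-1)\int\gamma(v)u^{p-2}|\nabla u|^2=-(p-1)\int\gamma'(v)u^{p-1}\nabla u\cdot\nabla v-\int u^pf(u).
\]
The cross term is controlled through Young's inequality by $C_p\int u^p|\nabla v|^2$. For this we need a bound on $\nabla v$, for instance in $L^q$ via maximal regularity for the $v$-equation fed by $h$, where $h$ is bounded in terms of $u$ in $L^p$ through the ODE $h=e^{-t/\delta}h_0+\delta^{-1}\int_0^t e^{-(t-s)/\delta}u(s)\,\rd s$. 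The superlinear term $\int u^pf(u)\ge L\int u^p-C_L$, with $L$ arbitrary, absorbs whatever linear-in-$\int u^p$ growth arises, which yields $\sup_t\|u\|_{L^p}<\infty$ for every $p$. A Moser--Alikakos iteration, or semigroup estimates, then gives the $L^\infty$ bounds for $u$, $h$ and $v$.

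I expect the main obstacle to be the uniform-in-time $L^\infty$ bound on $v$ (equivalently, a lower bound on $\gamma(v)$). Without \eqref{A2}, motility can degenerate, and the comparison must use the damping in a quantitative way that is uniform in $t$, despite the delay introduced by the $h$-equation. A fallback plan is to derive first $\sup_t\|u\|_{L^p}$ for some $p>N/2$ using the damping, coupled with a weighted energy $\int u^p\gamma(v)^{q}$, and then bootstrap to bound $v$.
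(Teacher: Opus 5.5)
\textbf{The $w$- and $v$-steps.} Your bound on $w=\mathcal{A}^{-1}[u]$ is correct once you use your fallback. Choosing $K$ with $f\ge\gamma^*+1$ on $[K,\infty)$ gives $uf(u)\ge(\gamma^*+1)u-C_K$, hence $w_t\le -w+C_K$, which is exactly the paper's argument. Your first version, based on $sf(s)\ge s-C$, only yields $w_t\le(\gamma^*-1)w+C$ and fails when $\gamma^*>1$.

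The passage from $w$ to $v$, however, cannot go through a pointwise bound on $h$, since nothing in that identity controls $h$ pointwise. What you actually get is a bound on $\eta=\mathcal{A}^{-1}[h]$ and on $\eta_t$ from $\delta\eta_t+\eta=w$. Then $v$ is bounded by comparison, because $\tau(v-\eta)_t-\Delta(v-\eta)+(v-\eta)=-\tau\eta_t=\frac{\tau}{\delta}(\eta-w)$ has a bounded right-hand side. This step has to be made explicit, but it is repairable.

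\textbf{The real gap: the $L^p$ step for $u$.} After Young's inequality the cross term is $C_p\int_\Omega u^p|\nabla v|^2\,\rd x$. This is bounded by a multiple of $\int_\Omega u^p\,\rd x$ only if $\sup_t\|\nabla v(t)\|_{L^\infty(\Omega)}<\infty$. Through the $v$- and $h$-equations, that requires $\sup_t\|u(t)\|_{L^q(\Omega)}<\infty$ for some $q>N$, which is what you are trying to prove.

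With $\nabla v$ merely in $L^s$, absorbing $\|u^{p/2}\|^2_{L^{2s/(s-2)}}\|\nabla v\|^2_{L^s}$ into $\|\nabla u^{p/2}\|^2_{L^2}$ by Gagliardo--Nirenberg needs $s>N$. The available $L^1$ information gives only $s<\frac{N}{N-1}$. Moreover, (H) allows damping as weak as $f(s)=\ln\ln(e+s)$, so $\int_\Omega u^pf(u)\,\rd x$ cannot absorb any genuinely superlinear coupling. This is why energy methods for \eqref{Lv's model} need $\mu u^l$ with $l>\max\{1,\frac N2\}$. Hence for $N\ge2$ your iteration does not start. Your weighted fallback $\int_\Omega u^p\gamma(v)^q\,\rd x$ would in turn need a one-sided bound on $v_t$, which you never derive.

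\textbf{The missing idea: avoid $\nabla v$ altogether.} The function $\varphi=\gamma(v)u$ satisfies $\varphi_t=\gamma(v)\Delta\varphi-\varphi f(u)-\frac{|\gamma'(v)|}{\gamma(v)}\varphi v_t$. The decomposition \eqref{key identity 2}, $\delta v_t=w+\tau\Psi+\tau g-\tau\psi-v-\rho$, gives $\delta v_t\ge\tau g-\tau\psi-v-\rho\ge-C$ uniformly in time. This uses $w,\Psi\ge0$, $\psi\le\gamma^*w^*$, $\|\rho\|_{L^\infty}\le C$, and $g\ge-C$ (because $uf(u)\ge-C$). Consequently $\varphi_t\le\gamma(v)\Delta\varphi+C\varphi-\varphi f(u)\le\gamma(v)\Delta\varphi+C-\varphi$, by $\gamma_*u\le\varphi\le\gamma^*u$ and \eqref{f(s) inequality}. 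The maximum principle then gives $\varphi\le C$, so $u\le C/\gamma_*$, and the bound on $h$ follows from its ODE. This pointwise comparison uses the damping only through $uf(u)\ge Lu-C_L$ for arbitrary $L$. That is why an arbitrarily weak superlinear source suffices in every dimension.
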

	
	\begin{remark}
		It is worth recalling that the uniform boundedness result in \cite{lv2020global} relies on the hypotheses  $uf(u)=\mu u^l-\lambda u$ with $\ l > \max\left\{1, \frac{N}{2}\right\}$, and $\sup\limits_{0 \leq s < \infty} \frac{|\gamma'(s)|}{\gamma(s)} < \infty$, which notably exclude rapidly decaying motility function (e.g., $\gamma(v)=e^{-v^2}$). By contrast, our result requires no structural assumptions on $\gamma$; instead, an external damping source with mere superlinear growth suffices to ensure uniform‑in‑time boundedness of the solutions in all space dimensions.
	\end{remark}

	\noindent \textbf{Main ideas.}
	Since the signal-dependent motility function may vanish when $v$  becomes unbounded, establishing an upper bound for $v$ is a crucial first step. To this end, we adapt the approach  developed in \cite{jiangLaurencot2024globalarXiv} by introducing suitable auxiliary functions that yield an explicit decomposition structure for $v$. The desired upper bound then follows from comparison arguments.
	
	Once the upper bound of $v$ is obtained, the conventional strategy in literature (e.g., \cite{fujieSenba2022globalNA,jiangLaurencot2021JDEglobal,jiangLaurencot2022CPDEglobal}) proceeds by first deriving a H\"older estimate for $v$, which then permits the application of semigroup theory to establish higher-order regularity of $v$, with estimates for $u$ subsequently recovered through the system coupling.
	
	By contrast,  the present work develops a substantially simplified approach to  bound  $u$ directly. We introduce the key quantity $\varphi\triangleq\gamma(v)u$, which satisfies a parabolic equation dual to that of $u$. Leveraging the aforementioned decomposition structure along with the monotonicity of $\gamma$, we employ comparison argument or energy method to derive the boundedness of $\varphi$, from which the bound for $u$ follows immediately.

	The remainder of this paper is structured as follows. In Section \ref{section 2}, we prove local well-posedness result and recall several useful lemmas. In Section \ref{section 3}, we introduce several auxiliary functions and construct two key identities. In Section \ref{section 4}, we establish the global existence of solutions to the homogeneous case of \eqref{Our Problem}. Then, in Section \ref{section 5}, under the further assumption that $\gamma$ admits a uniform positive lower bound, we demonstrate the uniform-in-time boundedness. In Section \ref{section 6}, we study the special case  $\gamma(v)=e^{-v}$ showing a threshold phenomenon where infinite-time  blow-up may occur for certain large-mass initial data. Finally, in Section \ref{section 7}, we prove that the mere presence of a superlinear source term suffices to guarantee the global boundedness of solutions.

	\section{Preliminary}\label{section 2}
	We first state the local existence and uniqueness of classical solutions to system \eqref{Our Problem}. Since the proof follows from the standard fixed-point argument and the regularity theory for parabolic equations, see e.g., \cite{jin2018boundedness,tao2011chemotaxis}, we omit the details here.
	
	\begin{theorem}\label{Th.2.1}
		Let $\Omega \subset \mathbb{R}^N$ with $N\geq1$ be a smooth bounded domain. Suppose $0<\gamma(\cdot)\in C^3([0,\infty))$ and $f(\cdot)\in C^1([0,\infty))$. Then for any given initial data $(u_0,v_0,h_0)$ satisfying \eqref{A0}, problem \eqref{Our Problem} admits a unique nonnegative classical solution
		\begin{align*}
			&u\in C^{0}\left([0,T_{\max})\times\overline{\Omega}\right) \cap  C^{1,2}\left((0,T_{\max})\times\overline{\Omega}\right),\\
			&v\in C^{0}\left([0,T_{\max})\times\overline{\Omega}\right) \cap C^{1,2}\left((0,T_{\max})\times\overline{\Omega}\right),\\
			&h\in C^{1}\left([0,T_{\max});C^{0}(\overline{\Omega})\right),
		\end{align*}
		with $T_{\max}\in(0,\infty]$. If $T_{\max} < \infty,$ then
		\begin{equation}\label{blow-up at Tmax}
			\lim\limits_{t \to T_{\max}^-}  \| u(t,\cdot) \|_{L^\infty(\Omega)}  = \infty.
		\end{equation}
	\end{theorem}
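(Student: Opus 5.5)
The plan is a Banach fixed-point argument for short times, followed by a bootstrap to classical regularity and a continuation argument giving the blow-up alternative \eqref{blow-up at Tmax}. The ODE \eqref{Eq of h} is solved explicitly:
\[
h(t)=e^{-t/\delta}h_0+\frac1\delta\int_0^t e^{-(t-s)/\delta}u(s)\,\rd s.
\]
So $h$ is a continuous functional of $u$ with values in $C^1([0,T];C^0(\overline\Omega))$. Moreover, if $u\in C^0([0,T];W^{1,\infty})$, then $h$ keeps $W^{1,\infty}$ regularity.

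Fix $T\in(0,1)$ and $R>\|u_0\|_{L^\infty}+\|v_0\|_{W^{1,\infty}}+1$. Let $X$ be the closed set of $\tilde u\in C^0([0,T]\times\overline\Omega)$ with $\|\tilde u\|_{L^\infty}\le R$. Given $\tilde u\in X$, the construction proceeds in three steps.
\begin{enumerate}[label=(\arabic*), leftmargin=*]
\item Define $\tilde h$ by the formula above.
\item Solve $\tau v_t=\Delta v-v+\tilde h$ with Neumann data. Neumann heat semigroup estimates give $\|v\|_{L^\infty(0,T;W^{1,q})}\le C(R)$ for all $q<\infty$, together with a H\"older bound on $v$ and $\nabla v$.
\item Let $u=\Phi(\tilde u)$ solve the linear equation
\[
u_t=\gamma(v)\Delta u+2\gamma'(v)\nabla v\cdot\nabla u+\bigl(\Delta\gamma(v)\bigr)u-uf(\tilde u)
\]
with Neumann data.
\end{enumerate}

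The term $\Delta\gamma(v)=\gamma'(v)\Delta v+\gamma''(v)|\nabla v|^2$ needs second derivatives of $v$. I expect controlling it to be the main technical obstacle, since $\tilde h$ is only continuous. Two remedies are available. The first is to write the $u$-equation in divergence form $u_t=\nabla\cdot(\gamma(v)\nabla u+\gamma'(v)u\nabla v)-uf(\tilde u)$, where only $\nabla v\in L^\infty$ is needed; then De Giorgi--Nash/Moser and $L^p$ theory give an $L^\infty$ bound and a H\"older bound for $u$. The second is to work with the quantity $\varphi=\gamma(v)u$, which satisfies the nondivergence equation
\[
\varphi_t=\gamma(v)\Delta\varphi+\frac{\gamma'(v)}{\gamma(v)}v_t\varphi-\varphi f(\tilde u),
\]
where $v_t$ is bounded by maximal regularity. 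I will use the divergence form. Because $\gamma\ge\gamma(\|v\|_\infty)>0$ on the relevant range, the equation is uniformly parabolic, and for $T$ small $\|u\|_{L^\infty}\le R$. Hence $\Phi$ maps $X$ into itself, and compactness follows from the H\"older estimate. For contraction, subtract the equations for two inputs and run an $L^2$ or duality estimate, which gives a Lipschitz constant of order $CT^{\theta}$. Alternatively, use Schauder's theorem for existence and prove uniqueness separately by an energy estimate on the differences.

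Nonnegativity comes from the comparison principle applied to the linear $u$-equation, since the zero-order coefficient is bounded. Positivity of $v$ and $h$ then follows from the positivity of the heat semigroup and from the variation-of-constants formula.

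Regularity is bootstrapped as follows. Once $u$ is H\"older, $\tilde h=h$ is H\"older in $x$; this uses $h_0\in W^{1,\infty}$ and the fact that the time integral of H\"older functions is H\"older. Schauder theory then gives $v\in C^{1+\alpha/2,2+\alpha}$ away from $t=0$, continuous up to $t=0$ because $v_0\in W^{2,\infty}$. With this, the nondivergence $u$-equation has H\"older coefficients and $u\in C^{1,2}$, while $h\in C^1([0,T];C^0)$ directly from the ODE.

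Finally, extension and the blow-up criterion. The existence time $T$ depends only on an upper bound for $\|u(t_0)\|_{L^\infty}+\|v(t_0)\|_{W^{1,\infty}}+\|h(t_0)\|_{L^\infty}$ at the restart time $t_0$. If $\|u\|_{L^\infty}$ stays bounded on $[0,T_{\max})$ with $T_{\max}<\infty$, then $h$ is bounded by the ODE formula and $\nabla v$ is bounded by semigroup estimates. Restarting near $T_{\max}$ then contradicts maximality, which yields \eqref{blow-up at Tmax}. Uniqueness on the whole interval follows from local uniqueness by a standard continuity argument.
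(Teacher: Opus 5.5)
Your route is the ``standard fixed-point argument and parabolic regularity theory'' that the paper invokes without details: solve the $h$-equation explicitly, solve the $v$-equation with $\tilde h$, treat the $u$-equation as a linear problem, close by a fixed point, bootstrap with Schauder theory, and continue. The construction part is sound in outline.

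\textbf{The gap is in the last step.} Your contradiction hypothesis is that $\|u\|_{L^\infty(\Omega)}$ stays bounded on $[0,T_{\max})$. What this actually yields is $\sup_{t<T_{\max}}\|u(t)\|_{L^\infty(\Omega)}=\infty$, i.e.\ $\limsup_{t\to T_{\max}^-}\|u(t)\|_{L^\infty(\Omega)}=\infty$, not the limit asserted in \eqref{blow-up at Tmax}. To upgrade it, you would have to restart from times $t_k\to T_{\max}$ at which only $\|u(t_k)\|_{L^\infty(\Omega)}\le K$ is known. But by your own account the existence time also depends on $\|v(t_k)\|_{W^{1,\infty}(\Omega)}+\|h(t_k)\|_{L^\infty(\Omega)}$. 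Here $h(t_k)=e^{-t_k/\delta}h_0+\frac1\delta\int_0^{t_k}e^{-(t_k-s)/\delta}u(s)\,\rd s$, and hence also $\nabla v(t_k)$, depends on the whole history of $u$ on $(0,t_k)$, not on $u(t_k)$. Nothing in your argument controls these quantities along such a sequence. What your argument does prove is either the $\limsup$ version, or the $\lim$ version for the combined quantity $\|u\|_{L^\infty}+\|v\|_{W^{1,\infty}}+\|h\|_{L^\infty}$, since your local time depends only on a bound for it. Either state the criterion in one of these forms or supply an extra argument. The paper only ever uses the contrapositive (boundedness of $u$ on every $[0,T]$, $T<T_{\max}$, forces $T_{\max}=\infty$), for which the $\limsup$ form suffices.

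\textbf{Smaller points to fix.}
\begin{itemize}
\item Since $\gamma$ and $f$ are only defined on $[0,\infty)$, $X$ must consist of nonnegative functions. Your comparison argument then gives $\Phi(X)\subset X$.
\item Theorem~\ref{Th.2.1} does not assume $\gamma'\le0$, so the ellipticity constant is $\min_{[0,\|v\|_\infty]}\gamma$, not $\gamma(\|v\|_\infty)$.
\item An $L^2$ or duality estimate on differences yields a contraction only in an integral metric, not in the sup norm you put on $X$. An example is $L^2((0,T)\times\Omega)$ on $\{0\le\tilde u\le R\}$, which is complete in that metric. Your Schauder-plus-uniqueness alternative avoids this issue.
\item Maximal regularity does not give $v_t\in L^\infty$, but $v_t$ is bounded for another reason. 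We have $\tau(v_t)_t=\Delta v_t-v_t+\tilde h_t$ with $\tilde h_t=(\tilde u-\tilde h)/\delta\in L^\infty$ and $v_t(0)=\tau^{-1}(\Delta v_0-v_0+h_0)\in L^\infty$, so the maximum principle applies. This is the structure behind \eqref{delta v_t + v solves}. With it, your $\varphi$-route gives the $L^\infty$ bound by plain comparison, $\|\varphi(t)\|_{L^\infty}\le\|\gamma(v_0)u_0\|_{L^\infty}e^{Ct}$. That is closer to how the paper itself handles $u$ and avoids De Giorgi--Nash--Moser.
\end{itemize}
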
\

	For the homogeneous case ($f \equiv 0)$, it is directly observed from the equations that system \eqref{Our Problem} satisfies mass conservation, and the $L^1$-norms of $v$ and $h$ remain bounded.
	\begin{lemma}\label{Lemma:u,v,h L1-estimates}
		Assume that $f\equiv0$. Then the classical solution $(u,v,h)$ to \eqref{Our Problem} satisfies mass conservation
		\begin{equation}\label{u: mass conservation}
			\int_\Omega u \;\rd x = \int_\Omega u_0 \;\rd x \quad \text{for all }  t\in[0,T_{\max}).
		\end{equation}
		Moreover, there holds
		\begin{equation}\label{h: L1}
			\|h\|_{L^1(\Omega)} \leq \max\left\{\|h_0\|_{L^1(\Omega)},\, \|u_0\|_{L^1(\Omega)}\right\}
			\quad \text{for all }  t\in[0,T_{\max}),
		\end{equation}
		\begin{equation}\label{v: L1}
			\|v\|_{L^1(\Omega)} \leq \max\left\{\|v_0\|_{L^1(\Omega)},\, \|h_0\|_{L^1(\Omega)},\, \|u_0\|_{L^1(\Omega)} \right\}
			\quad \text{for all }  t\in[0,T_{\max}).
		\end{equation}
	\end{lemma}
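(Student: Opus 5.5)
Integrating each equation of \eqref{Our Problem} over $\Omega$ and using the boundary conditions \eqref{boundcond} reduces everything to linear ODE comparisons for the spatial averages. Nonnegativity of $u,v,h$ on $[0,T_{\max})$ is provided by Theorem \ref{Th.2.1}, so all $L^1$-norms coincide with integrals.

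\textbf{Mass conservation.} For $f\equiv 0$, integrate \eqref{Eq of u} over $\Omega$. We have $\nabla(\gamma(v)u)\cdot\mathbf{n} = \gamma'(v)u\,\nabla v\cdot\mathbf{n} + \gamma(v)\nabla u\cdot\mathbf{n} = 0$ on $\partial\Omega$ by \eqref{boundcond}. The divergence theorem then gives $\frac{\rd}{\rd t}\int_\Omega u\,\rd x = 0$ for $t\in(0,T_{\max})$. Continuity of $u$ on $[0,T_{\max})\times\overline{\Omega}$ extends this to $t=0$, which yields \eqref{u: mass conservation}.

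\textbf{Bound on $h$.} Set $H(t)=\int_\Omega h\,\rd x$ and $m=\int_\Omega u_0\,\rd x$. Integrating \eqref{Eq of h} (using $h\in C^1([0,T_{\max});C^0(\overline\Omega))$) gives
\[
\delta H' + H = m .
\]
Solving explicitly,
\[
H(t) = e^{-t/\delta}H(0) + \left(1-e^{-t/\delta}\right)m ,
\]
which is a convex combination of $H(0)$ and $m$. Hence $H(t)\le\max\{\|h_0\|_{L^1(\Omega)},m\}$, which is \eqref{h: L1}.

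\textbf{Bound on $v$.} Similarly, $V(t)=\int_\Omega v\,\rd x$ satisfies
\[
\tau V' + V = H(t),
\]
since $\int_\Omega\Delta v\,\rd x=0$ by the Neumann condition. Let $K=\max\{\|v_0\|_{L^1(\Omega)},\|h_0\|_{L^1(\Omega)},\|u_0\|_{L^1(\Omega)}\}$. Variation of constants gives
\[
V(t)=e^{-t/\tau}V(0)+\frac1\tau\int_0^t e^{-(t-s)/\tau}H(s)\,\rd s .
\]
Using $H\le K$, this is at most $e^{-t/\tau}K+(1-e^{-t/\tau})K=K$, which is \eqref{v: L1}.

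The only point needing care is regularity at $t=0$. $u$ and $v$ are only $C^{1,2}$ for $t>0$, so I would differentiate the integrals on $(0,T_{\max})$, integrate the resulting identities on $[\varepsilon,t]$, and let $\varepsilon\to0$ using continuity on $[0,T_{\max})\times\overline\Omega$. Apart from this, the argument is routine, and I expect no real obstacle.
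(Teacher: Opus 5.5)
Your proof is correct and follows the paper's argument: integrate each equation over $\Omega$, use mass conservation, and solve the resulting linear ODEs for $\int_\Omega h\,\rd x$ and $\int_\Omega v\,\rd x$. Your variation-of-constants formula for $V$ is the precise form of the paper's identity, which writes $(1-e^{-t/\tau})\|h\|_{L^1(\Omega)}$ as though $\|h\|_{L^1(\Omega)}$ were constant in time; the resulting bound is the same.
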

	\begin{proof}
		In view of the boundary condition \eqref{boundcond}, a direct integration of \eqref{Eq of u} on $\Omega$ yields \eqref{u: mass conservation} .
		Similarly, we deduce that
		$$\delta\dfrac{d}{dt}\int_\Omega h \;\rd x + \int_\Omega h \;\rd x = \int_\Omega u \;\rd x,$$
		which yields that
		$$\|h\|_{L^1(\Omega)} = e^{-\frac{t}{\delta}} \|h_0\|_{L^1(\Omega)}  + (1-e^{-\frac{t}{\delta}}) \|u\|_{L^1(\Omega)}.$$
		In the same manner, we infer that
		$$\|v\|_{L^1(\Omega)} = e^{-\frac{t}{\tau}} \|v_0\|_{L^1(\Omega)}  + (1-e^{-\frac{t}{\tau}}) \|h\|_{L^1(\Omega)} ,$$
		Thus, \eqref{h: L1} and \eqref{v: L1} follow immediately. This completes the proof.
	\end{proof}
	
	The following lemma provides the lower estimates for $sf(s)$ for all $s\geq 0$.
	\begin{lemma}\label{Lemma: f(s) inequality}
		Suppose that $f$ satisfies the hypothesis \eqref{H} in Theorem \ref{Th.1.4}. Then for any $\varepsilon>0$, there exists a constant $C(\varepsilon)>0$ depending only on $\varepsilon$ such that 
		\begin{equation}\label{f(s) inequality}
			s\leq \varepsilon s f(s)+C(\varepsilon) \quad\quad \text{for all}\ s\geq 0.
		\end{equation}
	\end{lemma}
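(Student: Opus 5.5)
The plan is to split $[0,\infty)$ at a threshold determined by $\varepsilon$, using the growth of $f$ on the far piece and continuity of $f$ on the compact near piece. Fix $\varepsilon>0$. By \eqref{H}, $\lim_{s\to\infty}f(s)=+\infty$, so there is $s_\varepsilon>0$ with
\[
f(s)\geq \frac{1}{\varepsilon}\qquad\text{for all } s\geq s_\varepsilon .
\]

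\emph{Large $s$.} For $s\geq s_\varepsilon$, multiplying by $\varepsilon s\geq0$ gives $\varepsilon s f(s)\geq s$. Hence \eqref{f(s) inequality} holds there with any constant $C(\varepsilon)\geq0$.

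\emph{Small $s$.} On $[0,s_\varepsilon]$, $f$ is continuous by \eqref{H}, so $K_\varepsilon:=\max_{[0,s_\varepsilon]}|f|<\infty$. Note that $f$ may be negative there, which is the only thing to handle. Then
\[
s-\varepsilon s f(s)\leq s+\varepsilon s K_\varepsilon\leq s_\varepsilon\,(1+\varepsilon K_\varepsilon)\qquad\text{for all } s\in[0,s_\varepsilon].
\]

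Taking $C(\varepsilon):=s_\varepsilon(1+\varepsilon K_\varepsilon)+1>0$, which depends only on $\varepsilon$ (and on the fixed $f$), gives \eqref{f(s) inequality} on both ranges, hence for all $s\geq0$. There is no real obstacle. The only point needing care is that $f$ need not be nonnegative on bounded sets, and the compactness of $[0,s_\varepsilon]$ together with continuity of $f$ takes care of it.
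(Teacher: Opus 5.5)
Your proof is correct and is essentially the paper's argument: pick $s_\varepsilon$ so that $f\ge 1/\varepsilon$ on $[s_\varepsilon,\infty)$, then bound $s-\varepsilon s f(s)$ on the compact interval $[0,s_\varepsilon]$ using continuity of $f$. Your explicit constant $s_\varepsilon(1+\varepsilon K_\varepsilon)+1$ also guarantees $C(\varepsilon)>0$ strictly, which the paper's choice $\max_{[0,s_\varepsilon]}|s-\varepsilon s f(s)|$ does not formally ensure in degenerate cases.
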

	\begin{proof}
		Since $f(\cdot)$ satisfies \eqref{H}, we can infer that for any $\varepsilon>0$, there exists $s_\varepsilon>1$ such that $f(s)\geq \frac{1}{\varepsilon}$ for all $s\geq s_\varepsilon$, which implies $s\leq \varepsilon sf(s)$. 
		For $0\leq s<s_\varepsilon$, note that
		$$C(\varepsilon):=\max\limits_{0\leq s\leq s_\varepsilon}|s-\varepsilon s f(s)|<\infty$$ 
		due to the continuity of $f$, which completes the proof. 
	\end{proof}
	
	The following 2D Moser-Trudinger inequality \cite[Proposition 2.3]{C&Y1988JDG} \cite[Theorem 2.1]{NagaiSenbaYoshida1997Application} is needed when we derive a lower bound for the Lyapunov functional.
	
	\begin{lemma}\label{Lemma: Moser-Trudinger inequality}
		Suppose $\Omega$ is a smooth bounded domain in $\mathbb{R}^2$. There exists $K_0 > 0$ depending only on $\Omega$ such that, for $z \in W^{1,2}(\Omega)$,
		\begin{equation}\label{Moser-Trudinger inequality}
			\int_{\Omega} e^{|z|} \, \mathrm{d}x \leq K_0 \exp\left( \frac{\|\nabla z\|_{L^2(\Omega)}^2}{8\pi} + \frac{\|z\|_{L^1(\Omega)}}{|\Omega|} \right).
		\end{equation}
		In particular, when $\Omega=\{x\in\mathbb{R}^2;|x|< R\}$ and $z \in W^{1,2}(\Omega)$ with $z(x)=z(|x|)$, for any $\varepsilon>0$, there exists $K_\varepsilon>0$ depending on $\varepsilon$ and $\Omega$ such that
		\begin{equation}\label{Moser-Trudinger inequality in a disk}
			\int_{\Omega} e^{|z|} \, \mathrm{d}x \leq K_\varepsilon \exp\left( (\frac{1}{16\pi} + \varepsilon)\|\nabla z\|_{L^2(\Omega)}^2 + \frac{2\|z\|_{L^1(\Omega)}}{|\Omega|} \right).
		\end{equation}
	\end{lemma}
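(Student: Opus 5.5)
The plan is to deduce both estimates from a Moser--Trudinger inequality of Chang--Yang type for mean-zero functions, combined with an elementary Young inequality. Write $\bar z := \frac{1}{|\Omega|}\int_\Omega z\,\rd x$, so that $|\bar z|\le \|z\|_{L^1(\Omega)}/|\Omega|$. By the triangle inequality,
\begin{equation*}
e^{|z|}\le e^{|z-\bar z|}\,e^{|\bar z|}\le e^{|z-\bar z|}\exp\Big(\frac{\|z\|_{L^1(\Omega)}}{|\Omega|}\Big).
\end{equation*}
This reduces \eqref{Moser-Trudinger inequality} to the bound $\int_\Omega e^{|w|}\,\rd x\le K_0\exp\big(\|\nabla w\|_{L^2(\Omega)}^2/(8\pi)\big)$ for $w:=z-\bar z$, which has mean zero. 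If $\nabla w\equiv 0$, then $w\equiv 0$ and the bound is trivial.

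The analytic input is the Neumann-type Moser--Trudinger inequality on a smooth bounded planar domain. It states that
\begin{equation*}
\sup\Big\{\int_\Omega e^{2\pi \psi^2}\,\rd x:\ \psi\in W^{1,2}(\Omega),\ \int_\Omega\psi\,\rd x=0,\ \|\nabla\psi\|_{L^2(\Omega)}\le 1\Big\}<\infty.
\end{equation*}
The constant is $2\pi$ rather than $4\pi$ because concentration can occur at a boundary point, where only half a disk is available. I would cite this from Chang--Yang, or prove it by flattening the boundary, reflecting evenly to reduce to the interior Moser inequality with constant $4\pi$, and using a partition of unity. Given this, apply Young's inequality $ab\le 2\pi a^2+b^2/(8\pi)$ with $a=|w|/\|\nabla w\|_{L^2}$ and $b=\|\nabla w\|_{L^2}$:
\begin{equation*}
|w|\le 2\pi\frac{w^2}{\|\nabla w\|_{L^2}^2}+\frac{\|\nabla w\|_{L^2}^2}{8\pi}.
\end{equation*}
Taking $\psi=w/\|\nabla w\|_{L^2}$ in the supremum and using $\nabla w=\nabla z$ gives \eqref{Moser-Trudinger inequality}.

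For the radial estimate \eqref{Moser-Trudinger inequality in a disk} on $B_R(0)$, the point is that a radial function cannot concentrate at a boundary point. The only possible concentration point is the origin, where a full neighbourhood is available, so the sharp constant $4\pi$ should be recovered up to an $\varepsilon$-loss. I would split $z=\eta z+(1-\eta)z$ using a radial cutoff $\eta$ equal to $1$ on $B_{R/2}$ and supported in $B_{3R/4}$. On the annulus $R/4\le |x|\le R$, the one-dimensional Strauss-type estimate for radial functions bounds $|z(r)-z(s)|\le C\|\nabla z\|_{L^2}$, and an averaging argument bounds $|z|\le C\|\nabla z\|_{L^2}+C\|z\|_{L^1}$. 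Absorbing $C\|\nabla z\|_{L^2}$ into $\varepsilon\|\nabla z\|_{L^2}^2+C_\varepsilon$ controls $\int e^{|z|}$ on the annulus. On the inner part, $\eta z$ (suitably shifted by a constant) lies in $W^{1,2}_0(B_{3R/4})$, and the Dirichlet Moser inequality $\int e^{4\pi\psi^2}\le C$ for $\|\nabla\psi\|_{L^2}\le 1$ applies. Young's inequality with $4\pi$ in place of $2\pi$ then yields the exponent $\|\nabla(\eta z)\|_{L^2}^2/(16\pi)$. The main obstacle is the cutoff step: one must show $\|\nabla(\eta z)\|_{L^2}^2\le(1+\varepsilon)\|\nabla z\|_{L^2}^2+C_\varepsilon\|z\|_{L^1}^2$-type bounds, with the constant shift and the $L^1$ term entering only linearly in the exponent. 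My first attempt would subtract the value $z(R/2)$ before cutting off, control that value by the radial lemma and the $L^1$ norm, and accept the cruder factor $2$ in front of $\|z\|_{L^1}/|\Omega|$, which is exactly the form stated.
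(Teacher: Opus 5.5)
The paper does not prove this lemma; it quotes it from Chang--Yang (Proposition 2.3) and Nagai--Senba--Yoshida (Theorem 2.1). Your proof of \eqref{Moser-Trudinger inequality} is correct and is the standard way this form is obtained from the Chang--Yang inequality: subtract the mean, apply the Chang--Yang bound with exponent $2\pi$, and use $|w|\le 2\pi w^2/\|\nabla w\|_{L^2}^2+\|\nabla w\|_{L^2}^2/(8\pi)$. Keep the citation rather than the self-contained alternative. The exact coefficient $\frac{1}{8\pi}$ needs the endpoint exponent $2\pi$, and a flattening/reflection/partition-of-unity argument loses a factor at the cutoffs, so it naturally gives only exponents below $2\pi$.

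The radial estimate \eqref{Moser-Trudinger inequality in a disk} has a genuine gap, located exactly at the cutoff step you flag. A bound $\|\nabla(\eta z)\|_{L^2}^2\le(1+\varepsilon)\|\nabla z\|_{L^2}^2+C_\varepsilon\|z\|_{L^1}^2$ would put $\|z\|_{L^1}^2$ into the exponent, which the statement does not permit. Your fallback, subtracting $c:=z(R/2)$ and then cutting off, does not repair this as planned. The extra terms $2\eta(z-c)\nabla\eta\cdot\nabla z+(z-c)^2|\nabla\eta|^2$ live on $\{R/2<|x|<3R/4\}$, where the radial lemma controls $|z-c|$ only by a fixed multiple of $\|\nabla z\|_{L^2}$. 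The resulting bound on these terms is a multiple of $\|\nabla z\|_{L^2}^2$ that cannot be made small, so you only get $\|\nabla(\eta(z-c))\|_{L^2}^2\le C'\|\nabla z\|_{L^2}^2$ with some $C'>1$. Since $z\mapsto\lambda z$ scales the gradient terms quadratically but the $L^1$ term only linearly, any fixed factor $C'>1$ destroys the coefficient $\frac{1}{16\pi}$.

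The missing observation is that no cutoff is needed. Because $z-c$ vanishes on $\partial B_{R/2}$, its restriction lies in $W^{1,2}_0(B_{R/2})$ with $\|\nabla(z-c)\|_{L^2(B_{R/2})}\le\|\nabla z\|_{L^2(\Omega)}$. Moser's inequality ($\int_{B_{R/2}}e^{4\pi\psi^2}\,\rd x\le C$ for $\psi\in W^{1,2}_0(B_{R/2})$ with $\|\nabla\psi\|_{L^2}\le 1$), combined with $|g|\le 4\pi g^2/\|\nabla g\|_{L^2}^2+\|\nabla g\|_{L^2}^2/(16\pi)$, gives $\int_{B_{R/2}}e^{|z-c|}\,\rd x\le C\exp\big(\|\nabla z\|_{L^2}^2/(16\pi)\big)$. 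For $R/2\le r\le R$ the radial lemma gives $|z(r)-c|\le C_1\|\nabla z\|_{L^2}$ with $C_1=(\ln 2/(2\pi))^{1/2}$. Averaging this over $A:=B_R\setminus B_{R/2}$, where $|A|=\frac34|\Omega|$, yields $|c|\le\frac43\|z\|_{L^1}/|\Omega|+C_1\|\nabla z\|_{L^2}$. Therefore
\begin{equation*}
\int_\Omega e^{|z|}\,\rd x\le e^{|c|}\Big(C\,e^{\|\nabla z\|_{L^2}^2/(16\pi)}+|A|\,e^{C_1\|\nabla z\|_{L^2}}\Big).
\end{equation*}
Absorbing $2C_1\|\nabla z\|_{L^2}\le\varepsilon\|\nabla z\|_{L^2}^2+C_1^2/\varepsilon$ then gives \eqref{Moser-Trudinger inequality in a disk}, even with $\frac43$ in place of $2$ in front of $\|z\|_{L^1}/|\Omega|$.
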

	
	In addition, we need the following uniform Gronwall inequality \cite[Chapter III, Lemma
	1.1]{temam1988infinite} to deduce uniform-in-time estimates for the solutions.
	\begin{lemma}\label{Lemma: uniform Gronwall inequality}
		Let $g$, $h$, $y$ be three positive locally integrable functions on $(t_0,\infty)$ such that $y'$ is locally integrable on $(t_0,\infty)$ and the following inequalities are satisfied:
		\begin{equation*}
			\begin{aligned}
				&y'(t) \leq g(t)y(t) + h(t)\quad \text{for } t \geq t_0,\\
				&\int_t^{t+r}g(s)\;\rd s \leq a_1,\,\,\int_t^{t+r}h(s)\;\rd s \leq a_2,\,\,\int_t^{t+r}y(s)\;\rd s \leq a_3 \quad \text{for } t \geq t_0,
			\end{aligned}
		\end{equation*}
		where $r$, $a_i$ ($i=1,2,3$) are positive constants. Then
		\begin{equation*}
			y(t+r) \leq \left(\frac{a_3}{r} + a_2\right)e^{a_1}, \quad \forall\, t \geq t_0.
		\end{equation*}
	\end{lemma}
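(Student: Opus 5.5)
The plan is the classical integrating-factor argument, applied on a sliding window of length $r$ and then averaged over the starting point. Fix $t\geq t_0$ and pick an arbitrary $s\in[t,t+r]$. Since $y'$ is locally integrable, $y$ is locally absolutely continuous on $(t_0,\infty)$, so the product rule and the fundamental theorem of calculus hold for $y$ times an absolutely continuous weight. I introduce the integrating factor $E_s(\sigma)=\exp\bigl(-\int_s^\sigma g(\theta)\,\rd\theta\bigr)$ for $\sigma\in[s,t+r]$. Because $g\geq0$, this weight satisfies $0<E_s\leq1$.

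From the differential inequality, for a.e.\ $\sigma$,
\begin{equation*}
\frac{\rd}{\rd\sigma}\bigl(y(\sigma)E_s(\sigma)\bigr)=\bigl(y'(\sigma)-g(\sigma)y(\sigma)\bigr)E_s(\sigma)\leq h(\sigma)E_s(\sigma)\leq h(\sigma),
\end{equation*}
where the last step uses $h>0$ and $E_s\leq1$. Integrating from $s$ to $t+r$ gives
\begin{equation*}
y(t+r)\exp\Bigl(-\int_s^{t+r}g\,\rd\theta\Bigr)\leq y(s)+\int_s^{t+r}h\,\rd\theta\leq y(s)+a_2 .
\end{equation*}
Since $g\geq0$ and $[s,t+r]\subset[t,t+r]$, the exponent is at most $a_1$ in absolute value. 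Hence
\begin{equation*}
y(t+r)\leq \bigl(y(s)+a_2\bigr)e^{a_1}\qquad\text{for every } s\in[t,t+r].
\end{equation*}

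Next I integrate this inequality in $s$ over $[t,t+r]$. Using $\int_t^{t+r}y\,\rd s\leq a_3$, this yields $r\,y(t+r)\leq (a_3+ra_2)e^{a_1}$. Dividing by $r$ gives the stated bound $y(t+r)\leq\bigl(\frac{a_3}{r}+a_2\bigr)e^{a_1}$.

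The only delicate point is justifying the calculus with merely locally integrable $g,h,y'$. This reduces to the absolute continuity of $y$ and of $E_s$, so that their product is absolutely continuous. Positivity of $g$ and $h$ is what permits the crude bounds $E_s\leq1$ and $\int_s^{t+r}g\leq a_1$ uniformly in $s$.
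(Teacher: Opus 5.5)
Your proof is correct and is essentially the standard argument from Temam's monograph (Chapter III, Lemma 1.1), which the paper cites rather than proving itself. There, too, the integrating factor on $[s,t+r]$ gives $y(t+r)\leq (y(s)+a_2)e^{a_1}$ for each $s\in[t,t+r]$, and averaging over $s$ using $\int_t^{t+r}y\,\rd s\leq a_3$ yields the claim; the only care needed is the absolute continuity you already point out, and the endpoint $s=t_0$ (where $y$ need not be defined) is harmless because the averaging uses only almost every $s$.
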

	
	\section{Introduction of auxiliary functions}\label{section 3}
	To establish the upper bound for $v$, we  introduce several auxiliary functions based on the approach initially proposed in \cite{fujiejiang2021CVPDEcomparison} and recently refined in \cite{jiangLaurencot2024globalarXiv}.
	
	To begin with, we define	
	\begin{equation*}
		\begin{aligned}
			&D(\mathcal{A}):=\left\{z\in H^2(\Omega) : \nabla z\cdot \mathbf{n}=0\,\,\text{on }\partial \Omega\right\},\\
			&\mathcal{A}[z]:=z-\Delta z,\quad z\in D(\mathcal{A}).
		\end{aligned}
	\end{equation*}
	Here, $\Delta$ denotes the usual Laplace operator supplemented with homogeneous Neumann boundary conditions. Recall that $\mathcal{A}$ generates an analytic semigroup on $L^p(\Omega)$ and is invertible on $L^p(\Omega)$ for all $p\in(1,\infty)$. We then set
	\begin{equation}\label{Def: w}
		w(t):=\mathcal{A}^{-1}[u(t)]\geq0, \quad t\in[0,T_{\max}),     
	\end{equation}
	and
	\begin{equation}\label{Def: eta}
		\eta(t):=\mathcal{A}^{-1}[h(t)]\geq0, \quad t\in[0,T_{\max}), 
	\end{equation}
	where the nonnegativity of $w$ and $\eta$ being a consequence of that of $u$ and $h$,  and the elliptic comparison principle. Due to the time continuity of $u$ and $h$, 
	\begin{equation}\label{w_0,eta_0}
		w_0:=w(0)=\mathcal{A}^{-1}[u_0],\quad  \eta_0:=\eta(0)=\mathcal{A}^{-1}[h_0],
	\end{equation}
	and it follows from \eqref{A0} that $w_0\in W^{3,\infty}(\Omega)$ and $\eta_0\in C^2(\overline{\Omega})$. Moreover, it follows from the elliptic comparison principle that
	\begin{equation*}
		\|w_0\|_{L^\infty(\Omega)} = \|\mathcal{A}^{-1}[u_0]\|_{L^\infty(\Omega)} \leq \|u_0\|_{L^\infty(\Omega)},
	\end{equation*}and
	\begin{equation*}
		\|\eta_0\|_{L^\infty(\Omega)} = \|\mathcal{A}^{-1}[h_0]\|_{L^\infty(\Omega)} \leq \|h_0\|_{L^\infty(\Omega)}.
	\end{equation*}
	For convenience, we denote
	\begin{equation}\label{Def: varphi & G(u)}
		\varphi := u\gamma(v),\quad 
		G(u) :=  \mathcal{A}^{-1}[uf(u)],
	\end{equation}
	and infer from the nonnegativity of $u$ and $\gamma$ that $\varphi \geq 0$.
	
	Next, let $\Psi$  be the unique solution to
	\begin{equation}\label{Psi solves}
		\left\{
		\begin{aligned}
			&\mathcal{L}[\Psi]:=\tau\Psi_t-\Delta \Psi+\Psi = \varphi, 
			&(t,x) &\in (0,T_{\max}) \times \Omega,\\
			&\nabla \Psi \cdot \mathbf{n} = 0,
			&(t,x) &\in (0,T_{\max}) \times \partial\Omega,\\
			&\Psi(0) = 0,
			&x &\in \Omega.  
		\end{aligned}
		\right.
	\end{equation}
	By the parabolic comparison principle and the nonnegativity of $\varphi$, we infer that $\Psi(t,x) \geq 0$ for $(t,x) \in (0,T_{\max}) \times \Omega$. Then denote $\psi \triangleq\mathcal{A}^{-1}[\Psi] \geq 0$ and we can immediately deduce from \eqref{Psi solves} that $\psi$ solves
	\begin{equation}\label{psi solves}
		\left\{
		\begin{aligned}
			&\mathcal{L}[\psi] = \mathcal{A}^{-1}[\varphi], 
			&(t,x) &\in (0,T_{\max}) \times \Omega,\\
			&\nabla \psi \cdot \mathbf{n} = 0,
			&(t,x) &\in (0,T_{\max}) \times \partial\Omega,\\
			&\psi(0) = 0,
			&x &\in \Omega.  
		\end{aligned}
		\right.
	\end{equation}
	In the same manner, we define $g$ as the unique solution to
	\begin{equation}\label{g solves}
		\left\{
		\begin{aligned}
			&\mathcal{L}[g] = G(u) = \mathcal{A}^{-1}[uf(u)], 
			&(t,x) &\in (0,T_{\max}) \times \Omega,\\
			&\nabla g \cdot \mathbf{n} = 0,
			&(t,x) &\in (0,T_{\max}) \times \partial\Omega,\\
			&g(0) = 0,
			&x &\in \Omega.  
		\end{aligned}
		\right.
	\end{equation}
	Recall that $v$ solves $\mathcal{L}[v] = h$ in $(0,T_{\max}) \times \Omega$ with $\nabla v \cdot \mathbf{n} = 0$ on $\partial\Omega$ and $v(0) = v_0$.
	We infer by differentiating the equation with respect to time that
	\begin{equation*}
		\left\{
		\begin{aligned}
			&\mathcal{L}[v_t] = h_t, 
			&(t,x) &\in (0,T_{\max}) \times \Omega,\\
			&\nabla v_t \cdot \mathbf{n} = 0,
			&(t,x) &\in (0,T_{\max}) \times \partial\Omega,\\
			&v_t(0) = \frac{1}{\tau}\left(h_0 + \Delta v_0 - v_0\right),
			&x &\in \Omega.  
		\end{aligned}
		\right.
	\end{equation*}
	Then we conclude that $\delta v_t + v$ satisfies 
	\begin{equation}\label{delta v_t + v solves}
		\left\{
		\begin{aligned}
			&\mathcal{L}[\delta v_t + v] = \delta h_t + h=u, 
			&(t,x) &\in (0,T_{\max}) \times \Omega,\\
			&\nabla (\delta v_t + v) \cdot \mathbf{n} = 0,
			&(t,x) &\in (0,T_{\max}) \times \partial\Omega,\\
			&(\delta v_t + v)(0) = \frac{\delta}{\tau}\left(h_0 + \Delta v_0 - v_0\right) + v_0,
			&x &\in \Omega.  
		\end{aligned}
		\right.
	\end{equation}
	Finally, set $\rho$ be the solution to the following heat equation:
	\begin{equation}\label{rho solves}
		\left\{
		\begin{aligned}
			&\mathcal{L}[\rho] = 0, 
			&(t,x) &\in (0,\infty) \times \Omega,\\
			&\nabla \rho \cdot \mathbf{n} = 0,
			&(t,x) &\in (0,\infty) \times \partial\Omega,\\
			&\rho(0) = w_0 - \frac{\delta}{\tau}\left(h_0 + \Delta v_0 - v_0\right) - v_0,
			&x &\in \Omega.
		\end{aligned}
		\right.
	\end{equation}
	The boundedness of $\rho$ follows from  the parabolic maximum principle such that
	\begin{equation}\label{rho upper bound}
		\sup\limits_{t\geq0} \|\rho\|_{L^\infty(\Omega)} \leq \|\rho(0)\|_{L^\infty(\Omega)} \leq C\left(\|u_0\|_{L^\infty(\Omega)} + \|h_0\|_{L^\infty(\Omega)} + \|v_0\|_{W^{2,\infty}(\Omega)}\right) \leq C.
	\end{equation}
	
	After the above preparations,  we follow the ideas in \cite{fujiejiang2021CVPDEcomparison}, \cite{jiangLaurencot2024globalarXiv} and \cite{xiaojiang2024prevention} to derive two key identities which will play the key roles of obtaining the upper bounds for $v$ and $u$.  The first identity is obtained by taking $\mathcal{A}^{-1}$ on both sides of \eqref{Eq of u} as follows.
	\begin{lemma}\label{Lemma: key identity 1}
		For $(t,x) \in (0,T_{\max})\times\Omega$, the following identity holds:
		\begin{equation}\label{key identity 1}
			w_t + \varphi + G(u) = \mathcal{A}^{-1}[\varphi].
		\end{equation}
	\end{lemma}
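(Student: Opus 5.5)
Apply $\mathcal{A}^{-1}$ to both sides of \eqref{Eq of u}, written as $u_t = \Delta\varphi - uf(u)$ with $\varphi = u\gamma(v)$ from \eqref{Def: varphi & G(u)}. The idea is to rewrite the Laplacian as $\Delta\varphi = \varphi - \mathcal{A}[\varphi]$, which turns \eqref{Eq of u} into
\begin{equation*}
u_t = -\mathcal{A}[\varphi] + \varphi - uf(u).
\end{equation*}
Applying $\mathcal{A}^{-1}$ then gives
\begin{equation*}
\mathcal{A}^{-1}[u_t] = -\varphi + \mathcal{A}^{-1}[\varphi] - \mathcal{A}^{-1}[uf(u)].
\end{equation*}
Finally, we identify $\mathcal{A}^{-1}[u_t]$ with $w_t$ by \eqref{Def: w} and $\mathcal{A}^{-1}[uf(u)]$ with $G(u)$. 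Rearranging yields \eqref{key identity 1}.

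Two points need justification. First, $\varphi(t)\in D(\mathcal{A})$ for each $t\in(0,T_{\max})$, so that $\mathcal{A}^{-1}\mathcal{A}[\varphi]=\varphi$. This holds because $u,v\in C^{1,2}((0,T_{\max})\times\overline\Omega)$ and $\gamma\in C^3$, so $\varphi(t)\in C^2(\overline\Omega)\subset H^2(\Omega)$. The Neumann condition follows from \eqref{boundcond}, since $\nabla\varphi\cdot\mathbf n=\gamma(v)\nabla u\cdot\mathbf n+u\gamma'(v)\nabla v\cdot\mathbf n=0$ on $\partial\Omega$.

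Second, $\mathcal{A}^{-1}$ commutes with $\partial_t$, i.e. $\mathcal{A}^{-1}[u_t]=\partial_t\mathcal{A}^{-1}[u]=w_t$. Since $\mathcal{A}^{-1}$ is a bounded linear operator on $L^p(\Omega)$, this reduces to $u\in C^1((0,T_{\max});L^p(\Omega))$. That in turn follows from $u_t\in C^0((0,T_{\max})\times\overline\Omega)$: the difference quotients converge uniformly on compact time intervals, and $\mathcal{A}^{-1}$ passes through the limit.

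With these two facts the identity holds in $L^p(\Omega)$ for each $t$. It then holds pointwise in $(0,T_{\max})\times\Omega$, by the continuity of all terms, using elliptic regularity for $\mathcal{A}^{-1}$ applied to continuous data. The only mildly delicate step is this regularity bookkeeping, namely the time differentiability of $w$ and the membership $\varphi\in D(\mathcal{A})$; the algebra itself is immediate.
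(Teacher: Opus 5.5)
Your proposal is correct and follows essentially the paper's route: the paper obtains \eqref{key identity 1} simply by applying $\mathcal{A}^{-1}$ to \eqref{Eq of u}, implicitly using $\Delta\varphi=\varphi-\mathcal{A}[\varphi]$ exactly as you do. Your checks that $\varphi(t)\in D(\mathcal{A})$ and that $\partial_t$ commutes with $\mathcal{A}^{-1}$ make explicit the regularity bookkeeping the paper omits. For the pointwise version of the latter, it is enough to note that $\|\mathcal{A}^{-1}z\|_{L^\infty(\Omega)}\le\|z\|_{L^\infty(\Omega)}$, so uniform convergence of the difference quotients of $u$ carries over to those of $w$.
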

	Combining the above identity with the definitions of the auxiliary functions $\Psi$, $\psi$, $g$, $\rho$ and equation \eqref{delta v_t + v solves}, we arrive at the second key identity, which serves as a decomposition formula of $v$.
	\begin{lemma}\label{Lemma: key identity 2}
		For $(t,x) \in (0,T_{\max})\times\Omega$, the following identity holds:
		\begin{equation}\label{key identity 2}
			w + \tau\Psi + \tau g = \tau\psi + \delta v_t + v + \rho.
		\end{equation}
	\end{lemma}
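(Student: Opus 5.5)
The plan is to show that the difference of the two sides solves a homogeneous linear problem with zero data, and then invoke uniqueness. Set
\begin{equation*}
	z := w + \tau\Psi + \tau g - \tau\psi - (\delta v_t + v) - \rho,
\end{equation*}
which is defined on $(0,T_{\max})\times\Omega$. Every term satisfies the homogeneous Neumann condition: $w$ and $\psi$ lie in $D(\mathcal{A})$ by construction, and $\Psi$, $g$, $\delta v_t+v$, $\rho$ satisfy it by \eqref{Psi solves}, \eqref{g solves}, \eqref{delta v_t + v solves} and \eqref{rho solves}. Hence $\nabla z\cdot\mathbf{n}=0$ on $\partial\Omega$.

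\textbf{Step 1: compute $\mathcal{L}[z]$.} Since $\mathcal{A}w=u$, we have $-\Delta w + w = u$, so
\begin{equation*}
	\mathcal{L}[w]=\tau w_t + u .
\end{equation*}
By Lemma \ref{Lemma: key identity 1}, $w_t=\mathcal{A}^{-1}[\varphi]-\varphi-G(u)$, and therefore
\begin{equation*}
	\mathcal{L}[w] = \tau\mathcal{A}^{-1}[\varphi] - \tau\varphi - \tau G(u) + u .
\end{equation*}
The remaining terms are read off from their defining problems:
\begin{equation*}
	\mathcal{L}[\tau\Psi]=\tau\varphi,\quad
	\mathcal{L}[\tau g]=\tau G(u),\quad
	\mathcal{L}[\tau\psi]=\tau\mathcal{A}^{-1}[\varphi],\quad
	\mathcal{L}[\delta v_t+v]=u,\quad
	\mathcal{L}[\rho]=0 .
\end{equation*}
Adding with the appropriate signs, all terms cancel and $\mathcal{L}[z]=0$.

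\textbf{Step 2: initial value.} At $t=0$ we have $\Psi(0)=g(0)=\psi(0)=0$ and $w(0)=w_0$. The prescribed values of $(\delta v_t+v)(0)$ and $\rho(0)$ were chosen so that
\begin{equation*}
	(\delta v_t+v)(0)+\rho(0)=w_0 .
\end{equation*}
Hence $z(0)=0$.

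\textbf{Step 3: uniqueness.} Test $\tau z_t-\Delta z+z=0$ with $z$ and use the Neumann condition to get
\begin{equation*}
	\frac{\tau}{2}\frac{d}{dt}\|z\|_{L^2(\Omega)}^2 \le 0 .
\end{equation*}
Together with $z(0)=0$ this gives $z\equiv0$, which is \eqref{key identity 2}.

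The only delicate point, and the one I expect to be the main obstacle, is regularity. The energy argument needs $z\in C([0,T];L^2(\Omega))$ with $z_t$ integrable, and this is not obvious for the term $v_t$ at $t=0$, nor for $w_t=\mathcal{A}^{-1}[u_t]$ near $t=0$.

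To avoid this, I would first run Steps 1--3 on $[t_0,T]$ for small $t_0>0$. There all terms are classical, $\mathcal{L}[z]=0$ holds pointwise, and $\|z(t)\|_{L^2(\Omega)}\le\|z(t_0)\|_{L^2(\Omega)}$ for $t\ge t_0$. It then remains to show $\|z(t_0)\|_{L^2(\Omega)}\to0$ as $t_0\to0$.
\begin{itemize}[leftmargin=*]
	\item For $w,\Psi,g,\psi,\rho$, this follows from the continuity of $u$ and $h$ on $[0,T_{\max})\times\overline{\Omega}$ and from standard continuity of solutions to linear parabolic problems at $t=0$.
	\item For $v_t$, I would use that $v_0\in W^{2,\infty}(\Omega)$ satisfies the compatibility condition and $h\in C^1([0,T_{\max});C^0(\overline{\Omega}))$. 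Then $v_t$ solves $\mathcal{L}[v_t]=h_t$ with $h_t$ continuous and initial value in $L^2(\Omega)$, so $v_t\in C([0,T];L^2(\Omega))$.
\end{itemize}
Alternatively, one can integrate the identities in time to work with $\delta(v-v_0)$ in place of $\delta v_t$, which removes the need for time regularity of $v_t$ at $t=0$ altogether.
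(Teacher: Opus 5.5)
Your proposal is correct and follows the paper's proof. The paper likewise multiplies the first key identity by $\tau$, adds $u=\mathcal{A}[w]$, reads the result as $\mathcal{L}[w+\tau\Psi+\tau g]=\mathcal{L}[\tau\psi+\delta v_t+v]$, and concludes from $\rho(0)=w_0-\delta v_t(0)-v_0$ and uniqueness for the linear Neumann problem. Your energy estimate and your treatment of regularity near $t=0$ (working on $[t_0,T]$, with $v_t\in C([0,T];L^2(\Omega))$ coming from the compatibility of $v_0$) just make explicit the uniqueness step that the paper invokes without detail.
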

	\begin{proof}
		Multiplying \eqref{key identity 1} by $\tau$ and adding $u=\mathcal{A}[w]$ to both sides leads to
		\begin{equation*}
			\tau w_t + \mathcal{A}[w] + \tau\varphi + \tau G(u) = \tau\mathcal{A}^{-1}[\varphi] + u.
		\end{equation*}
		Recalling \eqref{Def: w}, \eqref{Psi solves}, \eqref{psi solves}, \eqref{g solves} and \eqref{delta v_t + v solves}, we can reformulate the above identity as
		\begin{equation*}
			\mathcal{L}[w + \tau\Psi + \tau g] = \mathcal{L}[\tau\psi + \delta v_t + v].
		\end{equation*}
		With $\mathcal{L}[\rho] = 0$ and $\rho(0) = w_0 - \delta v_t(0) - v_0$ from \eqref{rho solves}, it follows that
		\begin{equation*}
			\left\{
			\begin{aligned}
				&\mathcal{L}[w + \tau\Psi + \tau g - \tau\psi - \delta v_t - v - \rho] = 0, 
				&(t,x) &\in (0,T_{\max}) \times \Omega,\\
				&\nabla (w + \tau\Psi + \tau g - \tau\psi - \delta v_t - v - \rho) \cdot \mathbf{n} = 0,
				&(t,x) &\in (0,T_{\max}) \times \partial\Omega,\\
				&(w + \tau\Psi + \tau g - \tau\psi - \delta v_t - v - \rho)(0) = 0,
				&x &\in \Omega.  
			\end{aligned}
			\right.
		\end{equation*}
		Then the identity \eqref{key identity 2} follows from the uniqueness of solutions to the heat equation. This completes the proof.
	\end{proof}

	\section{Global existence for homogeneous case}\label{section 4}
	
	In this section, we establish the global existence of solutions to the homogeneous version of \eqref{Our Problem}, i.e., $f\equiv0$. The proof proceeds in two steps. First, we derive an upper bound for $v$ using the auxiliary functions constructed in the previous section together with the bound on $w$. Then, by a comparison argument along with the second key identity, we obtain an upper bound for $u$, and hence the boundedness of $h$ follows immediately.

	\subsection{Upper bound for $v$}
	
	As a first step, we establish the pointwise upper bound for $w$ based on key identity \eqref{key identity 1}. 
	\begin{lemma} \label{Lemma: w pointwise upper bound}
		Assume that $f\equiv0$.
		Suppose  $\gamma(\cdot)$ satisfies \eqref{A1} and denote $\gamma^*=\gamma(0)$. It holds for all $(t,x) \in [0,T_{\max})\times\Omega$ that
		\begin{equation}\label{pointwise upper bound for w}
			w(t,x) \leq w_0e^{\gamma^*t}.
		\end{equation}
	\end{lemma}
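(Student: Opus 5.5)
We will work from the first key identity \eqref{key identity 1} with $f\equiv 0$, i.e. $G(u)=0$, which gives
\begin{equation*}
w_t+\varphi=\mathcal{A}^{-1}[\varphi] \quad\text{in } (0,T_{\max})\times\Omega .
\end{equation*}
Since $\varphi\ge 0$, we drop the term $\varphi$ on the left and obtain $w_t\le \mathcal{A}^{-1}[\varphi]$. The task is then to bound $\mathcal{A}^{-1}[\varphi]$ pointwise by a multiple of $w$ itself, so that a pointwise Gronwall argument in $t$ (for each fixed $x$) yields \eqref{pointwise upper bound for w}.

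For the comparison we use the monotonicity in \eqref{A1}. Because $\gamma'\le 0$ and $v\ge 0$, we have $0<\gamma(v)\le \gamma(0)=\gamma^*$, and therefore $0\le \varphi=\gamma(v)u\le \gamma^* u$ pointwise. The operator $\mathcal{A}^{-1}$ is order preserving on $L^p(\Omega)$; this is the elliptic comparison principle for $z-\Delta z=F$ with Neumann boundary conditions, already used in Section \ref{section 3} to get $w,\eta\ge0$. Applying it to $\gamma^*u-\varphi\ge0$ gives
\begin{equation*}
\mathcal{A}^{-1}[\varphi]\le \gamma^*\mathcal{A}^{-1}[u]=\gamma^* w .
\end{equation*}
Consequently $w_t\le \gamma^* w$ for every $(t,x)\in(0,T_{\max})\times\Omega$.

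Fixing $x$, we get $\partial_t\bigl(e^{-\gamma^* t}w(t,x)\bigr)\le 0$. Integrating from $0$ to $t$ and using $w(0)=w_0$ from \eqref{w_0,eta_0} then yields $w(t,x)\le w_0(x)e^{\gamma^* t}$.

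The only point needing care is regularity. We need $w_t$ to exist pointwise and be continuous up to $t=0$, so that \eqref{key identity 1} holds pointwise and the time integration is legitimate. For $t>0$ this follows from $u\in C^{1,2}$ and elliptic regularity for $\mathcal{A}^{-1}$. Near $t=0$ we integrate the inequality on $[\varepsilon,t]$ and let $\varepsilon\to0$, using the continuity $u\in C^0([0,T_{\max})\times\overline\Omega)$, which gives $w\in C^0([0,T_{\max})\times\overline\Omega)$. We do not expect any substantive obstacle beyond this.
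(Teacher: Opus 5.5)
Your proposal is correct and matches the paper's proof: you drop $\varphi\ge0$ in $w_t+\varphi=\mathcal{A}^{-1}[\varphi]$, use $\gamma(v)\le\gamma^*$ together with the order-preserving property of $\mathcal{A}^{-1}$ to get $w_t\le\gamma^* w$, and conclude by Gronwall pointwise in $x$. Your extra remark on handling $t\to0$ by integrating on $[\varepsilon,t]$ supplies a continuity detail that the paper leaves implicit.
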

	\begin{proof}
		For the homogeneous case, we rewrite the first key identity \eqref{key identity 1} as 
		\begin{equation}\label{key identity 1 homo-case}
			w_t + \varphi = \mathcal{A}^{-1}[\varphi].
		\end{equation}
		We then infer from the nonnegativity of $\varphi$, together with the monotonicity of $\gamma$ and the elliptic comparison principle, that 
		\begin{equation*}
			w_t \leq \mathcal{A}^{-1}[\varphi] = \mathcal{A}^{-1}[u\gamma(v)] \leq \mathcal{A}^{-1}[\gamma^*u]=\gamma^*\mathcal{A}^{-1}[u]= \gamma^*w.
		\end{equation*}
		By Gronwall’s inequality we obtain \eqref{pointwise upper bound for w}, thereby the proof is complete.
	\end{proof}
	
	Once the boundedness of $w$ is established, the boundedness of $\psi$ follows directly from its definition.
	\begin{corollary}\label{Cor: psi upper bound}
		Suppose that $\gamma(\cdot)$ satisfies \eqref{A1} and $f(\cdot)\equiv0$. Then for any given $T\in(0,T_{\max})$, there exists a constant $\psi^*(T)>0$ depending on $\Omega$, $\gamma$, $T$ and the initial data such that
		\begin{equation}\label{psi upper bound}
			\psi(t,x) \leq \psi^*(T),\quad(t,x) \in [0,T]\times\Omega.
		\end{equation}
	\end{corollary}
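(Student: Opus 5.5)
We bound $\psi$ by comparison for the linear parabolic operator $\mathcal{L}$, using the pointwise bound on $w$ from Lemma \ref{Lemma: w pointwise upper bound}. By \eqref{psi solves}, $\psi$ solves $\mathcal{L}[\psi]=\mathcal{A}^{-1}[\varphi]$ with homogeneous Neumann data and $\psi(0)=0$. The first step is to bound the source term pointwise. Since $\gamma'\le 0$, we have $0\le\varphi=u\gamma(v)\le\gamma^* u$. The elliptic comparison principle for $\mathcal{A}$ then gives
\begin{equation*}
0\le \mathcal{A}^{-1}[\varphi]\le \gamma^*\mathcal{A}^{-1}[u]=\gamma^* w\le \gamma^* w_0 e^{\gamma^* t}\le \gamma^*\|u_0\|_{L^\infty(\Omega)}e^{\gamma^* T}
\end{equation*}
on $[0,T]\times\Omega$. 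Here we used $\|w_0\|_{L^\infty(\Omega)}\le\|u_0\|_{L^\infty(\Omega)}$.

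The second step is the comparison itself. Write $K(T):=\gamma^*\|u_0\|_{L^\infty(\Omega)}e^{\gamma^*T}$ and take the spatially constant supersolution $\overline{\psi}\equiv K(T)$. It satisfies $\mathcal{L}[\overline\psi]=K(T)\ge \mathcal{A}^{-1}[\varphi]$, has zero normal derivative, and satisfies $\overline\psi(0)=K(T)\ge 0=\psi(0)$. The parabolic comparison principle for $\tau\partial_t-\Delta+1$ with Neumann boundary conditions therefore gives $\psi\le K(T)$ on $[0,T]\times\Omega$. So we may take $\psi^*(T):=\gamma^*\|u_0\|_{L^\infty(\Omega)}e^{\gamma^*T}$, which depends only on $\gamma$, $T$ and the initial data.

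An alternative route uses $\psi=\mathcal{A}^{-1}[\Psi]$: bound $\Psi$ first and then apply elliptic comparison. This is worse, since $\varphi$ itself is not yet known to be bounded. Working directly with $\psi$, whose source is already controlled by $w$, avoids that problem.

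The only delicate point is regularity. The comparison principle requires $\psi$ to be regular enough on $[0,T]$, for instance $\psi\in C^{1,2}((0,T]\times\overline\Omega)\cap C([0,T]\times\overline\Omega)$. It also requires the source $\mathcal{A}^{-1}[\varphi]$ to be bounded on $[0,T]\times\Omega$, and this holds because $u$ is continuous on $[0,T]\times\overline\Omega$ for $T<T_{\max}$. Both follow from standard parabolic theory, since $\mathcal{A}^{-1}[\varphi]$ is smooth in $x$ and continuous in $t$. This should be routine.
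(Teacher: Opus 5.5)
Your proposal is correct and follows essentially the same route as the paper. You bound the source $\mathcal{A}^{-1}[\varphi]\le\gamma^* w$ by elliptic comparison, insert $w\le w_0e^{\gamma^*t}$ from Lemma \ref{Lemma: w pointwise upper bound}, and apply parabolic comparison to \eqref{psi solves}; replacing $w_0$ by $\|u_0\|_{L^\infty(\Omega)}$ to get a genuine constant supersolution is a slightly cleaner version of the paper's $\gamma^* w^*(T)$, which is written as a function of $x$.
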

	\begin{proof}
		From \eqref{pointwise upper bound for w} we may derive that $w\leq w_0e^{\gamma^*T}:=w^*(T)$ for $(t,x)\in [0,T]\times\Omega$. Then by the elliptic comparison principle we have
		\begin{equation*}
			\mathcal{A}^{-1}[\varphi] = \mathcal{A}^{-1}[\gamma(v)u] \leq \gamma^*\mathcal{A}^{-1}[u] \leq \gamma^*w^*(T).
		\end{equation*}
		Applying the parabolic comparison principle to \eqref{psi solves}, we can deduce that $0\leq\psi \leq \gamma^*w^*(T)$. This completes the proof.
	\end{proof}
	
	Given the upper bound on $w$, we then use the function $\eta$ as an intermediate link to derive an upper bound for $v$.
	
	\begin{proposition}\label{Prop: v upper bound}
		Suppose that $\gamma(\cdot)$ satisfies \eqref{A1} and $f(\cdot)\equiv0$. Then for any given $T\in(0,T_{\max})$, there exists a constant $v^*(T)>0$ depending on $\Omega$, $\delta$, $\tau$, $\gamma$, $T$ and the initial data such that 
		\begin{equation}\label{v: upper bound}
			v(t,x)\leq v^*(T),  \quad (t,x)\in [0,T]\times\Omega.
		\end{equation}
	\end{proposition}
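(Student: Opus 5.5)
The plan is to use $\eta=\mathcal{A}^{-1}[h]$ as an intermediate function, bounded through the bound on $w$ from Lemma \ref{Lemma: w pointwise upper bound}, and then to compare $v$ with $\eta$ through a linear parabolic problem. The key identity \eqref{key identity 2} is not needed here. Its left-hand side contains $\tau\Psi$, and no bound on $\Psi$ is available at this stage, so I avoid it.

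\emph{Step 1: bound for $\eta$.} Apply $\mathcal{A}^{-1}$ to \eqref{Eq of h}. Since $h\in C^1([0,T_{\max});C^0(\overline{\Omega}))$ and $\mathcal{A}^{-1}$ is linear, bounded and time-independent, we get
\begin{equation*}
\delta\eta_t+\eta=w \quad\text{in } (0,T_{\max})\times\Omega .
\end{equation*}
By Lemma \ref{Lemma: w pointwise upper bound}, $w\le \|u_0\|_{L^\infty(\Omega)}e^{\gamma^*T}=:w^*(T)$ on $[0,T]$. This is a pointwise linear ODE in $t$, so
\begin{equation*}
0\le \eta\le \max\{\|h_0\|_{L^\infty(\Omega)},\,w^*(T)\}=:\eta^*(T)\quad\text{on } [0,T]\times\Omega .
\end{equation*}

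\emph{Step 2: comparison for $z:=v-\eta$.} We have $h=\mathcal{A}[\eta]=\eta-\Delta\eta$ and $\partial_{\mathbf n}\eta=0$. Then \eqref{Eq of v} together with Step 1 gives
\begin{equation*}
\tau z_t-\Delta z+z=h-(\eta-\Delta\eta)-\tau\eta_t=-\tau\eta_t=\frac{\tau}{\delta}(\eta-w)\le \frac{\tau}{\delta}\,\eta^*(T),
\end{equation*}
where the last inequality uses $w\ge0$. The function $z$ satisfies homogeneous Neumann conditions, and $z(0)=v_0-\eta_0\le\|v_0\|_{L^\infty(\Omega)}$. The constant $\max\{\|v_0\|_{L^\infty(\Omega)},\tfrac{\tau}{\delta}\eta^*(T)\}$ is a supersolution, so the parabolic comparison principle gives $z\le \max\{\|v_0\|_{L^\infty(\Omega)},\tfrac{\tau}{\delta}\eta^*(T)\}$. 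Hence
\begin{equation*}
v^*(T):=\eta^*(T)+\max\Big\{\|v_0\|_{L^\infty(\Omega)},\tfrac{\tau}{\delta}\eta^*(T)\Big\}
\end{equation*}
is the required bound \eqref{v: upper bound}. It depends only on $\Omega$, $\delta$, $\tau$, $\gamma$ (through $\gamma^*$), $T$ and the initial data.

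\emph{Main obstacle.} The only delicate point is regularity for the comparison step. The function $\eta(t)\in D(\mathcal{A})$ need not be $C^{1,2}$ in the classical sense, because $h$ is only continuous in $x$. I would therefore run the comparison in its weak or $L^2$ form: test the equation for $(z-K)_+$ against $(z-K)_+$ with $K$ the constant above, which gives $\frac{d}{dt}\int_\Omega (z-K)_+^2\le0$. This only requires $z\in C^1([0,T];L^2(\Omega))\cap C([0,T];H^2(\Omega))$, and that follows from the regularity of $v$ and $h$ together with elliptic regularity for $\mathcal{A}^{-1}$.
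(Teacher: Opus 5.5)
Your proposal is correct and matches the paper's proof: you bound $\eta$ through the pointwise ODE $\delta\eta_t+\eta=w$ using the $w$-bound, then apply the parabolic comparison principle to $v-\eta$, which solves $\mathcal{L}[v-\eta]=\frac{\tau}{\delta}(\eta-w)$. The differences are cosmetic. You use a one-sided constant supersolution, exploiting $w\ge0$, where the paper uses a two-sided maximum-principle bound, and you add a weak-form justification of the comparison step.
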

	\begin{proof} On the one hand, from \eqref{pointwise upper bound for w} we derive that $w\leq w_0e^{\gamma^*T}:=w^*(T)$ for $(t,x)\in [0,T]\times\Omega$.
		According to \eqref{Def: w} and \eqref{Def: eta}, we may apply $\mathcal{A}^{-1}$ to both sides of \eqref{Eq of h} to obtain that 
		\begin{equation*}
			\delta\eta_t + \eta = w, \quad (t,x) \in (0,T_{\max})\times\Omega. 
		\end{equation*} 
		Thus,  for $(t,x)\in [0,T]\times\Omega$, it holds that
		\begin{equation*}
			\begin{aligned}
				0\leq\eta(t,x) 
				&=e^{-\frac{1}{\delta}t}\eta_0 + \frac{1}{\delta}\int_{0}^{t}e^{-\frac{t-s}{\delta}}w(s,x)\;\rd s\\
				&\leq e^{-\frac{1}{\delta}t} \|\eta_0\|_{L^\infty(\Omega)} + \frac{w^*(T)}{\delta}\int_{0}^{t}e^{-\frac{t-s}{\delta}}\;\rd s\\
				&\leq e^{-\frac{1}{\delta}t} \|\eta_0\|_{L^\infty(\Omega)} + (1-e^{-\frac{t}{\delta}})w^*(T)\\
				&\leq \max\left\{\|\eta_0\|_{L^\infty(\Omega)}, w^*(T)\right\} \triangleq \eta^*(T).
			\end{aligned}     
		\end{equation*}
		
		On the other hand, we  infer from \eqref{Eq of v} and \eqref{Def: eta} that $\mathcal{L}[v]=h=\mathcal{A}[\eta]$, which gives
		\begin{equation*}
			\mathcal{L}[v-\eta]=-\tau \eta_t=\frac{\tau}{\delta}(\eta-w), \quad (t,x) \in (0,T_{\max})\times\Omega.
		\end{equation*}
		It follows from the parabolic maximum principle that for $(t,x)\in [0,T]\times\Omega$,
		\begin{equation*}
			\begin{aligned}
				\|v-\eta\|_{L^\infty(\Omega)} \leq \|v_0-\eta_0\|_{L^\infty(\Omega)} + \frac{\tau}{\delta}\|\eta-w\|_{L^\infty(\Omega)}\leq \|v_0-\eta_0\|_{L^\infty(\Omega)} +\frac{\tau}{\delta}\left(\eta^*(T)+w^*(T)\right),
			\end{aligned}   
		\end{equation*}
		which then implies desired upper bound for $v$.
	\end{proof}
	\begin{remark} 
		Thanks to the indirect signal production structure, we can establish the upper bound for $v$ without requiring the asymptotic smallness condition $\limsup_{s\to\infty}\gamma(s)<1/\tau$, which is necessary in the direct signal production case \cite{fujiejiang2021CVPDEcomparison}.
	\end{remark}

	\subsection{Upper bound for $u$}
	
	We are now ready to establish an upper bound for $u$ through a direct comparison argument. To this end, we derive the equation for $\varphi=\gamma(v)u$, which takes a dual form to that for $u$, and is particularly amenable to comparison principles.

	Fix $T\in(0,T_{\max})$. Based on the previously obtained bound for $v$ and the properties of $\gamma$, it follows that for all $v\in[0,v^*(T)]$:
	\begin{equation}\label{gamma's bounds}
		0 < \gamma_*(T) \leq \gamma(v) \leq \gamma^* < \infty, \quad \gamma_*(T) \triangleq \gamma\left(v^*(T)\right) > 0,
	\end{equation}
	where $v^*(T)>0$ denotes the upper bound of $v$. As a result,
	\begin{equation}\label{u leq variphi leq u}
		\gamma_*(T)u \leq \varphi=\gamma(v)u \leq \gamma^*u, \quad (t,x)\in[0,T]\times\Omega,
	\end{equation}
	and hence
	\begin{equation}\label{u leq variphi}
		u \leq \frac{\varphi}{\gamma_*(T)}, \quad (t,x)\in[0,T]\times\Omega.
	\end{equation}
	Thus, the problem of bounding $u$ reduces to that of establishing an upper bound for $\varphi$.
	
	\begin{proposition}\label{Prop: u upper bound}
		Suppose that $\gamma(\cdot)$ satisfies \eqref{A1} and $f(\cdot)\equiv 0$. Then for $(t,x)\in [0,T]\times\Omega$, there exists a constant $C(T)>0$ depending on $\Omega$, $\delta$, $\tau$, $\gamma$, $T$ and the initial data such that 
		\begin{equation}\label{u: upper bound}
			u(t,x)\leq C(T).
		\end{equation}
	\end{proposition}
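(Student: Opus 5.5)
The plan is to bound $\varphi=\gamma(v)u$ from above by a parabolic comparison argument and then use \eqref{u leq variphi}. Fix $T\in(0,T_{\max})$. First I will derive the equation for $\varphi$. Since $u_t=\Delta\varphi$ when $f\equiv0$,
\begin{equation*}
\varphi_t=\gamma(v)u_t+\gamma'(v)v_t u=\gamma(v)\Delta\varphi+\frac{\gamma'(v)}{\gamma(v)}\,v_t\,\varphi .
\end{equation*}
On $\partial\Omega$ we have $\nabla\varphi\cdot\mathbf{n}=\gamma'(v)u\,\nabla v\cdot\mathbf{n}+\gamma(v)\nabla u\cdot\mathbf{n}=0$ by \eqref{boundcond}, so $\varphi$ satisfies a homogeneous Neumann problem. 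This is the ``dual'' structure that makes comparison available.

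The term to control is the zeroth-order coefficient. Because $\gamma'\le0$ and $\varphi\ge0$, we have $\frac{\gamma'(v)}{\gamma(v)}v_t\varphi\le\frac{|\gamma'(v)|}{\gamma(v)}(v_t)_-\varphi$. So only a \emph{lower} bound on $v_t$ is needed, and this is the key step. Here I will use the decomposition \eqref{key identity 2} with $g\equiv0$ (since $f\equiv0$):
\begin{equation*}
-\delta v_t = v+\rho+\tau\psi-w-\tau\Psi\le v^*(T)+\sup_{t\ge0}\|\rho\|_{L^\infty(\Omega)}+\tau\psi^*(T).
\end{equation*}
This uses $w\ge0$, $\Psi\ge0$, Proposition \ref{Prop: v upper bound}, Corollary \ref{Cor: psi upper bound} and \eqref{rho upper bound}. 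Hence $(v_t)_-\le C_1(T)$ on $[0,T]\times\Omega$. I will also use $0\le v\le v^*(T)$, where $v\ge0$ follows from $h\ge0$ and comparison. Together with \eqref{gamma's bounds} and $\gamma\in C^3$, this gives
\begin{equation*}
\frac{|\gamma'(v)|}{\gamma(v)}\le\frac{\max_{[0,v^*(T)]}|\gamma'|}{\gamma_*(T)}=:C_2(T).
\end{equation*}
Thus $\varphi_t\le\gamma(v)\Delta\varphi+K(T)\varphi$ with $K(T)=C_1(T)C_2(T)$.

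Next I compare with the spatially constant supersolution $\bar\varphi(t)=\|\gamma(v_0)u_0\|_{L^\infty(\Omega)}e^{K(T)t}$. The equation for $\varphi$ is uniformly parabolic on $[0,T]$ because $\gamma(v)\ge\gamma_*(T)>0$, and its coefficients are continuous there. The maximum principle then gives $\varphi\le\bar\varphi$ on $[0,T]\times\overline\Omega$. Near $t=0$ I will apply the comparison on $[\epsilon,T]$ and let $\epsilon\to0$, using the continuity of $u$ and $v$ up to $t=0$. Finally \eqref{u leq variphi} yields
\begin{equation*}
u\le\frac{\|\gamma(v_0)u_0\|_{L^\infty(\Omega)}\,e^{K(T)T}}{\gamma_*(T)}=:C(T).
\end{equation*}

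The main obstacle is the lower bound on $v_t$. This is exactly why the identity \eqref{key identity 2} is arranged so that the terms that are not yet controlled, namely $w$ and $\Psi$, appear with the favourable sign. Everything after that is a routine comparison argument.
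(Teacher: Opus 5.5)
Your proposal is correct and follows essentially the same route as the paper. You derive $\varphi_t=\gamma(v)\Delta\varphi-\frac{|\gamma'(v)|}{\gamma(v)}\varphi v_t$ and use the decomposition \eqref{key identity 2} to discard the favourable terms $w+\tau\Psi\ge0$, so that $v_t$ is bounded below through $\tau\psi+v+\rho$. You then close with the parabolic comparison principle and \eqref{u leq variphi}; your lower bound on $v_t$ and the explicit supersolution $\|\gamma(v_0)u_0\|_{L^\infty(\Omega)}e^{K(T)t}$ only spell out what the paper's proof leaves implicit.
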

	\begin{proof}
		For the case $f\equiv0$, we rewrite the second key identity as
		\begin{equation}\label{key identity 2 homo-case}
			w + \tau\Psi = \tau\psi + \delta v_t + v + \rho,
		\end{equation}
		where the auxiliary functions $w$, $\varphi$, $\Psi$, $\psi$, $\rho$ are given by \eqref{Def: w}, \eqref{Def: varphi & G(u)}, \eqref{Psi solves}, \eqref{psi solves} and \eqref{rho solves}.
		From  the equation for $u$ and the monotonicity of $\gamma$, we have 
		\begin{equation*}
			\varphi_t
			= \gamma(v)u_t + \gamma'(v)v_tu
			= \gamma(v)\Delta\varphi - \frac{|\gamma'(v)|}{\gamma(v)}\varphi v_t.
		\end{equation*}
		Then with the aid of \eqref{key identity 2 homo-case}, we obtain
		\begin{equation*}
			\begin{aligned}
				\varphi_t 
				& = \gamma(v)\Delta\varphi - \frac{1}{\delta}\frac{|\gamma'(v)|}{\gamma(v)}\left(w + \tau \Psi\right)\varphi + \frac{1}{\delta}\frac{|\gamma'(v)|}{\gamma(v)}\left(\tau \psi + v + \rho\right)\varphi\\
				& \leq \gamma(v)\Delta\varphi + \frac{1}{\delta}\frac{|\gamma'(v)|}{\gamma(v)}\left(\tau \psi + v + \rho\right)\varphi.
			\end{aligned}
		\end{equation*}
		where we have used the fact that $w\geq 0$ and $\Psi\geq 0$. 
		
		Since $v$ is bounded and $\gamma(\cdot)\in C^3\left([0,\infty)\right)$, the quotient $\frac{|\gamma'(v)|}{\gamma(v)}$ remains bounded. 
		Combining with \eqref{rho upper bound}, \eqref{psi upper bound} and \eqref{v: upper bound}, we arrive at
		\begin{equation*}
			\varphi_t \leq \gamma(v)\Delta\varphi + C(T)\varphi.
		\end{equation*}
		Note that $\nabla\varphi\cdot\mathbf{n}=0$ on $\partial\Omega$.    The parabolic comparison principle then yields $\varphi\leq C(T)$, which implies $u\leq \frac{\varphi}{\gamma_*(T)}\leq C(T)$. The proof is thus completed.
	\end{proof}
	
	\noindent \textbf{Proof of Theorem \ref{Th.1.1}.} 
	Under the assumptions of Theorem \ref{Th.1.1}, given any $T\in(0,T_{\max})$, the boundedness of $v$ on $[0,T]$ has been established in Proposition \ref{Prop: v upper bound}. The corresponding boundedness of $u$ then follows from Proposition \ref{Prop: u upper bound}. In view of \eqref{Eq of h}, the estimate
	\begin{equation}\label{h L^infty}
		\begin{aligned}
			h &\leq e^{-\frac{t}{\delta}}\|h_0\|_{L^\infty(\Omega)} + \frac{1}{\delta}\int_0^t e^{-\frac{t-s}{\delta}}\|u\|_{L^\infty(\Omega)} \;\rd s\\
			&\leq e^{-\frac{t}{\delta}}\|h_0\|_{L^\infty(\Omega)} + C(T)(1-e^{-\frac{t}{\delta}})\\
			&\leq C(T),
		\end{aligned}
	\end{equation}
	holds for all $(t,x)\in[0,T]\times\Omega$. And recalling Theorem \ref{Th.2.1}, we deduce that $T_{\max} = \infty$ and thus Theorem \ref{Th.1.1} is proved.\qed
	
	\section{Uniform-in-time boundedness for homogeneous case}\label{section 5}
	In this section, we further assume that $\gamma$ admits a uniform positive lower bound $\gamma_*$, i.e.,
	\begin{equation*}
		0<\gamma_*\leq\gamma(s)\leq \gamma^*=\gamma(0),\quad\forall \,s \in[0,\infty).
	\end{equation*}
	Under the above condition, we establish the uniform-in-time boundedness of solutions to the homogeneous case of \eqref{Our Problem}.
	
	\subsection{Time-independent upper bound for $v$}
	The goal of this section is to establish a uniform-in-time upper bound for $v$, which relies on a corresponding bound for $w$. To this end, we refine the approach developed in \cite{xiaojiang2023JDEglobal} for deriving such a bound for $w$.
	
	\begin{lemma}\label{Lemma: w uniform upper bound}
		Assume that $\gamma(\cdot)$ satisfies \eqref{A1}, \eqref{A2} and that $f\equiv0$. Then we have 
		\begin{equation}
			w(t,x)\leq w^*,\quad \psi(t,x)\leq\psi^*, \quad (t,x)\in[0,\infty)\times\Omega,
		\end{equation}
		with some positive constants $w^*$, $\psi^*$ depending only on $\Omega$, $\gamma$ and the initial data.
	\end{lemma}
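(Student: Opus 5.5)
The plan is to upgrade the pointwise Gronwall argument of Lemma \ref{Lemma: w pointwise upper bound}, which only gives growth like $e^{\gamma^* t}$, to a time-independent bound. The lower bound \eqref{A2} supplies the missing dissipation. Uniform bounds for $\psi$ should then follow, as in Corollary \ref{Cor: psi upper bound}, from $\mathcal{L}[\psi]=\mathcal{A}^{-1}[\varphi]\le \gamma^* w\le \gamma^* w^*$ and the parabolic comparison principle. So the whole task is a uniform bound for $w$.

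\emph{Step 1: an energy estimate.} I would test the homogeneous identity \eqref{key identity 1 homo-case}, $w_t+\varphi=\mathcal{A}^{-1}[\varphi]$, against $u=\mathcal{A}[w]$. Since $\mathcal{A}^{-1}$ is self-adjoint,
\begin{equation*}
\frac12\frac{d}{dt}\left(\|w\|_{L^2}^2+\|\nabla w\|_{L^2}^2\right)+\int_\Omega\gamma(v)u^2\,\rd x=\int_\Omega \gamma(v)u\,w\,\rd x\le \gamma^*\int_\Omega uw\,\rd x .
\end{equation*}
The left dissipation is at least $\gamma_*\|u\|_{L^2}^2$. For the right side I would use $\int_\Omega uw\,\rd x=\|w\|_{H^1}^2$, elliptic regularity $\|w\|_{H^2}\le C\|u\|_{L^2}$, and the Gagliardo--Nirenberg interpolation of $\|w\|_{H^1}$ between $\|w\|_{H^2}$ and $\|w\|_{L^1}\le m$, where $m$ is conserved by Lemma \ref{Lemma:u,v,h L1-estimates}. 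This gives $\gamma^*\|w\|_{H^1}^2\le \frac{\gamma_*}{2}\|u\|_{L^2}^2+C$, in any dimension. Hence $y:=\|w\|_{H^1}^2$ satisfies $y'+cy+\gamma_*\|u\|_{L^2}^2\le C$. This yields $\sup_t\|w\|_{H^1}<\infty$ and $\sup_t\int_t^{t+1}\|u\|_{L^2}^2\,\rd s<\infty$.

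\emph{Step 2: bootstrap to $L^\infty$.} I would next test \eqref{key identity 1 homo-case} against $u^{p-1}$, or equivalently work with the $L^p$ norms of $\varphi$ via the dual equation $\varphi_t=\gamma(v)\Delta\varphi-\frac{|\gamma'(v)|}{\gamma(v)}\varphi v_t$. The aim is to raise the time-averaged integrability of $u$ step by step, using $\gamma_*\le\gamma\le\gamma^*$ and the same interpolation against the conserved mass. Once $\sup_t\int_t^{t+1}\|u\|_{L^q}\,\rd s<\infty$ for some $q>N/2$, elliptic regularity gives $\mathcal{A}^{-1}[\varphi]\in L^\infty$ on such windows with uniform constants. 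Then from $w_t=\mathcal{A}^{-1}[\varphi]-\varphi\le \mathcal{A}^{-1}[\varphi]$, together with the fact that $w\le C\|u\|_{L^q}$ pointwise on average, a uniform Gronwall argument (Lemma \ref{Lemma: uniform Gronwall inequality}) over unit windows should close the $L^\infty$ bound $w\le w^*$.

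A more direct alternative for Step 2 is a pointwise comparison. I would write $w_t-\gamma(v)\Delta w+\gamma(v)w=\mathcal{A}^{-1}[\varphi]$ and look for a stationary supersolution built from $\mathcal{A}^{-1}$ of a multiple of $1$ plus the $H^1$ information. I would try this first because it would avoid the $L^p$ machinery, though I am not confident it closes.

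\emph{Main obstacle.} The hard step is Step 2, the passage from the $L^2$-type energy bound to an $L^\infty$ bound on $w$ uniformly in time and in all dimensions $N$. Step 1 only controls $u$ in $L^2$ on average. The inequality $w_t\le\mathcal{A}^{-1}[\varphi]$ needs $u$ in $L^q$ with $q>N/2$, uniformly in time, to bound its right-hand side. I expect the key to be the factor $\gamma(v)\ge\gamma_*$ in front of the diffusion in the $\varphi$-equation, combined with the damping $-\varphi$ on the left of \eqref{key identity 1 homo-case}, to make the $L^p$ iteration close with constants independent of $t$.
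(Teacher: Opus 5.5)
Your Step 1 is sound: interpolating $\|w\|_{H^1(\Omega)}$ between $\|w\|_{H^2(\Omega)}\le C\|u\|_{L^2(\Omega)}$ and the conserved $\|w\|_{L^1(\Omega)}=m$ works in every dimension. Combined with your closing argument it already proves the lemma for $N\le 3$. There $w_t\le\gamma^*w$ pointwise gives $w(t+1,x)\le e^{\gamma^*}\int_t^{t+1}w(s,x)\,\rd s$, and $\|w\|_{L^\infty(\Omega)}\le C\|u\|_{L^2(\Omega)}$.

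For $N\ge4$, however, Step 2 is a genuine gap, and the mechanisms you propose cannot fill it.
\begin{itemize}
\item The $\varphi$-equation contains $-\frac{|\gamma'(v)|}{\gamma(v)}\varphi v_t$. Controlling $-v_t$ requires \eqref{key identity 2 homo-case} together with upper bounds on $\psi$ and $v$, and bounding $|\gamma'(v)|/\gamma(v)$ also requires $v\le v^*$. All of these are consequences of the bound on $w$ you are trying to prove, so this route is circular.
\item Testing \eqref{key identity 1 homo-case} with $u^{p-1}$ does not produce a differential inequality, because $\int_\Omega w_tu^{p-1}\,\rd x$ is a time derivative only for $p=2$.
\item The pointwise-comparison alternative also fails as stated. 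After $\mathcal{A}^{-1}[\varphi]\le\gamma^*w$ you are left with $w_t-\gamma(v)\Delta w\le(\gamma^*-\gamma(v))w$, whose zero-order coefficient is a growth term. Integrating over $\Omega$ shows that a time-independent Neumann supersolution would have to be constant, which does not work. Some nonlocal input, namely the conserved mass, has to enter.
\end{itemize}

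The missing idea is to iterate on $w$ itself, which satisfies a closed, constant-coefficient inequality. Since $\varphi=\gamma(v)u\ge\gamma_*u=\gamma_*\mathcal{A}[w]$ and $\mathcal{A}^{-1}[\varphi]\le\gamma^*w$, identity \eqref{key identity 1 homo-case} yields $w_t+\gamma_*\mathcal{A}[w]\le\gamma^*w$.

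The paper compares $w$ with the solution $z$ of $z_t+\gamma_*\mathcal{A}[z]=\gamma^*w$, $z(0)=w_0$. It writes $z$ via Duhamel's formula with $e^{-\gamma_*\mathcal{A}t}$, which carries the decay factor $e^{-\gamma_*t}$. It then uses $\|w\|_{L^1(\Omega)}=\|u_0\|_{L^1(\Omega)}$ and $L^p$--$L^q$ smoothing to climb $L^1\to L^{N/(N-1)}\to\cdots\to L^N\to L^\infty$. Each step costs only a kernel of the form $(t-s)^{-1/2}e^{-\gamma_*(t-s)}$ (up to constants), which is integrable on $(0,\infty)$. So every bound is uniform in time, and no energy estimate for $u$ is needed.

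An alternative would also work: test $w_t+\gamma_*\mathcal{A}[w]\le\gamma^*w$ with $w^{p-1}$, which produces the dissipation $\gamma_*(p-1)\int_\Omega w^{p-2}|\nabla w|^2\,\rd x$, and run an Alikakos--Moser iteration from the $L^1$ bound. Your deduction of the bound for $\psi$ from that of $w$ is then exactly the paper's.
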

	\begin{proof}
		From the first key identity \eqref{key identity 1 homo-case} and definition of $w$, we have
		\begin{equation*}
			w_t + \gamma_*\mathcal{A}[w]=w_t+\gamma_*u\leq w_t+\gamma(v)u
			= \mathcal{A}^{-1}[\gamma(v)u] \leq \gamma^*w, \quad (t,x) \in (0,\infty)\times\Omega.
		\end{equation*}
		Consider the linear problem
		\begin{equation*}
			\left\{
			\begin{aligned}
				&z_t + \gamma_*\mathcal{A}[z] = \gamma^*w, 
				&(x,t) &\in \Omega\times(0,\infty),\\
				&\nabla z \cdot \mathbf{n} = 0, 
				&(x,t) &\in \partial\Omega\times(0,\infty),\\
				&z(x, 0) = w_0(x), 
				&x &\in \Omega,
			\end{aligned}
			\right.
		\end{equation*}
		By the comparison principle, we have
		\begin{equation}\label{w-z comparison}
			0 \leq w \leq z.
		\end{equation}
		The solution to the above equation can be represented as
		\begin{equation*}
			z(t) = e^{-\gamma_* \mathcal{A} t} w_0 + \gamma^* \int_0^t e^{-\gamma_* \mathcal{A}(t - s)} w(s) \;\rd s.
		\end{equation*}
		From the definition of $w$ and Lemma \ref{Lemma:u,v,h L1-estimates}, we obtain 
		\begin{equation*}
			\|w\|_{L^1(\Omega)} = \|u\|_{L^1(\Omega)} = \|u_0\|_{L^1(\Omega)}, \quad t\in[0,\infty).
		\end{equation*}
		Then for $p_1=\frac{N}{N-1}$, it follows
		\begin{equation*}
			\begin{aligned}
				\|z\|_{L^{p_1}(\Omega)}
				&\leq \|w_0\|_{L^{p_1}(\Omega)} + \gamma^* \int_0^t \|e^{-\gamma_* \mathcal{A}(t - s)} w(s)\|_{L^{p_1}(\Omega)} \;\rd s\\
				&\leq C\|w_0\|_{L^{\infty}(\Omega)} + C\gamma^* \int_0^t e^{-\gamma_* (t - s)}(t-s)^{-\frac{N}{2}(1-\frac{1}{p_1})}\|w\|_{L^1(\Omega)} \;\rd s\\
				&\leq C,
			\end{aligned}
		\end{equation*}
		since $\frac{N}{2}(1-\frac{1}{p_1})=\frac12$ and
		$$\int_0^\infty s^{-\frac12}e^{-\gamma_*s} \;\rd s < \infty.$$
		Owing to \eqref{w-z comparison}, we also have 
		\begin{equation*}
			\|w\|_{L^{p_1}(\Omega)} \leq \|z\|_{L^{p_1}(\Omega)} \leq C.
		\end{equation*}
		Now we may use an iteration argument to fix a sequence of increasing numbers $p_j=\frac{N}{N-j}$ ($j=1,2,\dots, N-1$) satisfying
		\begin{equation*}
			\frac{1}{p_j} - \frac{1}{p_{j+1}} =\frac{1}{N} <\frac{2}{N},
		\end{equation*} such that $\|w\|_{L^{p_j}(\Omega)}\leq C(j)$, $j=1,2,\dots,N-1$. Consequently, we have $\|w\|_{L^N(\Omega)}\leq C$.
		Then we can deduce that
		\begin{equation*}
			\begin{aligned}
				\|z\|_{L^{\infty}(\Omega)}
				&\leq \|w_0\|_{L^{\infty}(\Omega)} + \gamma^* \int_0^t \|e^{-\gamma_* \mathcal{A}(t - s)} w(s)\|_{L^{\infty}(\Omega)} \;\rd s\\
				&\leq \|w_0\|_{L^{\infty}(\Omega)} + C\gamma^* \int_0^t e^{-\gamma_* (t - s)}(t-s)^{-\frac{1}{2}}\|w\|_{L^{N}(\Omega)} \;\rd s\\
				&\leq C.
			\end{aligned}
		\end{equation*}
		Thus $0\leq w\leq z\leq C$. In the same manner as done in Corollary \ref{Cor: psi upper bound}, we can further derive  the time-independent upper bound for $\psi$. This completes the proof.
	\end{proof}
	
	As argued in Section \ref{section 4}, we may further derive the uniform-in-time upper bound of $v$.
	
	\begin{proposition}\label{Prop: uniform upper bound for v in homo-case}
		Suppose that $\gamma(\cdot)$ satisfies \eqref{A1} and \eqref{A2} and that $f\equiv0$. Then there exists a constant $v^*>0$ depending on $\Omega$, $\gamma$, $\delta$, $\tau$ and the initial data such that 
		\begin{equation}
			v(t,x)\leq v^*, \quad (t,x)\in[0,\infty)\times\Omega.
		\end{equation}
	\end{proposition}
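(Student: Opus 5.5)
The plan is to repeat the proof of Proposition \ref{Prop: v upper bound}, with the time-dependent bound $w^*(T)$ replaced by the uniform bound $w^*$ from Lemma \ref{Lemma: w uniform upper bound}. The function $\eta=\mathcal{A}^{-1}[h]$ again serves as the intermediate link.

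First, applying $\mathcal{A}^{-1}$ to \eqref{Eq of h} gives $\delta\eta_t+\eta=w$. The variation of constants formula then yields, for all $(t,x)\in[0,\infty)\times\Omega$,
\begin{equation*}
0\le \eta(t,x)\le e^{-\frac{t}{\delta}}\|\eta_0\|_{L^\infty(\Omega)}+(1-e^{-\frac{t}{\delta}})w^*\le \max\{\|\eta_0\|_{L^\infty(\Omega)},w^*\}=:\eta^*,
\end{equation*}
and this bound is independent of time.

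Second, since $\mathcal{L}[v]=h=\mathcal{A}[\eta]$, we have
\begin{equation*}
\mathcal{L}[v-\eta]=-\tau\eta_t=\frac{\tau}{\delta}(\eta-w).
\end{equation*}
The right-hand side is bounded in $L^\infty$ by $\frac{\tau}{\delta}(\eta^*+w^*)$ uniformly in time.

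The only point requiring care is that the maximum principle estimate used in Section \ref{section 4} must not grow with $t$. Here the zero-order term in $\mathcal{L}$ supplies damping. Comparing $\pm(v-\eta)$ with the spatially constant supersolution
\begin{equation*}
\bar z(t)=e^{-t/\tau}\|v_0-\eta_0\|_{L^\infty(\Omega)}+(1-e^{-t/\tau})\frac{\tau}{\delta}(\eta^*+w^*),
\end{equation*}
which solves $\tau \bar z'+\bar z=\frac{\tau}{\delta}(\eta^*+w^*)$ and satisfies the Neumann condition trivially, gives
\begin{equation*}
\|v-\eta\|_{L^\infty(\Omega)}\le \|v_0-\eta_0\|_{L^\infty(\Omega)}+\frac{\tau}{\delta}(\eta^*+w^*)\quad\text{for all } t\ge0.
\end{equation*}
This step, checking that the comparison is uniform in time, is the main point to verify, and the damping term handles it.

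Combining the two bounds gives $v\le \eta^*+\|v_0-\eta_0\|_{L^\infty(\Omega)}+\frac{\tau}{\delta}(\eta^*+w^*)=:v^*$ on $[0,\infty)\times\Omega$. The constant $v^*$ depends only on $\Omega,\gamma,\delta,\tau$ and the initial data, since $\|\eta_0\|_{L^\infty(\Omega)}\le\|h_0\|_{L^\infty(\Omega)}$ and $w^*$ comes from Lemma \ref{Lemma: w uniform upper bound}.
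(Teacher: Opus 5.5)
Your proposal is correct and follows the paper's proof. You use the uniform bound $w^*$ from Lemma~\ref{Lemma: w uniform upper bound} in $\delta\eta_t+\eta=w$ to get $0\le\eta\le\eta^*$, then bound $v-\eta$ through $\mathcal{L}[v-\eta]=\frac{\tau}{\delta}(\eta-w)$ and the parabolic maximum principle. Your explicit spatially constant supersolution $\bar z$ makes the time-independence of that maximum-principle estimate rigorous, a step the paper only covers by saying ``in the same manner as in Proposition~\ref{Prop: v upper bound}''.
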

	\begin{proof}
		It follows from Lemma \ref{Lemma: w uniform upper bound} and the proof of Proposition \ref{Prop: v upper bound} that, for $(t,x)\in [0,\infty)\times\Omega$, 
		\begin{equation}
			0\leq\eta\leq \max\left\{\|\eta_0\|_{L^\infty(\Omega)},w^*\right\}\triangleq\eta^*,
		\end{equation}
		where $w^*$ denotes the uniform-in-time upper bound of $w$. In the same manner as in Proposition \ref{Prop: v upper bound} we deduce that for $(t,x)\in[0,\infty)\times\Omega$,
		\begin{equation}
			v-\eta \leq \|v_0-\eta_0\|_{L^\infty(\Omega)} + \frac{\tau}{\delta}(\eta^* + w^*).
		\end{equation}
		Hence we obtain the time-independent upper bound of $v$. This completes the proof.
	\end{proof}
	
	\subsection{Time-independent upper bound for $u$}
	We now establish a uniform-in-time upper bound for $u$ using a strategy similar to that in Section 4.2. However, since the equation for $\varphi$ contains a nonnegative source term, proving its time-independent boundedness requires additional care. Observing that the equation for $\varphi$ is dual to that for u, we derive energy-type estimates involving $u\varphi^p$, which yield time-independent $L^p$-estimates for $u$.

	By the definition of $\varphi$, we have
	\begin{equation}\label{varphi bounded by u homo-uniform-case}
		\gamma_*u \leq \varphi \leq \gamma^*u, \quad (t,x)\in[0,\infty)\times\Omega.
	\end{equation}We begin with the establishment of low-order estimates.
	\begin{lemma}\label{Lemma: u L2 homo-uniform-case}
		Suppose that $\gamma(\cdot)$ satisfies \eqref{A1} and \eqref{A2} and that $f\equiv0$. Then it holds for $t\in[0,\infty)$ that
		\begin{equation}\label{u L2 homo-uniform-case}
			\|u\|^2_{L^2(\Omega)} \leq C, \quad \int_t^{t+1} \|\nabla\varphi\|^2_{L^2(\Omega)}\;\rd s \leq C,
		\end{equation}
		where $C$ is a positive constant depending on $\Omega$, $\delta$, $\tau$, $\gamma$ and the initial data.
	\end{lemma}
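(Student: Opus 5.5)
We test \eqref{Eq of u} (with $f\equiv0$) against $\varphi=\gamma(v)u$, in line with the remark that the equation for $\varphi$ is dual to that for $u$. Since $u_t=\Delta\varphi$ and $\nabla\varphi\cdot\mathbf n=0$ on $\partial\Omega$, integration by parts gives
\begin{equation*}
\int_\Omega u_t\varphi\,\rd x=-\int_\Omega|\nabla\varphi|^2\,\rd x .
\end{equation*}
On the other hand, writing $\Gamma(v)=\gamma(v)$, we have $u_t\varphi=\frac12\frac{d}{dt}(\gamma(v)u^2)-\frac12\gamma'(v)v_tu^2$. This yields
\begin{equation*}
\frac12\frac{d}{dt}\int_\Omega\gamma(v)u^2\,\rd x+\int_\Omega|\nabla\varphi|^2\,\rd x=\frac12\int_\Omega\gamma'(v)v_tu^2\,\rd x=-\frac12\int_\Omega\frac{|\gamma'(v)|}{\gamma(v)}\,v_t\,u\varphi\,\rd x .
\end{equation*}
The sign of $v_t$ is the issue. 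We remove it exactly as in Proposition \ref{Prop: u upper bound}, via the decomposition \eqref{key identity 2 homo-case}:
\begin{equation*}
\delta v_t=w+\tau\Psi-\tau\psi-v-\rho\geq-\tau\psi-v-\rho .
\end{equation*}
Therefore $-v_t\le\frac1\delta(\tau\psi+v+\rho)\le C$ uniformly in time, by Lemma \ref{Lemma: w uniform upper bound}, Proposition \ref{Prop: uniform upper bound for v in homo-case} and \eqref{rho upper bound}. Moreover, $v\in[0,v^*]$ with $v^*$ time-independent, so $|\gamma'(v)|/\gamma(v)\le C$ by continuity of $\gamma'$ on $[0,v^*]$ and \eqref{A2}. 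With $y(t):=\int_\Omega\gamma(v)u^2$, which is comparable to $\|u\|_{L^2}^2$ by \eqref{varphi bounded by u homo-uniform-case}, we arrive at
\begin{equation*}
y'+2\|\nabla\varphi\|_{L^2}^2\le C\|u\|_{L^2}^2\le C\|\varphi\|_{L^2}^2 .
\end{equation*}

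To close the estimate, we absorb the right-hand side using mass conservation. The Gagliardo--Nirenberg/Poincaré-type inequality
\begin{equation*}
\|\varphi\|_{L^2}^2\le\varepsilon\|\nabla\varphi\|_{L^2}^2+C_\varepsilon\|\varphi\|_{L^1}^2
\end{equation*}
holds in any dimension, since it follows from compactness of $H^1\hookrightarrow L^2$ combined with an Ehrling-type argument using $L^1$. Here $\|\varphi\|_{L^1}\le\gamma^*\|u_0\|_{L^1}$ by Lemma \ref{Lemma:u,v,h L1-estimates}. Choosing $\varepsilon$ small gives
\begin{equation*}
y'+\|\nabla\varphi\|_{L^2}^2\le C .
\end{equation*}
Adding $y\le C\|\varphi\|_{L^2}^2\le\frac12\|\nabla\varphi\|_{L^2}^2+C$ to both sides then gives
\begin{equation*}
y'+y+\frac12\|\nabla\varphi\|_{L^2}^2\le C .
\end{equation*}
An ODE comparison yields $y(t)\le\max\{y(0),C\}$, hence $\|u\|_{L^2}^2\le C$ for all times. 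Integrating the last inequality over $(t,t+1)$ gives $\int_t^{t+1}\|\nabla\varphi\|_{L^2}^2\le C$.

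The main obstacle is the sign-indefinite term containing $v_t$. The key step is the one-sided lower bound on $v_t$ from the second key identity, which discards $w+\tau\Psi\ge0$. The rest is routine, though the time derivative of $\gamma(v)u^2$ should be checked carefully, since the factor $\frac12$ matters only cosmetically. An alternative to the Ehrling inequality, if preferred, is to interpolate $L^2$ between $L^1$ and $L^{2N/(N-2)}$ via Sobolev. Either way the constants depend only on $\Omega,\delta,\tau,\gamma$ and the initial data.
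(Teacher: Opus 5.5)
Your argument is correct. Its core coincides with the paper's: since $\int_\Omega\gamma(v)u^2\,\rd x=\int_\Omega\varphi u\,\rd x$, your energy identity is exactly the paper's differential inequality \eqref{u varphi ODI}. You also handle the indefinite $v_t$-term the same way, through the one-sided bound $-\delta v_t\le\tau\psi+v+\rho$ from the second key identity.

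The genuine difference is how the inequality is closed.

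\textbf{The paper's route.} It first tests \eqref{key identity 1 homo-case} with $u$. Using self-adjointness of $\mathcal{A}^{-1}$ and the pointwise bound $w\le w^*$, this gives $\frac12\frac{d}{dt}\bigl(\|w\|_{L^2(\Omega)}^2+\|\nabla w\|_{L^2(\Omega)}^2\bigr)+\gamma_*\|u\|_{L^2(\Omega)}^2\le C$, hence $\int_t^{t+1}\|u\|_{L^2(\Omega)}^2\,\rd s\le C$. It then applies the uniform Gronwall lemma (Lemma \ref{Lemma: uniform Gronwall inequality}). That lemma only yields the bound for $t\ge1$, so the interval $[0,1]$ is patched using global existence. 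Finally the inequality is re-integrated to obtain the gradient estimate.

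\textbf{Your route.} You absorb $C\|\varphi\|_{L^2(\Omega)}^2$ using the Ehrling-type inequality $\|\varphi\|_{L^2(\Omega)}^2\le\varepsilon\|\nabla\varphi\|_{L^2(\Omega)}^2+C_\varepsilon\|\varphi\|_{L^1(\Omega)}^2$ together with mass conservation. This turns the estimate into the dissipative inequality $y'+y+\frac12\|\nabla\varphi\|_{L^2(\Omega)}^2\le C$.

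\textbf{Comparison.} Your route is shorter and bounds $y$ directly for all $t\ge0$ by $\max\{y(0),C\}$. It needs neither the $w$-energy step nor the uniform Gronwall lemma. It is the same kind of mechanism the paper itself uses in Proposition \ref{Prop: u Lp homo-case}, via Gagliardo--Nirenberg. The paper's route avoids any functional inequality at this stage. It relies only on the duality $\int_\Omega\mathcal{A}^{-1}[\varphi]u\,\rd x=\int_\Omega\varphi w\,\rd x$ and the pointwise bound on $w$. Both arguments still depend on Lemma \ref{Lemma: w uniform upper bound}, through $\psi^*$ and $v^*$.

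One small caveat: your aside about interpolating between $L^1$ and $L^{2N/(N-2)}$ only makes sense for $N\ge3$. The Ehrling version you actually use is dimension-free, so the proof is unaffected.
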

	\begin{proof}
		Recalling that $u = \mathcal{A}[w] = -\Delta w + w$, it holds for all $t\in[0,\infty)$ that
		\begin{equation}\label{w H1 bounded}
			\|w\|^2_{L^2(\Omega)} + \|\nabla w\|^2_{L^2(\Omega)} = \int_\Omega wu\;\rd x\leq w^*\int_\Omega u \;\rd x\leq C.
		\end{equation}
		Multiplying \eqref{key identity 1 homo-case} by $u$ and integrating over $\Omega$, it follows from Young's inequality that
		\begin{equation*}
			\int_\Omega w_tu\;\rd x + \int_\Omega \gamma(v)u^2\;\rd x 
			= \int_\Omega \mathcal{A}^{-1}[\gamma(v)u]u\;\rd x
			=\int_\Omega \gamma(v)u\mathcal{A}^{-1}[u]\;\rd x 
			\leq \gamma^*\int_\Omega uw\;\rd x 
			\leq C.
		\end{equation*}
		Thus we have 
		\begin{equation*}
			\frac{1}{2}\dfrac{d}{dt}\left(\|w\|^2_{L^2(\Omega)} + \|\nabla w\|^2_{L^2(\Omega)}\right) + {\gamma_*}\|u\|^2_{L^2(\Omega)} \leq C,
		\end{equation*}
		An integration of the above inequality with respect to time from $t$ to $t+1$ together with \eqref{w H1 bounded} then yields that
		\begin{equation*}
			\int_t^{t+1} \|u\|^2_{L^2(\Omega)}\;\rd s \leq C,  \quad  t\in[0,\infty).
		\end{equation*}  
		Recall from the proof of Proposition \ref{Prop: u upper bound} that
		\begin{equation*}
			\varphi_t \leq \gamma(v)\Delta\varphi + \frac{1}{\delta}\frac{|\gamma'(v)|}{\gamma(v)}(\tau\psi + v + \rho)\varphi.
		\end{equation*}
		Since $\psi$, $v$ and $\rho$ are all uniform-in-time bounded, it follows that
		\begin{equation}\label{varphi PDI}
			\varphi_t \leq \gamma(v)\Delta\varphi + C\varphi,
		\end{equation}
		where $C$ is a positive constant independent of time. Multiplying \eqref{varphi PDI} by $u$ and integrating over $\Omega$, we have
		\begin{equation*}
			\int_\Omega \varphi_tu\;\rd x \leq \int_\Omega \varphi \Delta\varphi + C\int_\Omega \varphi u\;\rd x.
		\end{equation*}
		Based on \eqref{Eq of u}, it can be deduced that
		\begin{equation}\label{u varphi ODI}
			\dfrac{d}{dt}\int_\Omega \varphi u\;\rd x + 2\|\nabla\varphi\|^2_{L^2(\Omega)} \leq C\int_\Omega \varphi u\;\rd x.
		\end{equation}
		Since 
		\begin{equation*}
			\int_t^{t+1}\int_\Omega \varphi u\;\rd x\;\rd s \leq \gamma^*\int_t^{t+1}\|u\|^2_{L^2(\Omega)}\;\rd s \leq C,
		\end{equation*}
		we may apply the uniform Gronwall inequality Lemma \ref{Lemma: uniform Gronwall inequality} to deduce that there exists some $C>0$ such that
		\begin{equation*}
			\int_\Omega \varphi u\;\rd x \leq C, \quad \text{for all } t \geq 1.
		\end{equation*}
		Furthermore, from \eqref{varphi bounded by u homo-uniform-case} we infer that
		\begin{equation*}
			\|u\|^2_{L^2(\Omega)} \leq \frac{1}{\gamma_*}\int_\Omega \varphi u\;\rd x \leq C, \quad \text{for all } t \geq 1,
		\end{equation*}
		which along with global existence result implies that
		\begin{equation*}
			\|u\|^2_{L^2(\Omega)}\leq C, \quad t\in[0,\infty),
		\end{equation*}
		which in turn gives that 
		\begin{equation*}
			\int_\Omega \varphi u\;\rd x \leq C, \quad t\in[0,\infty).
		\end{equation*}
		Finally, an integration of \eqref{u varphi ODI} over the time interval $(t, t+1)$, together with the above fact, leads to
		\begin{equation*}
			\int_t^{t+1} \|\nabla\varphi\|^2_{L^2(\Omega)}\;\rd s \leq C, \quad t\in[0,\infty).
		\end{equation*}
		This complete the proof.
	\end{proof}
	
	With the above preparations at hand, we now use an iterative method to establish the $L^p$-estimates of $u$ for all $p \in (1, \infty)$.
	
	\begin{proposition}\label{Prop: u Lp homo-case}
		Suppose that $\gamma(\cdot)$ satisfies \eqref{A1} and \eqref{A2} and that $f\equiv0$. Then for all $p\in(1,\infty)$, there exists some constant $C(p)>0$ depending on $\Omega$, $\gamma$, $\delta$, $\tau$, $p$ and the initial data such that 
		\begin{equation}
			\sup\limits_{t\geq0} \|u(t)\|_{L^p(\Omega)}\leq C(p).
		\end{equation}
	\end{proposition}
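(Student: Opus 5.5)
We generalize the energy computation of Lemma \ref{Lemma: u L2 homo-uniform-case} from $\int_\Omega \varphi u$ to $\int_\Omega u\varphi^{p}$, and then close with the uniform Gronwall inequality (Lemma \ref{Lemma: uniform Gronwall inequality}). Since $\gamma_*u\le\varphi\le\gamma^*u$ by \eqref{varphi bounded by u homo-uniform-case}, controlling $\int_\Omega u\varphi^p$ is equivalent to controlling $\|u\|_{L^{p+1}(\Omega)}^{p+1}$. We proceed by induction on $p$, starting from the $L^2$ bound in Lemma \ref{Lemma: u L2 homo-uniform-case}.

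\textbf{Energy identity.} Fix $p\ge1$. We use $u_t=\Delta\varphi$ together with the differential inequality \eqref{varphi PDI}, $\varphi_t\le\gamma(v)\Delta\varphi+C\varphi$, where $C$ is independent of time. Writing $\gamma(v)u=\varphi$, we compute
\begin{equation*}
\frac{d}{dt}\int_\Omega u\varphi^p\,\rd x=\int_\Omega \varphi^p\Delta\varphi\,\rd x+p\int_\Omega u\varphi^{p-1}\varphi_t\,\rd x
\le (p+1)\int_\Omega\varphi^p\Delta\varphi\,\rd x+Cp\int_\Omega u\varphi^p\,\rd x .
\end{equation*}
Integrating by parts with the Neumann condition gives $\int_\Omega\varphi^p\Delta\varphi\,\rd x=-p\int_\Omega\varphi^{p-1}|\nabla\varphi|^2\,\rd x=-\frac{4p}{(p+1)^2}\|\nabla\varphi^{\frac{p+1}{2}}\|_{L^2(\Omega)}^2$. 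Hence
\begin{equation*}
\frac{d}{dt}\int_\Omega u\varphi^p\,\rd x+\frac{4p}{p+1}\|\nabla\varphi^{\frac{p+1}{2}}\|_{L^2(\Omega)}^2\le Cp\int_\Omega u\varphi^p\,\rd x .
\end{equation*}

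\textbf{Closing via uniform Gronwall.} To apply Lemma \ref{Lemma: uniform Gronwall inequality} we need $\int_t^{t+1}\int_\Omega u\varphi^p\,\rd x\,\rd s\le C$. Since this quantity is comparable to $\int_t^{t+1}\|\varphi\|_{L^{p+1}(\Omega)}^{p+1}\,\rd s$, we obtain it from the previous induction step. If $\sup_t\|\varphi\|_{L^{q}(\Omega)}\le C$ and $\int_t^{t+1}\|\nabla\varphi^{q/2}\|_{L^2(\Omega)}^2\,\rd s\le C$, then Gagliardo--Nirenberg applied to $\varphi^{q/2}$ yields a uniform bound on $\int_t^{t+1}\|\varphi\|_{L^{r}(\Omega)}^{r}\,\rd s$ for some $r>q$. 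Any $r$ with $r\le q+\frac{2q}{N}$ works, by the standard parabolic embedding $L^\infty_tL^2_x\cap L^2_tH^1_x\subset L^{2+4/N}_{t,x}$ applied to $\varphi^{q/2}$, using the $L^1$ bound of $\varphi^{q/2}$ that follows from the $L^q$ bound of $\varphi$.

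So from level $q=p$ we obtain the time-integrated $L^{r}$ bound with $r=\min\{p+1,\,p(1+2/N)\}$. If $p(1+2/N)<p+1$, we simply run the energy identity with an intermediate exponent $p'=r-1$ rather than $p$. Uniform Gronwall then gives $\sup_{t\ge1}\int_\Omega u\varphi^{p'}\,\rd x\le C$; on $[0,1]$ the same bound holds by Theorem \ref{Th.1.1}. Integrating the energy inequality over $(t,t+1)$ afterwards supplies the gradient bound $\int_t^{t+1}\|\nabla\varphi^{\frac{p'+1}{2}}\|_{L^2(\Omega)}^2\,\rd s\le C$ needed for the next step.

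\textbf{Iteration.} Starting at $q=2$ from Lemma \ref{Lemma: u L2 homo-uniform-case}, which supplies exactly the required pair $\sup_t\|\varphi\|_{L^2(\Omega)}\le C$ and $\int_t^{t+1}\|\nabla\varphi\|_{L^2(\Omega)}^2\,\rd s\le C$, each step increases the exponent by a factor of at least $\min\{1+\frac1q,\,1+\frac2N\}$. Since every increment is at least $\min\{1,\,2q/N\}\ge\min\{1,4/N\}$, the exponents go to infinity after finitely many steps for any target $p$. Interpolation with the $L^1$ bound then covers all intermediate $p$.

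The main obstacle is precisely the bookkeeping of this interpolation step: verifying that Gagliardo--Nirenberg yields the time-integrated higher-norm bound with constants independent of $t$. The nondegeneracy coming from \eqref{A2} is what makes the dissipation term uniformly coercive here, which is why the argument yields time-independent constants.
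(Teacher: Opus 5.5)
Your proposal is correct and follows essentially the same route as the paper: the energy inequality for $\int_\Omega u\varphi^p$ obtained from \eqref{varphi PDI} and $u_t=\Delta\varphi$, closed by the uniform Gronwall lemma, with Gagliardo--Nirenberg applied to $\varphi^{q/2}$ to raise the exponent starting from the data of Lemma \ref{Lemma: u L2 homo-uniform-case}. The paper always takes the full step $q\mapsto q(1+\frac{2}{N})$, i.e.\ $p_{j+1}+1=(p_j+1)(1+\frac{2}{N})$, rather than capping it at $q+1$; note also that the embedding you invoke needs the $L^2$ bound of $\varphi^{q/2}$ (the $L^q$ bound of $\varphi$), not an $L^1$ bound, although even the $L^1$ version would give a step $q\mapsto q(1+\frac{1}{N})$ and the iteration would still close.
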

	\begin{proof}
		As discussed in Lemma \ref{Lemma: u L2 homo-uniform-case}, there exists a time-independent constant $C>0$ such that for all $(t,x)\in(0,\infty)\times\Omega$,
		\begin{equation*}
			\varphi_t \leq \gamma(v)\Delta\varphi + C\varphi.
		\end{equation*}
		Multiplying both sides by $u\varphi^{p-1}$ with some $p>1$ and integrating over $\Omega$, we have
		\begin{equation*}
			\int_\Omega \varphi_t\varphi^{p-1}u\;\rd x \leq \int_\Omega \varphi^p\Delta\varphi\;\rd x + C\int_\Omega \varphi^{p}u\;\rd x,
		\end{equation*}
		which then implies that 
		\begin{equation*}
			\frac{1}{p}\dfrac{d}{dt}\int_\Omega \varphi^p u \;\rd x - \frac{1}{p}\int_\Omega \varphi^p u_t \;\rd x - \int_\Omega \varphi^p\Delta\varphi\;\rd x \leq C\int_\Omega \varphi^{p}u\;\rd x.
		\end{equation*}
		Recalling that $u_t = \Delta\varphi$, an integration by parts applied to the above gives
		\begin{equation}\label{varphi^p u ODI}
			\frac{1}{p}\dfrac{d}{dt}\int_\Omega \varphi^p u \;\rd x + \frac{4}{p+1}\|\nabla\varphi^{\frac{p+1}{2}}\|^2_{L^2(\Omega)} \leq C\int_\Omega \varphi^{p}u\;\rd x.
		\end{equation}
		By the Gagliardo-Nirenberg inequality, 
		\begin{equation*}
			\|\varphi\|^{2+\frac{4}{N}}_{L^{2+\frac{4}{N}}(\Omega)} \leq C\left(\|\nabla\varphi\|^2_{L^2(\Omega)}\|\varphi\|_{L^2(\Omega)}^{\frac{4}{N}} + \|\varphi\|_{L^2(\Omega)}^{2+\frac{4}{N}}\right).
		\end{equation*}
		Combining \eqref{varphi bounded by u homo-uniform-case} and \eqref{u L2 homo-uniform-case}, we may infer that
		\begin{equation*}
			\int_\Omega\varphi^{1+\frac{4}{N}}u\;\rd x \leq \frac{1}{\gamma_*}\|\varphi\|^{2+\frac{4}{N}}_{L^{2+\frac{4}{N}}(\Omega)} \leq C\|\nabla\varphi\|^2_{L^2(\Omega)} + C.
		\end{equation*}
		Integrating the above equality with respect to time from $t$ to $t + 1$ yields
		\begin{equation*}
			\int_t^{t+1}\int_\Omega\varphi^{1+\frac{4}{N}}u\;\rd x\;\rd s \leq C\int_t^{t+1}\|\nabla\varphi\|^2_{L^2(\Omega)}\;\rd s + C \leq C.
		\end{equation*}
		Taking $p_1=1+\frac{4}{N}$ in \eqref{varphi^p u ODI} and then using the uniform Gronwall inequality Lemma \ref{Lemma: uniform Gronwall inequality}, we obtain
		\begin{equation*}
			\int_\Omega\varphi^{p_1}u\;\rd x \leq C(p_1) \quad \text{for all } t\geq1.
		\end{equation*}
		Thanks to \eqref{varphi bounded by u homo-uniform-case}, we infer that
		\begin{equation*}
			\|u\|^{p_1+1}_{L^{p_1+1}(\Omega)} \leq C(p_1) \quad \text{for all } t\geq1,
		\end{equation*}
		which along with the global existence result gives
		\begin{equation*}
			\sup\limits_{t\geq0}\|u\|^{p_1+1}_{L^{p_1+1}(\Omega)} \leq C(p_1).
		\end{equation*}
		By \eqref{varphi bounded by u homo-uniform-case} again, it also holds
		\begin{equation*}
			\sup\limits_{t\geq0} \int_\Omega\left(\varphi^{p_1}u+\varphi^{p_1+1}\right)\;\rd x \leq C(p_1).
		\end{equation*}
		Moreover, an integration of \eqref{varphi^p u ODI} over the time interval $(t, t + 1)$, together with the above fact, leads to
		\begin{equation*}
			\int_t^{t+1} \|\nabla\varphi^{\frac{p_1+1}{2}}\|^2_{L^2(\Omega)}\;\rd s \leq C(p_1).
		\end{equation*}
		Using the Gagliardo–Nirenberg inequality again, we may infer that
		\begin{equation*}
			\begin{aligned}
				\|\varphi\|^{(p_1+1)(1+\frac{2}{N})}_{L^{(p_1+1)(1+\frac{2}{N})}(\Omega)} = \|\varphi^{\frac{p_1+1}{2}}\|^{2+\frac{4}{N}}_{L^{2+\frac{4}{N}}(\Omega)} 
				&\leq C\left(\|\nabla\varphi^{\frac{p_1+1}{2}}\|^2_{L^2(\Omega)}\|\varphi^{\frac{p_1+1}{2}}\|_{L^2(\Omega)}^{\frac{4}{N}} + \|\varphi^{\frac{p_1+1}{2}}\|_{L^2(\Omega)}^{2+\frac{4}{N}}\right)\\
				&\leq C\left(\|\nabla\varphi^{\frac{p_1+1}{2}}\|^2_{L^2(\Omega)}\|\varphi\|_{L^{p_1+1}(\Omega)}^{\frac{2(p_1+1)}{N}} + \|\varphi\|_{L^{p_1+1}(\Omega)}^{(p_1+1)(1+\frac{2}{N})}\right)\\
				&\leq C(p_1)\|\nabla\varphi^{\frac{p_1+1}{2}}\|^2_{L^2(\Omega)} + C(p_1).
			\end{aligned}
		\end{equation*}
		An Integration of the above  over $(t,t+1)$ then gives rise to
		\begin{equation*}
			\int_t^{t+1}\|\varphi\|^{(p_1+1)(1+\frac{2}{N})}_{L^{(p_1+1)(1+\frac{2}{N})}(\Omega)}\;\rd s \leq C(p_1)\int_t^{t+1}\|\nabla\varphi^{\frac{p_1+1}{2}}\|^2_{L^2(\Omega)}\;\rd s + C(p_1) \leq C(p_1).
		\end{equation*}
		Then by taking $p_2 + 1 = (p_1+1)(1+\frac{2}{N})$, we can conclude in the same manner as the above that
		\begin{equation*}
			\sup\limits_{t\geq0}\|u\|^{p_2+1}_{L^{p_2+1}(\Omega)} \leq C(p_2).
		\end{equation*}
		Taking $p_{j+1}+1 = (p_j+1)(1+\frac{2}{N})$, for $j\in\mathbb{N}$ and recalling that $p_1=1+\frac{4}{N}$, we infer that  $$p_{j+1} -p_{j}=\frac2N(p_j+1)\geq\frac2N(2+\frac4N).$$
		Finally, an iteration of the above procedure concludes the proof by choosing $j$ sufficiently large.
	\end{proof}
	
	Next we show the $L^\infty$-estimates for $\nabla v$.
	\begin{corollary}\label{cor: nabla v-estimates homo-case}
		Suppose that the initial data satisfies \eqref{A0}, $\gamma(\cdot)$ satisfies \eqref{A1} and \eqref{A2} and that $f\equiv0$. Then there exists a constant $C>0$ depending on $\Omega$, $\gamma$, $\delta$, $\tau$ and the initial data such that 
		\begin{equation}
			\sup\limits_{t\geq0} \|\nabla v(t)\|_{L^\infty(\Omega)}\leq C.
		\end{equation}
	\end{corollary}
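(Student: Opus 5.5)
We first obtain a uniform-in-time $L^\infty$ bound for $h$. Then we apply standard semigroup smoothing estimates for the Neumann heat semigroup to the $v$-equation. The main tools are Proposition \ref{Prop: u Lp homo-case}, which gives uniform $L^p$ bounds on $u$ for every finite $p$, and the ODE structure of \eqref{Eq of h}.

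\textbf{Step 1: uniform bound on $h$ in $L^p$.} From \eqref{Eq of h} we have the representation
\[
h(t)=e^{-t/\delta}h_0+\frac1\delta\int_0^t e^{-(t-s)/\delta}u(s)\,\rd s .
\]
Minkowski's inequality then gives
\[
\|h(t)\|_{L^p(\Omega)}\le \|h_0\|_{L^p(\Omega)}+\sup_{s\ge0}\|u(s)\|_{L^p(\Omega)}\le C(p)
\]
for every $p\in(1,\infty)$, uniformly in $t\ge0$. Here we use $h_0\in W^{1,\infty}(\Omega)$ and Proposition \ref{Prop: u Lp homo-case}. We fix some $p>N$.

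\textbf{Step 2: gradient estimate via the semigroup.} Write \eqref{Eq of v} as $v_t=\frac1\tau(\Delta-1)v+\frac1\tau h$. Let $(e^{t\Delta})_{t\ge0}$ be the Neumann heat semigroup. The variation-of-constants formula reads
\[
v(t)=e^{\frac t\tau(\Delta-1)}v_0+\frac1\tau\int_0^t e^{\frac{t-s}\tau(\Delta-1)}h(s)\,\rd s .
\]
We will use two standard smoothing estimates. The first is $\|\nabla e^{t\Delta}z\|_{L^\infty}\le C\|\nabla z\|_{L^\infty}$ for $z\in W^{1,\infty}(\Omega)$, which controls the initial term by $C\|\nabla v_0\|_{L^\infty}$ times the factor $e^{-t/\tau}$. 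The second is
\[
\|\nabla e^{t\Delta}z\|_{L^\infty}\le C\bigl(1+t^{-\frac12-\frac N{2p}}\bigr)e^{-\lambda_1 t}\|z\|_{L^p}\quad\text{for } z\in L^p(\Omega),
\]
with $\lambda_1>0$ the first nonzero Neumann eigenvalue. Since gradients annihilate constants, one may subtract the mean of $h$ before applying this estimate. Alternatively, the extra factor $e^{-(t-s)/\tau}$ coming from the $-v$ term already supplies exponential decay. Applying the second estimate to the Duhamel integral gives
\[
\|\nabla v(t)\|_{L^\infty}\le C\|\nabla v_0\|_{L^\infty}+C\sup_{s\ge0}\|h(s)\|_{L^p}\int_0^\infty\Bigl(1+\sigma^{-\frac12-\frac N{2p}}\Bigr)e^{-\sigma/\tau}\,\rd\sigma .
\]
The integral is finite precisely because $p>N$ makes $\frac12+\frac N{2p}<1$. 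This yields the claim.

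\textbf{Main obstacle.} There is no serious obstacle once Proposition \ref{Prop: u Lp homo-case} is available. The only points to check are that the constants are time-independent, which is ensured by the damping factor $e^{-s/\tau}$, and that the exponent is integrable near $s=0$, which is handled by choosing $p>N$.
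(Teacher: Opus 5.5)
Your proof is correct and follows the paper's route. You first get a uniform $L^p$ bound on $h$ from Proposition \ref{Prop: u Lp homo-case}, using the Duhamel formula and Minkowski where the paper tests \eqref{Eq of h} with $h^{p-1}$. You then apply the $L^p\to W^{1,\infty}$ Neumann-semigroup smoothing estimate with $p>N$ and the damping factor $e^{-(t-s)/\tau}$.

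The one difference is the initial-data term. The paper writes the Duhamel formula for $v-v_0$ with forcing $h+\Delta v_0-v_0$, which uses $v_0\in W^{2,\infty}(\Omega)$ and $\nabla v_0\cdot\mathbf{n}=0$ from \eqref{A0}. This avoids the bound $\|\nabla e^{t\Delta}z\|_{L^\infty(\Omega)}\le C\|\nabla z\|_{L^\infty(\Omega)}$ that you call standard. That bound is an $L^\infty$ endpoint not covered by \cite[Lemma 1.3]{winkler2010JDEaggregation}, which states it only for finite $p$, so it would need separate justification (for example, via gradient bounds for the Neumann heat kernel).
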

	\begin{proof}
		Multiplying \eqref{Eq of h} by $h^{p-1}$ and integrating over $\Omega$ by parts, we have\
		\begin{equation*}
			\frac{\delta}{p}\dfrac{d}{dt}\int_\Omega h^p \;\rd x + \int_\Omega h^p\;\rd x = \int_\Omega uh^{p-1}\;\rd x.
		\end{equation*}
		Applying Young's inequality, it follows that
		\begin{equation*}
			\frac{\delta}{p}\dfrac{d}{dt}\int_\Omega h^p \;\rd x + \int_\Omega h^p\;\rd x \leq \frac{1}{p}\int_\Omega u^p\;\rd x + \left(1-\frac{1}{p}\right)\int_\Omega h^p\;\rd x.
		\end{equation*}
		Thus we deduce that
		\begin{equation*}
			\delta\dfrac{d}{dt}\int_\Omega h^p\;\rd x + \int_\Omega h^p\;\rd x \leq \int_\Omega u^p\;\rd x.
		\end{equation*}
		With the aid of Proposition \ref{Prop: u Lp homo-case}, we can conclude that  $\|h(t)\|_{L^p(\Omega)}\leq C(p)$ for all $t\geq 0$ and $1<p<\infty$.
		Recalling that $v_0\in W^{2,\infty}(\Omega)$ and $\nabla v_0\cdot \mathbf{n} = 0$ on $\partial\Omega$, we may infer from \eqref{Eq of v} that 
		\begin{equation*}
			v-v_0 = \frac{1}{\tau}\int_0^t e^{-\frac{\mathcal{A}}{\tau}(t-s)}\left(h + \Delta v_0 - v_0\right)\;\rd s, \quad (t,x)\in(0,\infty)\times\Omega.
		\end{equation*}
		According to \cite[Lemma 1.3]{winkler2010JDEaggregation}, for $t\in(0,\infty)$, we have
		\begin{equation*}
			\|\nabla(v-v_0)\|_{L^\infty(\Omega)} \leq C\int_0^t (1 + t^{-\frac{1}{2}-\frac{N}{2p}})e^{-\frac{1+\lambda_1}{\tau}(t-s)}\|h + \Delta v_0 - v_0\|_{L^p(\Omega)}\;\rd s \leq C,
		\end{equation*}
		where $\lambda_1$ denotes the first nonzero eigenvalue of $-\Delta$ in $\Omega$ under Neumann boundary conditions and $p>N$ is chosen such that the above inequality holds. Thus the proof is completed.
	\end{proof}

	\noindent \textbf{Proof of Theorem \ref{Th.1.2}.} 
	A time‑independent upper bound for $v$ has been established in Proposition \ref{Prop: uniform upper bound for v in homo-case}. With the aid of Corollary \ref{cor: nabla v-estimates homo-case}, we may further use a standard bootstrap
	argument (cf. \cite[Lemma A.1]{tao2012boundedness}) to prove that 
	\begin{equation*}
		\sup\limits_{t\geq 0} \|u(t)\|_{L^\infty(\Omega)} \leq C.
	\end{equation*}
	Similar to \eqref{h L^infty}, it holds for all $t \geq 0$ that
	\begin{equation}\label{h uniform L^infty homo-case}
		\begin{aligned}
			\|h\|_{L^\infty(\Omega)} &\leq e^{-\frac{t}{\delta}}\|h_0\|_{L^\infty(\Omega)} + \frac{1}{\delta}\int_0^t e^{-\frac{t-s}{\delta}}\|u\|_{L^\infty(\Omega)} \;\rd s\\
			&\leq e^{-\frac{t}{\delta}}\|h_0\|_{L^\infty(\Omega)} + C(1-e^{-\frac{t}{\delta}})\\
			&\leq C.
		\end{aligned}
	\end{equation}
	Theorem \ref{Th.1.2} is thereby proved.\qed

	\section{The critical mass phenomenon for $\gamma(v)=e^{-v}$ in 2D}\label{section 6}
	In this section, we are interested in the boundedness of the obtained global classical solutions to the homogeneous system in a smooth bounded domain $\Omega \subset \mathbb{R}^2$. We focus on the specific case with  $\gamma(v) = e^{-v}$. For simplicity, we set $\tau = \delta = 1$ and consider the following initial-boundary value problem subject to Neumann boundary conditions:
	\begin{equation}\label{Blow-up Problem}
		\left\{
		\begin{aligned}
			&u_t = \Delta(u e^{-v}), 
			& (t,x) &\in (0,\infty)\times\Omega, \\
			&v_t - \Delta v + v = h, 
			& (t,x) &\in (0,\infty)\times\Omega, \\
			&h_t + h = u,
			& (t,x) &\in (0,\infty)\times\Omega, \\
			&\nabla u\cdot\mathbf{n} = \nabla v\cdot\mathbf{n} = 0, 
			& (t,x) &\in (0,\infty)\times\partial\Omega,\\
			&(u,v,h)(0,x) = (u_0,v_0,h_0), & x &\in \Omega.
		\end{aligned}
		\right.
	\end{equation}
	For $m > 0$, we set 
	\begin{equation*}
		\mathcal{I}_m := \left\{(u,v,h)\in W^{2,\infty}_B(\Omega)\times W^{2,\infty}_B(\Omega)\times W^{1,\infty}(\Omega)\,\, | \,\,u,v,h\geq 0,u\not\equiv0,\|u\|_{L^1(\Omega)}=m\right\},
	\end{equation*}
	with 
	\begin{equation*}
		W^{2,\infty}_B(\Omega) \triangleq \left\{ z\in W^{2,\infty}(\Omega)\,\, | \,\, \nabla z\cdot\textbf{n}=0 \right\}.
	\end{equation*}
	We begin with the availability of a Lyapunov
	functional.
	\begin{lemma}\label{Lemma: Lyapunov functional}
		For $t\geq 0$, there holds 
		\begin{equation}\label{Fuctional and Dissipation}
			\dfrac{d}{dt}\mathcal{F}(u,v,h) + \mathcal{D}(u,v,h) = 0,
		\end{equation}
		where the functional $\mathcal{F}(u,v,h)$ is defined by 
		\begin{equation}\label{Def: functional}
			\mathcal{F}(u,v,h) := \int_\Omega (u\ln{u}-uv)\;\rd x +\frac{1}{2}\left(\|v\|^2_{L^2(\Omega)} + \|\nabla v\|^2_{L^2(\Omega)}\right) + \frac{1}{2}\|\Delta v - v + h\|^2_{L^2(\Omega)},
		\end{equation}
		and 
		\begin{equation}
			\mathcal{D}(u,v,h) := \int_\Omega ue^{-v}|\nabla(\ln{u}-v)|^2\;\rd x + \|\nabla(\Delta v - v + h)\|^2_{L^2(\Omega)} + 2\|\Delta v - v + h\|^2_{L^2(\Omega)}.
		\end{equation}
	\end{lemma}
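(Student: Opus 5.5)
The plan is to verify \eqref{Fuctional and Dissipation} by a direct computation: differentiate each of the three pieces of $\mathcal{F}$ in time and check that the cross terms cancel. The organising observation is that, with $\tau=\delta=1$, the second equation of \eqref{Blow-up Problem} says
\[
z:=\Delta v - v + h = v_t .
\]
So the last term of $\mathcal{F}$ is $\frac12\|v_t\|^2_{L^2(\Omega)}$. Moreover, differentiating the $v$-equation in time and using $h_t=u-h$ shows that $z$ solves
\[
z_t=\Delta z - z + u - h
\]
in $\Omega$, with $\nabla z\cdot\mathbf{n}=0$ on $\partial\Omega$. This is the same computation that led to \eqref{delta v_t + v solves}.

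Before computing, I would settle positivity and regularity, so the identity is meaningful for $t>0$ and extends to $t\ge0$ by continuity.
\begin{itemize}
\item Since $u_0\not\equiv0$, the strong maximum principle applied to $u_t=\Delta(ue^{-v})$ gives $u>0$ on $(0,\infty)\times\overline{\Omega}$. Hence $\ln u$ is well defined and smooth there.
\item By Theorem \ref{Th.1.1} and standard parabolic regularity for $t>0$, $u$ and $v$ are regular enough that $v_t\in C^{1,2}$. If needed, I would instead use the weak form of the $z$-equation tested against $z$ itself; that suffices for the computation below.
\end{itemize}

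The three derivatives are as follows.
\begin{enumerate}
\item For the entropy part, use $u_t=\Delta(ue^{-v})$, mass conservation (which removes the $+1$ term) and $\nabla(ue^{-v})=ue^{-v}\nabla(\ln u - v)$. Integrating by parts with the Neumann condition gives
\[
\frac{d}{dt}\int_\Omega (u\ln u-uv)\,\rd x=-\int_\Omega ue^{-v}|\nabla(\ln u-v)|^2\,\rd x-\int_\Omega u v_t\,\rd x .
\]
\item For the $H^1$ part, integration by parts and $v-\Delta v=h-v_t$ give
\[
\frac12\frac{d}{dt}\left(\|v\|^2_{L^2(\Omega)}+\|\nabla v\|^2_{L^2(\Omega)}\right)=\int_\Omega v_t(v-\Delta v)\,\rd x=\int_\Omega v_t h\,\rd x-\|v_t\|^2_{L^2(\Omega)} .
\]
\item For the last part, test the $z$-equation with $z=v_t$:
\[
\frac12\frac{d}{dt}\|z\|^2_{L^2(\Omega)}=-\|\nabla z\|^2_{L^2(\Omega)}-\|z\|^2_{L^2(\Omega)}+\int_\Omega v_t u\,\rd x-\int_\Omega v_t h\,\rd x .
\]
\end{enumerate}

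Adding the three, the terms $\pm\int_\Omega uv_t\,\rd x$ and $\pm\int_\Omega v_t h\,\rd x$ cancel. What remains is $-\|v_t\|^2_{L^2(\Omega)}-\|z\|^2_{L^2(\Omega)}=-2\|z\|^2_{L^2(\Omega)}$, together with the two gradient terms, which is exactly $-\mathcal{D}(u,v,h)$.

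The algebra itself is routine. The only genuine obstacle is the justification at the level of regularity: $h$ is only $C^1$ in time with values in $C^0(\overline\Omega)$, so $z_t$ and $\nabla z$ must be handled carefully. I would deal with this in two steps. First, derive the $z$-equation by differentiating the $v$-equation in time; the right-hand side $h_t=u-h$ is continuous, so parabolic regularity gives $v_t\in L^2_{loc}((0,\infty);H^1(\Omega))$ and validates the energy identity for $z$. Second, obtain \eqref{Fuctional and Dissipation} on $(0,\infty)$, then pass to $t=0$ using the continuity of each term of $\mathcal{F}$ guaranteed by \eqref{A0}.
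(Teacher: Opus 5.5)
Your proposal is correct and is essentially the paper's argument. The paper tests the $u$-equation with $\ln u - v$ and then rewrites $\int_\Omega u v_t\,\rd x=\int_\Omega h_t v_t\,\rd x+\int_\Omega h v_t\,\rd x$ using $h=v_t-\Delta v+v$. This produces exactly your three identities, only assembled in the opposite direction: from the dissipation toward $\frac{d}{dt}\mathcal{F}$, rather than term by term. Your positivity and regularity remarks, and your reading of the statement at $t=0$ in integrated form, are extra care the paper omits but do not change the argument.
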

	\begin{proof}
		Multiplying the first equation of \eqref{Blow-up Problem} by $\ln{u}-v$ and integrating over $\Omega$ by parts, we obtain
		\begin{equation*}
			\begin{aligned}
				\int_\Omega u_t(\ln{u}-v)\;\rd x
				&= -\int_\Omega e^{-v}(\nabla u -u\nabla v)\cdot\nabla(\ln{u}-v) \;\rd x\\
				&= -\int_\Omega ue^{-v}|\nabla(\ln{u}-v)|^2\;\rd x,
			\end{aligned}
		\end{equation*}
		and
		\begin{equation*}
			\begin{aligned}
				\int_\Omega u_t(\ln{u}-v)\;\rd x
				&= \dfrac{d}{dt}\int_\Omega u(\ln{u}-v)\;\rd x - \int_\Omega (u_t-uv_t)\;\rd x\\
				&=  \dfrac{d}{dt}\int_\Omega u(\ln{u}-v)\;\rd x + \int_\Omega uv_t \;\rd x\\
				&= \dfrac{d}{dt}\int_\Omega u(\ln{u}-v)\;\rd x + \int_\Omega h_tv_t \;\rd x + \int_\Omega hv_t \;\rd x\\
				&= \dfrac{d}{dt}\int_\Omega u(\ln{u}-v)\;\rd x + \int_\Omega (v_{tt}-\Delta v_t+v_t)v_t \;\rd x + \int_\Omega (v_t-\Delta v+v)v_t \;\rd x\\
				&= \dfrac{d}{dt}\int_\Omega u(\ln{u}-v)\;\rd x + \frac{1}{2}\dfrac{d}{dt}\left(\|v_t\|^2_{L^2(\Omega)} + \|v\|^2_{L^2(\Omega)} + \|\nabla v\|^2_{L^2(\Omega)}\right) + \|\nabla v_t\|^2_{L^2(\Omega)} + 2\| v_t\|^2_{L^2(\Omega)}.
			\end{aligned}
		\end{equation*} 
		Combining the two preceding identities and substituting $v_t$ via the second equation of \eqref{Blow-up Problem} completes the proof.
	\end{proof}
	
	\subsection{Uniform-in-time boundedness with sub-critical mass}
	In this part, we  establish uniform-in-time boundedness of classical solutions with sub-critical mass stated as below.
	
	\begin{proposition}\label{Prop: uniform boundedness with sub-critical mass}
		Let
		\begin{equation}
			M =
			\begin{cases}
				8\pi & \text{if }~ \Omega = \{x \in \mathbb{R}^2; \ |x| < R\} \text{ and } (u_0, v_0, h_0) \text{ is radial in } x, \\
				4\pi & \text{otherwise}.
			\end{cases}
		\end{equation}
		If $m \triangleq \int_\Omega u_0 dx < M$ , then the global classical solution $(u, v, h)$ to system \eqref{Blow-up Problem} is uniformly bounded in time, i.e,
		\begin{equation}\label{uniform boundedness with sub-critical mass}
			\sup_{t \geq 0} \left( \|u(t)\|_{L^\infty(\Omega)} + \|v( t)\|_{L^\infty(\Omega)}+ \|h( t)\|_{L^\infty(\Omega)} \right) < \infty.
		\end{equation}
	\end{proposition}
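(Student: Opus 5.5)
The plan is to use the Lyapunov identity of Lemma \ref{Lemma: Lyapunov functional} together with the Moser--Trudinger inequality (Lemma \ref{Lemma: Moser-Trudinger inequality}) to obtain a time-independent lower bound for $\mathcal{F}$. From this we extract a uniform bound on $\int_\Omega u\ln u$ and on $\|v\|_{H^1}$, and then upgrade to an $L^\infty$ bound on $v$. Once $v\le v^*$ uniformly, $\gamma(v)=e^{-v}\ge e^{-v^*}>0$ on the range of the solution, so we are effectively in the setting of Theorem \ref{Th.1.2}. The arguments of Section \ref{section 5} (Lemma \ref{Lemma: u L2 homo-uniform-case}, Proposition \ref{Prop: u Lp homo-case}, Corollary \ref{cor: nabla v-estimates homo-case} and the bootstrap) then yield the uniform bound on $u$, and the bound on $h$ follows as in \eqref{h uniform L^infty homo-case}.

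\textbf{Step 1 (lower bound of $\mathcal{F}$).} By \eqref{Fuctional and Dissipation}, $\mathcal{F}(t)\le \mathcal{F}(0)$. For the lower bound, apply Jensen's (or the Fenchel) inequality with respect to the probability measure $u\,\rd x/m$:
\begin{equation*}
\int_\Omega u v\,\rd x-\int_\Omega u\ln u\,\rd x\le m\ln\Big(\frac1m\int_\Omega e^{v}\,\rd x\Big).
\end{equation*}
Next use Moser--Trudinger with $z=v$, scaled by a factor chosen to absorb the quadratic term. Since $\frac m{8\pi}\|\nabla v\|^2\cdot\frac{1}{?}$ must be compared against $\frac12\|\nabla v\|^2$, write $e^{v}=e^{\theta\cdot v/\theta}$ with $\theta=\frac{m}{?}$, or more directly apply \eqref{Moser-Trudinger inequality} to $z=\theta v$ with $\theta>0$ and then use Jensen again to get $\ln\int e^{v}\le \frac1\theta\ln\int e^{\theta v}+C$ when $\theta\ge1$. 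Rather than fuss with that, I will follow the standard route: with $m<4\pi$,
\begin{equation*}
m\ln\int_\Omega e^{v}\le \frac{m}{8\pi}\|\nabla v\|_{L^2}^2+\frac{m}{|\Omega|}\|v\|_{L^1}+C,
\end{equation*}
and $\frac m{8\pi}<\frac12$. Hence
\begin{equation*}
\mathcal{F}\ge \Big(\frac12-\frac m{8\pi}\Big)\|\nabla v\|^2_{L^2}+\frac12\|v\|_{L^2}^2-C\|v\|_{L^1}-C\ge \varepsilon\|v\|_{H^1}^2-C.
\end{equation*}
Here $\|v\|_{L^1}$ is bounded by Lemma \ref{Lemma:u,v,h L1-estimates}. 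In the radial disk case, \eqref{Moser-Trudinger inequality in a disk} gives the constant $\frac1{16\pi}+\varepsilon$, which allows $m<8\pi$. Keeping a fraction of the entropy aside (applying Fenchel to $(1-\epsilon)$ times $uv$), we obtain
\begin{equation*}
\sup_t\Big(\int_\Omega u\ln u\,\rd x+\|v\|_{H^1}^2+\|\Delta v-v+h\|_{L^2}^2\Big)\le C.
\end{equation*}
Moreover, $\int_0^\infty\mathcal{D}\,\rd t<\infty$.

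\textbf{Step 2 ($L^\infty$ bound on $v$).} This is the main obstacle, since the argument must pass through the indirect production chain $u\to h\to v$. From the $L\ln L$ bound on $u$, the ODE $h_t+h=u$ and convexity of $s\ln s$ give a uniform $L\ln L$ bound on $h$. In 2D, $L\ln L$ sources produce $e^{\beta v}$ integrability for the parabolic problem, and with $L^\infty$ additionally requiring $h\in L^p$ for some $p>1$. An alternative that avoids this issue is to estimate $w$ directly. Identity \eqref{key identity 1 homo-case} gives $w_t\le \mathcal{A}^{-1}[ue^{-v}]\le\mathcal{A}^{-1}[u]=w$, but this only yields exponential growth. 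Instead I would run the Section~\ref{section 5} energy argument for $\int\varphi u$ with the time-dependent weight $e^{-v}$. This requires a lower bound for $e^{-v}$ in integral form: $\int_\Omega e^{\beta v}\le C$ for all $\beta$, which follows from $\|v\|_{H^1}\le C$ via Moser--Trudinger. I would then estimate $\int u^2$ through $\int u^2\le \|e^{v}\|_{L^2}\|\varphi u\|$-type Hölder splits. A simpler route would be a time-dependent version of the proof of Lemma \ref{Lemma: w uniform upper bound} combined with the elliptic $W^{2,p}$ bound $\|\Delta v - v + h\|_{L^2}\le C$, which gives $v$ control by $h$ in $L^2$. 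So the key sub-goal is a uniform $\|u\|_{L^p}$ bound for some $p>1$; I would obtain it from $\frac{d}{dt}\int u\ln u$-type estimates using the dissipation $\int ue^{-v}|\nabla(\ln u - v)|^2$ together with the $e^{\beta v}$ integrability. Once $h\in L^p$ with $p>1$, parabolic smoothing gives $v\in L^\infty$ uniformly, and Step 3 applies.
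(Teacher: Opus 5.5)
Your Step 1 matches the paper. The Fenchel/Jensen inequality plus Moser--Trudinger gives uniform bounds on $\|v\|_{H^1}$, on $\|v_t\|_{L^2}=\|\Delta v-v+h\|_{L^2}$ and on $\int_\Omega u\ln u\,\rd x$. Your Step 3 also matches: once $v\le v^*$, use $e^{-v}\ge e^{-v^*}$ and rerun Section~\ref{section 5}.

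\textbf{The gap is Step 2, which is the actual content of the proposition: you never prove a uniform-in-time bound on $v$.} You do reach $\sup_t\int_\Omega e^{\beta v}\,\rd x<\infty$ for all $\beta>0$, which is exactly the paper's starting point. After that you only list candidate routes, and none of them closes.
\begin{itemize}
\item The Section~\ref{section 5} energy argument for $\int_\Omega\varphi u$ is circular here. The inequality $\varphi_t\le\gamma(v)\Delta\varphi+C\varphi$ with time-independent $C$ needs uniform upper bounds on $\tau\psi+v+\rho$, hence on $\psi$ (so on $w$) and on $v$ itself. Also, the preliminary bound on $\int_t^{t+1}\|u\|_{L^2}^2$ in Lemma~\ref{Lemma: u L2 homo-uniform-case} uses both $w\le w^*$ and $\gamma\ge\gamma_*>0$.
\item The H\"older split $\int_\Omega u^2=\int_\Omega e^{v}\varphi u\le\|e^{v}\|_{L^q}\|\varphi u\|_{L^{q'}}$ needs $\varphi u\in L^{q'}$ with $q'>1$. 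That is strictly more than any estimate available at this stage.
\item Your reduction to ``$\|u\|_{L^p}\le C$ for some $p>1$'' is left unproved. The dissipation $\int_\Omega ue^{-v}|\nabla(\ln u-v)|^2\,\rd x$ carries the weight $e^{-v}$, which has no pointwise lower bound, and it is only integrable in time. Testing the $u$-equation also produces a cross term with $\nabla v$, whose control again requires $h$, hence $u$, in some $L^p$.
\item A ``time-dependent version'' of Lemma~\ref{Lemma: w uniform upper bound} without $\gamma_*>0$ only reproduces the exponential growth of Lemma~\ref{Lemma: w pointwise upper bound}.
\end{itemize}

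The paper never needs an $L^p$ bound on $u$ before $v$ is bounded, because it works one level lower, at $w=\mathcal{A}^{-1}[u]$. From the exponential integrability of $v$ it obtains a uniform pointwise bound $w\le w^*$, following \cite[Lemma 18]{fujiejiang2021CVPDEcomparison}. It then bounds $v$ exactly as in Proposition~\ref{Prop: uniform upper bound for v in homo-case}: $\eta_t+\eta=w$ gives $\eta\le\eta^*$, and the maximum principle for $\mathcal{L}[v-\eta]=\eta-w$ gives $v\le v^*$. Only after that does the $L^p$/$L^\infty$ machinery for $u$ enter.

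To complete your proof you must supply this step, or an equivalent argument turning $\sup_t\int_\Omega e^{\beta v}\,\rd x<\infty$ into a uniform bound on $w$ or directly on $v$. A natural starting point is that \eqref{key identity 1 homo-case} together with $e^{-v}\le 1$ gives
\begin{equation*}
e^{v}w_t-\Delta w+w\le e^{v}w .
\end{equation*}
In this inequality the unbounded weight $e^{v}$ can be handled through $\sup_t\int_\Omega e^{\beta v}\,\rd x$, $\sup_t\|v_t\|_{L^2}$ and $\|w\|_{L^1}=m$.
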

	
	Since the functional $\mathcal{F}(u,v,h)$ is the same as that in \cite{laurencot2019DCDSBglobal}, we may recall \cite[Section 3]{laurencot2019DCDSBglobal} and derive the upper and lower bounds on $\mathcal{F}(u,v,h)$.
	
	\begin{lemma}
		Assume that $m < M$. There is a constant $C>0$ such that, for all $t\geq 0$,
		\begin{equation}\label{v H1,v_t L2,functional bounded}
			\|v\|_{L^2(\Omega)} + \|\nabla v\|_{L^2(\Omega)} + \|v_t\|_{L^2(\Omega)} + |\mathcal{F}(u,v,h)| \leq C.
		\end{equation}
	\end{lemma}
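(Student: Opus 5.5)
The plan is to combine the upper bound $\mathcal{F}(u,v,h)\le\mathcal{F}(u_0,v_0,h_0)$, which follows from Lemma \ref{Lemma: Lyapunov functional} because $\mathcal{D}\ge0$, with a lower bound derived from the Moser--Trudinger inequality, Lemma \ref{Lemma: Moser-Trudinger inequality}. The subcriticality $m<M$ is what lets us absorb the negative term $-\int_\Omega uv$.

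\emph{Lower bound via the entropy inequality.} By Jensen's inequality (equivalently, the Gibbs variational principle), for nonnegative $u$ with $\int_\Omega u=m$ and any $v$,
\begin{equation*}
\int_\Omega u\ln u\,\rd x-\int_\Omega uv\,\rd x\;\ge\; m\ln m-m\ln\int_\Omega e^{v}\,\rd x .
\end{equation*}
We apply \eqref{Moser-Trudinger inequality} with $z=v$ (using $v\ge0$). This gives
\begin{equation*}
\ln\int_\Omega e^{v}\,\rd x\le \ln K_0+\frac{\|\nabla v\|_2^2}{8\pi}+\frac{\|v\|_1}{|\Omega|}.
\end{equation*}
Hence
\begin{equation*}
\mathcal{F}\ge m\ln m-m\ln K_0+\Bigl(\tfrac12-\tfrac{m}{8\pi}\Bigr)\|\nabla v\|_2^2+\tfrac12\|v\|_2^2-\tfrac{m}{|\Omega|}\|v\|_1+\tfrac12\|v_t\|_2^2 .
\end{equation*}
Here we have used that $\Delta v-v+h=v_t$. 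When $m<4\pi$ the coefficient $\tfrac12-\tfrac m{8\pi}$ is positive. The term $\|v\|_1$ is bounded by \eqref{v: L1}, or alternatively it can be absorbed into $\tfrac14\|v\|_2^2$ plus a constant.

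\emph{Radial case.} In the radial case we use \eqref{Moser-Trudinger inequality in a disk} instead. For $m<8\pi$ we choose $\varepsilon$ small enough that $\tfrac12-m(\tfrac1{16\pi}+\varepsilon)>0$; the $L^1$ term is handled the same way. Radial symmetry is preserved along the flow by uniqueness (Theorem \ref{Th.1.1}), so the inequality applies at every time.

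\emph{Conclusion.} Combining the two bounds, we obtain, for all $t\ge0$,
\begin{equation*}
c\bigl(\|\nabla v\|_2^2+\|v\|_2^2+\|v_t\|_2^2\bigr)-C\le\mathcal{F}(u,v,h)\le\mathcal{F}(u_0,v_0,h_0).
\end{equation*}
This yields the bounds on $\|v\|_{H^1}$ and $\|v_t\|_{L^2}$. The bound $|\mathcal{F}|\le C$ then follows from the two-sided estimate, since the lower bound is now bounded below by $-C$.

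\emph{Main obstacle.} The only delicate step is verifying that the constants in the Moser--Trudinger inequality give exactly the thresholds $4\pi$ and $8\pi$. This amounts to the coefficient comparison $m/(8\pi)<1/2$, respectively $m/(16\pi)<1/2$. We also need to check that $\mathcal{F}(u_0,v_0,h_0)$ is finite under \eqref{A0}, which is immediate.
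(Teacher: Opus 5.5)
Your proposal is correct and follows the paper's route: the upper bound comes from $\mathcal{D}\ge 0$, and the lower bound $\mathcal{F}\ge\mathcal{F}_0$ is estimated by the Jensen/Moser--Trudinger argument of Nagai--Senba--Yoshida. The paper cites that argument with an extra factor $(1+\varepsilon_0)$, which retains a term $\varepsilon_0\int_\Omega uv\,\rd x$ not needed for this lemma; you write out the $\varepsilon_0=0$ version directly, and you also state explicitly that radial symmetry is preserved by uniqueness, a point the paper leaves implicit.
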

	\begin{proof}
		According to Lemma \ref{Lemma: Lyapunov functional}, together with the nonnegativity of $\mathcal{D}$, we can easily derive that
		\begin{equation}\label{functional: upper bound}
			\mathcal{F}(u,v,h)\leq \mathcal{F}(u_0,v_0,h_0) < \infty.
		\end{equation}
		Regarding the lower bound, we denote
		\begin{equation}\label{Def: KS functional}
			\mathcal{F}_0(u,v) := \int_\Omega (u\ln{u}-uv)\;\rd x +\frac{1}{2}\left(\|v\|^2_{L^2(\Omega)} + \|\nabla v\|^2_{L^2(\Omega)}\right).
		\end{equation}
		By comparing the definitions of the two functionals, we deduce that
		\begin{equation}\label{functional & KS functional}
			\mathcal{F}(u,v,h) = \mathcal{F}_0(u,v) + \frac{1}{2}\|\Delta v - v + h\|^2_{L^2(\Omega)} \geq \mathcal{F}_0(u,v).
		\end{equation}
		Since the functional $\mathcal{F}_0(u,v)$ is the same as that of the classical Keller--Segel model, we may argue as in the proof of \cite[Lemma 3.4]{NagaiSenbaYoshida1997Application} to obtain that for any $\varepsilon>0$ and $\varepsilon_0>0$,
		\begin{equation}\label{KS functional: lower bound}
			\mathcal{F}_0(u,v) \geq \left[\frac{1}{2}-(\frac{1}{2M}+\varepsilon)(1+\varepsilon_0)^2 m\right]\|\nabla v\|^2_{L^2(\Omega)} + \varepsilon_0\int_\Omega uv \;\rd x + \frac{1}{2}\|v\|^2_{L^2(\Omega)} - C.
		\end{equation} 
		Since $m<M$, we can choose $\varepsilon>0$, $\varepsilon_0>0$ such that
		\begin{equation*}
			\frac{1}{2}-(\frac{1}{2M}+\varepsilon)(1+\varepsilon_0)^2 m > 0.
		\end{equation*}
		Hence we derive the lower bound for $\mathcal{F}(u,v,h)$.
		Combining \eqref{functional: upper bound}, \eqref{functional & KS functional} and \eqref{KS functional: lower bound}, we may further infer that 
		\begin{equation*}
			\begin{aligned}
				&\min\left\{\frac{1}{2}-(\frac{1}{2M}+\varepsilon)(1+\varepsilon_0)^2 m, \frac{1}{2}\right\}\left(\|v\|^2_{L^2(\Omega)} + \|\nabla v\|^2_{L^2(\Omega)} + \|\Delta v-v+h\|^2_{L^2(\Omega)}\right) \\
				&\quad \leq \mathcal{F}(u,v,h) + C \leq \mathcal{F}(u_0,v_0,h_0) + C.
			\end{aligned}  
		\end{equation*}
		The proof is completed by substituting the second equation of \eqref{Blow-up Problem} into the preceding expression.
	\end{proof}
	
	With the foregoing preparations, we now proceed to prove the uniform boundedness of the solution. We begin with establishing the uniform boundedness of $v$, adopting the approach in \cite[Section 6.1]{fujiejiang2021CVPDEcomparison}.
	
	\begin{lemma}\label{subcritical mass: boundedness of v}
		Assume that $m<M$. There exists $C>0$ depending on $\Omega$ and the initial data such that 
		\begin{equation}\label{subcritical mass v}
			v(x,t) \leq v^*, \quad (t,x) \in [0,\infty)\times\Omega.
		\end{equation}
	\end{lemma}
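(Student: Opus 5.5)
The plan is to bound $w$ uniformly, as in Lemma \ref{Lemma: w uniform upper bound}, and then transfer the bound to $v$ through $\eta$, exactly as in Propositions \ref{Prop: v upper bound} and \ref{Prop: uniform upper bound for v in homo-case}. Since $\gamma(v)=e^{-v}$ has no positive lower bound, the first step must use the energy information from \eqref{v H1,v_t L2,functional bounded} rather than \eqref{A2}.

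\textbf{Step 1: reduce to $w$.} Recall $\delta\eta_t+\eta=w$ and $\mathcal{L}[v-\eta]=\frac{\tau}{\delta}(\eta-w)$, with $\tau=\delta=1$. If we can show $\sup_{t\ge0}\|w(t)\|_{L^\infty(\Omega)}\le C$, the argument of Proposition \ref{Prop: uniform upper bound for v in homo-case} gives $\eta\le C$ and then $v\le C$. So everything reduces to a time-independent bound for $w$.

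\textbf{Step 2: a differential inequality for $w$.} Following \cite[Section 6.1]{fujiejiang2021CVPDEcomparison}, rewrite \eqref{key identity 1 homo-case} as
\[
w_t + e^{-v}u = \mathcal{A}^{-1}[e^{-v}u] \le \mathcal{A}^{-1}[u] = w .
\]
Substituting $u = w-\Delta w$ turns this into
\[
w_t - e^{-v}\Delta w + e^{-v}w \le w .
\]
This is a nondegenerate parabolic inequality as long as $v$ stays bounded, but we only know $v\in H^1$ uniformly. The idea is to control $w$ by a comparison function that grows only mildly in time. By Lemma \ref{Lemma: w pointwise upper bound}, $w\le w_0e^{t}$, so pointwise comparison alone is not enough.

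\textbf{Step 3: the key estimate.} Fix $t_0\ge0$. On $[t_0,t_0+1]$ we use:
\begin{itemize}[leftmargin=*]
\item $\|w\|_{W^{1,q}(\Omega)}\le C(q)$ for all $q<2$, uniformly in time, by elliptic regularity for $\mathcal{A}w=u$ with $\|u\|_{L^1(\Omega)}=m$ in 2D.
\item $v\le \eta + C$ from the $v$–$\eta$ relation, where $\eta\le \sup_{[t_0-1,t_0+1]}w + C$.
\end{itemize}
The difficulty is that this is circular. To break the circle, combine the $H^1$ bound on $v$ with the Moser--Trudinger inequality (Lemma \ref{Lemma: Moser-Trudinger inequality}) to get $\int_\Omega e^{pv}\,\rd x\le C(p)$ for every $p$, uniformly in time.

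Then test \eqref{key identity 1 homo-case} against $w^{p}$ and work with $w$ directly. Since $u=\mathcal{A}[w]$,
\[
\int_\Omega e^{-v}u\,w^{p}\,\rd x = \int_\Omega e^{-v}\bigl(-\Delta w + w\bigr)w^{p}\,\rd x .
\]
Integrating by parts gives a dissipation term $p\int_\Omega e^{-v}w^{p-1}|\nabla w|^2\,\rd x$, plus a cross term involving $\nabla v$. The cross term is absorbed using Young's inequality together with the uniform bounds $\|\nabla v\|_{L^2(\Omega)}\le C$ and $e^{v}\in L^{r}$ for every $r$.

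A Moser/Alikakos iteration on $\int_\Omega w^{p}\,\rd x$, with the degenerate weight $e^{-v}$ handled by Hölder's inequality against $\int_\Omega e^{rv}\,\rd x$, should then give $\|w\|_{L^\infty(\Omega)}\le C$ uniformly in time. As a backup route, show $\|w\|_{L^q(\Omega)}$ is bounded for all $q$, then use a semigroup estimate as in Lemma \ref{Lemma: w uniform upper bound}, treating $(1-e^{-v})$ times the relevant terms as a forcing.

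\textbf{Main obstacle.} The hardest step is the degeneracy of $e^{-v}$ in the iteration. I would first try to exploit the fact that the exponential integrability of $v$ supplied by Moser--Trudinger compensates for the vanishing weight.
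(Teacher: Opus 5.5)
Your Step~1 and the Moser--Trudinger bound $\sup_{t}\int_\Omega e^{kv}\,\rd x\le C(k)$ match the paper. The paper then takes the uniform bound on $w$ from \cite[Lemma 18]{fujiejiang2021CVPDEcomparison} and transfers it to $v$ through $\eta$. The problem is Step~3, which is the whole content of the lemma: the mechanism you propose there does not work.

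Testing \eqref{key identity 1 homo-case} with $w^p$ and expanding $\int_\Omega e^{-v}uw^p\,\rd x$ produces the cross term $-\int_\Omega e^{-v}w^p\nabla v\cdot\nabla w\,\rd x$. After Young's inequality it leaves $\frac{1}{2p}\int_\Omega e^{-v}w^{p+1}|\nabla v|^2\,\rd x$. With only $\|\nabla v\|_{L^2(\Omega)}\le C$, this term needs either $\|w\|_{L^\infty(\Omega)}$, which is what you are proving, or a uniform $L^{2+\varepsilon}$ bound on $\nabla v$, which the energy does not give. Exponential integrability of $v$ cannot help: it controls weights $e^{kv}$ with $k>0$, whereas here the weight is $e^{-v}\le 1$ and the bad factor is $|\nabla v|^2$. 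Moreover, in 2D, $\sup_t\|w\|_{L^r(\Omega)}\le C(r)$ for every $r<\infty$ already follows from $\|u\|_{L^1(\Omega)}=m$. So an iteration on $\int_\Omega w^p\,\rd x$ is useful only if it reaches $L^\infty$ with controlled constants, and that requires a clean dissipation term you do not have.

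Your backup route fails for a similar reason. Writing $w_t+\mathcal{A}[w]=(1-e^{-v})u+\mathcal{A}^{-1}[e^{-v}u]$, the forcing is bounded only in $L^1(\Omega)$. The $L^1\to L^\infty$ smoothing rate $t^{-1}$ of the Neumann heat semigroup in 2D is not integrable, so you need $L^q$ control of $u$ for some $q>1$, at least on average in time. Nothing in your plan supplies this.

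The missing idea is to test \eqref{key identity 1 homo-case} with $u$ instead of $w^p$, as in Lemma~\ref{Lemma: u L2 homo-uniform-case}. By self-adjointness of $\mathcal{A}^{-1}$ this gives
\[
\frac12\frac{d}{dt}\left(\|w\|^2_{L^2(\Omega)}+\|\nabla w\|^2_{L^2(\Omega)}\right)+\int_\Omega e^{-v}u^2\,\rd x=\int_\Omega e^{-v}uw\,\rd x\le\frac12\int_\Omega e^{-v}u^2\,\rd x+\frac12\|w\|^2_{L^2(\Omega)},
\]
and $\nabla v$ never appears. The exponential integrability of $v$ then enters only through H\"older's inequality, in two places:
\begin{itemize}
\item Setting $y:=\|w\|^2_{H^1(\Omega)}=\int_\Omega uw\,\rd x\le\left(\int_\Omega e^{-v}u^2\,\rd x\right)^{1/2}\left(\int_\Omega e^{v}w^2\,\rd x\right)^{1/2}$ gives $y'+cy^2\le C$. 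Hence $y$ is bounded and $\int_t^{t+1}\int_\Omega e^{-v}u^2\,\rd x\,\rd s\le C$.
\item For $q\in(1,2)$, $\|u\|^2_{L^q(\Omega)}\le\|e^{v}\|_{L^{q/(2-q)}(\Omega)}\int_\Omega e^{-v}u^2\,\rd x$. This gives $\int_t^{t+1}\|u\|^2_{L^q(\Omega)}\,\rd s\le C$.
\end{itemize}
Finally, $w_t\le\mathcal{A}^{-1}[e^{-v}u]\le w\le C\|u\|_{L^q(\Omega)}$ by 2D elliptic regularity. Integrating in time from $s$ to $t$ and averaging over $s\in[t-1,t]$ gives $w(t,x)\le C\int_{t-1}^t\|u\|_{L^q(\Omega)}\,\rd s\le C$ for $t\ge1$. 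Lemma~\ref{Lemma: w pointwise upper bound} covers $t\le 1$. With this uniform bound on $w$, your Step~1 completes the proof.
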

	\begin{proof}
		Thanks to Lemma \ref{Lemma:u,v,h L1-estimates}, Lemma \ref{Lemma: Moser-Trudinger inequality} and \eqref{v H1,v_t L2,functional bounded}, we infer that for any $k>0$
		\begin{equation*}
			\int_\Omega e^{kv} \;\rd x  \leq C,
		\end{equation*}
		which allows us to argued as the proof of \cite[Lemma 18]{fujiejiang2021CVPDEcomparison} to obtain the uniform-in-time boundedness of $w$. Consequently, we may derive the uniform-in-time upper bound for $v$ by following the approach outlined in Section \ref{section 5}.
	\end{proof}

	\noindent \textbf{Proof of Proposition \ref{Prop: uniform boundedness with sub-critical mass}.}  Given that $m<M$, the uniform-in-time boundedness of $v$ on $[0,\infty)$ has been established in Lemma \ref{subcritical mass: boundedness of v}. We can therefore deduce that $e^{-v^{*}}\leq \gamma(v)=e^{-v}\leq 1$. The corresponding boundedness of $u$ and $h$ then follows from Theorem \ref{Th.1.2}. Thus the proof is completed.\qed
	
	\subsection{Unboundedness with super-critical mass}
	
	In this part, we prove  unboundedness of classical solution with some super-critical mass following the approach in \cite{Horstmann2001EJAMblow-up}. To begin with, we show the compactness of bounded trajectories.
	
	\begin{proposition}\label{Prop: stationary solution}
		Assume that there is a constant $\Lambda >0$ such that
		\begin{equation}\label{bounded solutions}
			u(t) + v(t) + h(t) \leq \Lambda, \quad t\geq 0.
		\end{equation}
		Then there exists a sequence $\{t_k\}_{k\geq 1} \subset (0, \infty)$ with $\lim\limits_{k \to \infty} t_k = \infty$ and corresponding nonnegative solutions $(u_s, v_s,h_s)$ with $m = \|u_0\|_{L^1(\Omega)}$ such that
		\begin{subequations}
			\begin{align}
				&\lim\limits_{k \to \infty} (u(t_k), v(t_k),h(t_k)) = (u_s, v_s,h_s) \quad \text{in } \left(C^2(\overline{\Omega})\right)^2\times C^0(\overline{\Omega}),\label{convergence to the stationary solution}\\
				&\lim\limits_{k \to \infty} \mathcal{F}(u(t_k),v(t_k),h(t_k)) = \mathcal{F}(u_s,v_s,h_s),\label{functional converge to steady state}
			\end{align}
		\end{subequations}
		where 
		\begin{equation}\label{stationary solution u_s,v_s,h_s}
			u_s = h_s =  \frac{m e^{v_s}}{\int_\Omega e^{v_s} \;\rd x}, \quad v_s\geq 0,
		\end{equation}
		and $v_s$ solves
		\begin{equation}\label{stationary v_s solves}
			\left\{
			\begin{aligned}
				&-\Delta v_s + v_s =  \frac{m e^{v_s}}{\int_\Omega e^{v_s} \;\rd x} 
				&\text{in}\,&\Omega,\\
				&\nabla v_s\cdot\mathbf{n} = 0
				&\text{on}\,&\partial\Omega.
			\end{aligned}
			\right.
		\end{equation}
		In other words, $(u_s,v_s,h_s)$ is a stationary solution  of \eqref{Blow-up Problem}.
	\end{proposition}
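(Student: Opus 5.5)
Under the bound \eqref{bounded solutions} we have $e^{-\Lambda}\le\gamma(v)=e^{-v}\le1$. The system is therefore uniformly parabolic in $u$ with bounded coefficients. The plan is to (a) obtain uniform-in-time higher regularity for $(u,v,h)$ on $[1,\infty)$, (b) extract a convergent subsequence $t_k\to\infty$ along which the dissipation $\mathcal{D}$ vanishes, and (c) identify the limit through $\mathcal{D}=0$.

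\textbf{Step 1 (regularity).} Since $v\le\Lambda$ and $h\le\Lambda$, parabolic $L^p$ theory for the $v$-equation gives uniform bounds on $v$ in $W^{1,\infty}$, and Schauder theory on unit time windows then gives uniform H\"older bounds on $v$. Writing the $u$-equation as $u_t=\nabla\cdot(e^{-v}\nabla u-ue^{-v}\nabla v)$, with bounded coefficients and bounded $u$, De Giorgi--Nash/H\"older theory yields $\|u\|_{C^{\alpha,\alpha/2}}\le C$ on $[t,t+1]\times\overline\Omega$ for $t\ge1$. We then bootstrap through Schauder estimates for the $v$-equation: $h$ is H\"older in $x$ because $h(t)=e^{-t}h_0+\int_0^te^{-(t-s)}u(s)\,\rd s$ and $u$ is uniformly H\"older, though the contribution of $h_0$ only decays exponentially. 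Next we expand the $u$-equation in nondivergence form, $u_t=e^{-v}\Delta u+\dots$, and apply Schauder estimates again. This gives uniform bounds on $u,v$ in $C^{2+\alpha,1+\alpha/2}$ on $[t,t+1]$ for $t\ge2$, and on $h$ in $C^\alpha$.

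\textbf{Main obstacle.} The hard part is this bootstrap: controlling the H\"older seminorm of $h$ requires isolating the term $e^{-t}h_0$, which lies only in $W^{1,\infty}$ and gives a merely $W^{1,\infty}$ contribution. Here the first attempt would be to work with $h-e^{-t}h_0$ and absorb the remainder, using that $h_0\in W^{1,\infty}$ is Lipschitz, hence H\"older.

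\textbf{Step 2 (vanishing dissipation).} By Lemma \ref{Lemma: Lyapunov functional},
\[
\int_0^\infty\mathcal{D}(u,v,h)\,\rd t\le\mathcal{F}(u_0,v_0,h_0)-\inf_{t\ge0}\mathcal{F}.
\]
The infimum is finite because the bound \eqref{bounded solutions} makes every term of $\mathcal{F}$ bounded. Hence there are $t_k\to\infty$ with $\mathcal{D}(t_k)\to0$. The regularity of Step 1 and Arzel\`a--Ascoli let us pass to a further subsequence converging in $(C^2(\overline\Omega))^2\times C^0(\overline\Omega)$ to some $(u_s,v_s,h_s)$. Mass conservation gives $\int_\Omega u_s=m$, and nonnegativity passes to the limit. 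Since $\mathcal{F}$ is continuous in this topology ($u\ln u$ is continuous on bounded sets), \eqref{functional converge to steady state} follows.

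\textbf{Step 3 (identification).} From $\mathcal{D}(t_k)\to0$ we get $\|\Delta v-v+h\|_{L^2}(t_k)\to0$, so $\Delta v_s-v_s+h_s=0$. We also get $\int u e^{-v}|\nabla(\ln u-v)|^2=4\int e^{v}|\nabla\sqrt{ue^{-v}}|^2\to0$; the identity holds because $u e^{-v}|\nabla(\ln u-v)|^2=4e^{v}|\nabla\sqrt{ue^{-v}}|^2$. Passing to the limit gives $\nabla\sqrt{u_se^{-v_s}}=0$, so $u_se^{-v_s}$ is constant, and the mass constraint yields $u_s=me^{v_s}/\int_\Omega e^{v_s}$. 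To show $h_s=u_s$, we would additionally prove $\|h-u\|_{L^2}(t_k)\to0$. A first attempt: $h_t=u-h$ and the time regularity give $\int_t^{t+1}\|h_t\|$ control. Alternatively, replace $t_k$ by nearby times using $\int\mathcal{D}<\infty$ together with the uniform continuity in time of $u$ and $v$; combined with $v_t=\Delta v-v+h\to0$ this forces $h_t\to0$ along the sequence, i.e.\ $h=u$ in the limit. Finally $v_s\ge0$ by the maximum principle, since $-\Delta v_s+v_s=u_s\ge0$. This yields \eqref{stationary solution u_s,v_s,h_s} and \eqref{stationary v_s solves}.
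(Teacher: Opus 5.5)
Your overall strategy (uniform Schauder bounds, vanishing dissipation along $t_k\to\infty$, Arzel\`a--Ascoli, continuity of $\mathcal{F}$) is the paper's. Deducing $u_se^{-v_s}\equiv\mathrm{const}$ from the first term of $\mathcal{D}$ is fine, apart from a harmless slip: in fact $ue^{-v}|\nabla(\ln u-v)|^2=4|\nabla\sqrt{ue^{-v}}|^2$, with no factor $e^{v}$. The genuine gap is the identity $h_s=u_s$, which you only announce. It cannot follow from $\mathcal{D}(t_k)\to0$, because the zero set of $\mathcal{D}$ contains non-stationary states. For instance, for any $v$ with $v-\Delta v\ge0$, the triple $(ce^{v},v,v-\Delta v)$ has $\mathcal{D}=0$, although $h\neq u$ in general. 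So smallness of $v_t$ at isolated times says nothing about $h_t=u-h$. Your first attempt gives only boundedness of $\int_t^{t+1}\|h_t\|$, not smallness. Your second attempt asserts ``this forces $h_t\to0$'' without supplying a mechanism.

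The paper closes this with an extra dissipation estimate. With $\varphi=ue^{-v}$ one has $\varphi_t=e^{-v}\Delta\varphi-\varphi v_t$. Testing with $-\Delta\varphi$ and using $e^{-v}\ge e^{-\Lambda}$ and $\varphi\le\Lambda$ gives $\frac{d}{dt}\|\nabla\varphi\|_{L^2(\Omega)}^2+e^{-\Lambda}\|\Delta\varphi\|_{L^2(\Omega)}^2\le C(\Lambda)\|v_t\|_{L^2(\Omega)}^2$. Hence $\int_0^\infty\|u_t\|_{L^2(\Omega)}^2\,\rd t=\int_0^\infty\|\Delta\varphi\|_{L^2(\Omega)}^2\,\rd t<\infty$. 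Differentiating $h_t+h=u$ in time and testing with $h_t$ then gives $\int_0^\infty\|h_t\|_{L^2(\Omega)}^2\,\rd t<\infty$. Choosing $t_k$ so that $\|u_t\|_{L^2}+\|v_t\|_{L^2}+\|h_t\|_{L^2}\to0$ simultaneously at $t_k$ yields $u_s-h_s=\lim_k h_t(t_k)=0$ directly.

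Alternatively, your own idea can be completed by working on time windows instead of single instants. By your Step 1, the translates $(u,v,h)(t_k+\cdot)$ are precompact in $C([0,1];C^2(\overline{\Omega})^2\times C^0(\overline{\Omega}))$. Since $\int_{t_k}^{t_k+1}\|v_t\|_{L^2}^2\,\rd t\to0$ and $v_t=\Delta v-v+h$ converges uniformly, any limit $(\bar u,\bar v,\bar h)$ satisfies $\bar v(s)=\bar v(0)$ and $\Delta\bar v-\bar v+\bar h=0$ for all $s\in[0,1]$. Hence $\bar h=\bar v-\Delta\bar v$ is independent of $s$. Passing to the limit in $h(t_k+s)-h(t_k)=\int_0^s(u-h)(t_k+\sigma)\,\rd\sigma$ then forces $\bar u=\bar h$. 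Either way, you need smallness of $v_t$ on a whole time interval (or time-integrability of $h_t$), not merely at the instants $t_k$.
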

	\begin{proof} In order to prove the compactness, we need to derive some dissipative energy estimates.   From the energy-dissipation relation \eqref{Fuctional and Dissipation} and the second equation of \eqref{Blow-up Problem}, we infer that
		\begin{equation*}
			\int_0^t \int_\Omega ue^{-v}|\nabla(\ln{u}-v)|^2\;\rd x \;\rd s + \int_0^t\|\nabla v_t\|^2_{L^2(\Omega)}\;\rd s + 2\int_0^t\|v_t\|^2_{L^2(\Omega)}\;\rd s = \mathcal{F}(u_0,v_0,h_0) - \mathcal{F}(u,v,h) ,
		\end{equation*}
		for $t\geq 0$. Owing to the boundedness of $\mathcal{F}(u,v,h)$ and $\mathcal{F}(u_0,v_0,h_0)$, we obtain that
		\begin{equation}\label{v_t diss}
			\int_0^\infty \int_\Omega ue^{-v}|\nabla(\ln{u}-v)|^2\;\rd x \;\rd s<\infty; \quad \int_0^\infty\|v_t\|^2_{L^2(\Omega)}\;\rd s<\infty.
		\end{equation}
		Recalling $\varphi=e^{-v}u$ in the present setting, one verifies that
		\begin{equation*}
			\varphi_t = e^{-v}\Delta\varphi - \varphi v_t.
		\end{equation*}
		Multiplying the above equation by $-\Delta\varphi$ and integrating over $\Omega$ by parts, we obtain
		\begin{equation*}
			\frac{1}{2}\dfrac{d}{dt}\|\nabla\varphi\|^2_{L^2(\Omega)} + \int_\Omega e^{-v}|\Delta\varphi|^2\,\rd x = \int_\Omega\varphi\Delta\varphi v_t\,\rd x.
		\end{equation*}
		By virtue of \eqref{bounded solutions}, we have $e^{-\Lambda}\leq e^{-v}\leq 1$ and $\varphi<\Lambda$. By Young's inequality, we may therefore conclude that
		\begin{equation*}
			\frac{1}{2}\dfrac{d}{dt}\|\nabla\varphi\|^2_{L^2(\Omega)} + e^{-\Lambda} \|\Delta\varphi\|^2_{L^2(\Omega)} 
			\leq\Lambda\int_\Omega|\Delta\varphi v_t|\,\rd x
			\leq \frac{e^{-\Lambda}}{2}\|\Delta\varphi\|^2_{L^2(\Omega)} + C(\Lambda)\|v_t\|^2_{L^2(\Omega)}.
		\end{equation*}
		An integration of the above over $(0,t)$ along with \eqref{v_t diss} then gives rise to
		\begin{equation*}
			\|\nabla\varphi(t)\|^2_{L^2(\Omega)} + e^{-\Lambda}\int_0^t \|\Delta\varphi\|^2_{L^2(\Omega)}\,\rd s \leq C(\Lambda)\int_0^t \|v_t\|^2_{L^2(\Omega)}\,\rd s + \|\nabla\varphi(0)\|^2_{L^2(\Omega)} <\infty \quad \text{for all }t\geq 0.
		\end{equation*}
		Thus, recalling that $u_t=\Delta\varphi$, we obtain that
		\begin{equation*}
			\int_0^\infty \|u_t\|^2_{L^2(\Omega)}\,\rd s = \int_0^\infty \|\Delta\varphi\|^2_{L^2(\Omega)}\,\rd s <\infty.
		\end{equation*}
		Next, differentiating the equation for $h$ with respect to $t$, then multiplying the resultant by $h_t$, and integrating over $\Omega$, we can apply H\"older's inequality to derive that
		\begin{equation*}
			\frac{1}{2}\dfrac{d}{dt}\|h_t\|^2_{L^2(\Omega)} + \|h_t\|^2_{L^2(\Omega)} = \int_\Omega u_t h_t\,\rd x \leq \frac{1}{2}\|h_t\|^2_{L^2(\Omega)} + \frac{1}{2}\|u_t\|^2_{L^2(\Omega)}.
		\end{equation*}
		An integration over the time interval $(0,t)$ yields that
		\begin{equation*}
			\|h_t(t)\|^2_{L^2(\Omega)} + \int_0^t\|h_t\|^2_{L^2(\Omega)}\,\rd s \leq \int_0^t\|u_t\|^2_{L^2(\Omega)}\,\rd s + \|h_t(0)\|^2_{L^2(\Omega)} <\infty \quad \text{for all }t\geq0.
		\end{equation*}
		Thus we obtain
		\begin{equation*}
			\int_0^\infty \|h_t\|^2_{L^2(\Omega)}\,\rd s <\infty.
		\end{equation*}
		
		With the above preparations, we are ready to prove the convergence result. First, from the boundedness of $(u, v, h)$ and parabolic Schauder theory, we infer that $(u(t))_{t>0}$ and $(v(t))_{t>0}$ are relatively compact in $C^2(\overline{\Omega})$, whereas $(h(t))_{t>0}$ is relatively compact in $C^0(\overline{\Omega})$. Hence, for $\{t_k\}_{k\geq 1}$ with $\lim\limits_{k \to \infty} t_k = \infty$, one may extract a subsequence, still denoted by $\{t_k\}$ such that $(u(t_k),v(t_k),h(t_k))$ converges to some $(u_s,v_s)$ in $C^2(\overline{\Omega})$ and that $h(t_k)$ converges to some $h_s$ in $C^0(\overline{\Omega})$. Moreover, preceding dissipative estimates implies that
		\begin{equation}\label{v_t(t_k)}
			\|u_t(t_k)\|^2_{L^2(\Omega)}+
			\|v_t(t_k)\|^2_{L^2(\Omega)} +\|h_t(t_k)\|^2_{L^2(\Omega)}\to 0  \quad  \text{as } k\to\infty,
		\end{equation}
		which allows us to pass to the limit to conclude that $(u_s,v_s,h_s)$ satisfies \eqref{stationary solution u_s,v_s,h_s} and \eqref{stationary v_s solves}. This completes the proof.\end{proof}

	The remainder of this section is devoted to showing that there exists nonnegative initial data $(u_0, v_0, h_0)\in \mathcal{I}_m$ with $m\in (8\pi,\infty)\setminus4\pi \mathbb{N}$ such that the corresponding solution to \eqref{Blow-up Problem} blows up at time infinity. For given $m>0$, we put 
	\begin{equation}
		\mathcal{S}_m := \left\{(u,v,h)\in C^2(\overline{\Omega})\times C^2(\overline{\Omega})\times C^0(\overline{\Omega}) \,| \,(u,v,h)~\text{satisfy \eqref{stationary solution u_s,v_s,h_s} and \eqref{stationary v_s solves}}\right\}.
	\end{equation}
	Equivalently, $\mathcal{S}_m$ is the set of nonnegative stationary solutions $(u, v, h)$ to \eqref{Blow-up Problem} which belong to $C^2(\overline{\Omega})\times C^2(\overline{\Omega})\times C^0(\overline{\Omega})$ and for which $\int_\Omega u\;\rd x = m$. As argued in \cite[Section 4]{laurencot2019DCDSBglobal}, we recast the problem in a way suitable to apply results from \cite{Horstmann2001EJAMblow-up}. The process begins with obtaining a lower bound of the Lyapunov functional on $\mathcal{S}_m$ for suitable values of $m$. As established in \cite[Proposition 4.1]{laurencot2019DCDSBglobal}, we have the following proposition.
	
	\begin{proposition}\label{Prop: stationary solutions' functional bounded below}
		(a) Assume $m\in (4\pi,\infty)\setminus4\pi \mathbb{N}$. Then 
		\begin{equation*}
			\lambda_m := \inf\limits_{(u,v,h) \in \mathcal{S}_m} \mathcal{F}(u,v,h) > -\infty.
		\end{equation*}
		(b) Assume that $\Omega=B_R(0)$ for some $R>0$, $m\in (8\pi,\infty)$. Then 
		\begin{equation*}
			\lambda_m := \inf\limits_{(u,v,h) \in \mathcal{S}^{rad}_m} \mathcal{F}(u,v,h) > -\infty,
		\end{equation*}
		where $\mathcal{S}^{rad}_m:=\left\{\right(u,v,h)\in\mathcal{S}_m \, | \, u,v,h  \,\, \text{are radially symmetric}\}$.
	\end{proposition}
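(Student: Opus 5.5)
The plan is to reduce the statement to the corresponding lower bound for the classical Keller--Segel stationary problem, as in \cite[Proposition 4.1]{laurencot2019DCDSBglobal}. On $\mathcal{S}_m$ we have $u=h$ and $-\Delta v+v=u$, so $\Delta v-v+h=0$ and the extra term in \eqref{Def: functional} vanishes. By \eqref{functional & KS functional}, $\mathcal{F}(u,v,h)=\mathcal{F}_0(u,v)$ with $u=me^{v}/\int_\Omega e^{v}\,\rd x$. Substituting, $\int_\Omega u\ln u\,\rd x=\int_\Omega uv\,\rd x+m\ln m-m\ln\int_\Omega e^{v}\,\rd x$. Testing the stationary equation \eqref{stationary v_s solves} with $v$ gives $\int_\Omega uv\,\rd x=\|v\|_{H^1}^2$. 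Hence on $\mathcal{S}_m$
\begin{equation*}
\mathcal{F}(u,v,h)=\frac12\|v\|_{H^1(\Omega)}^2-m\ln\int_\Omega e^{v}\,\rd x+m\ln m=:J_m(v),
\end{equation*}
which is exactly the Keller--Segel (Liouville-type) energy evaluated at Neumann solutions of \eqref{stationary v_s solves}.

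The key step is a compactness result for solutions of \eqref{stationary v_s solves} with $m$ away from the quantized values. I will rely on the blow-up analysis of Brezis--Merle/Li--Shafrir type. This is adapted to the Neumann setting by Senba--Suzuki and Wang--Wei, and it is the input used in \cite{Horstmann2001EJAMblow-up} and \cite{laurencot2019DCDSBglobal}. The statement is this: if $v_k$ solve \eqref{stationary v_s solves} with masses $m_k\to m$ and $\|v_k\|_{L^\infty}\to\infty$, then $u_k$ concentrates as $\sum_i m_i\delta_{x_i}$, with $m_i=8\pi$ for interior points and $m_i=4\pi$ for boundary points. The $L^1$ part $v_k$ (noting $\int_\Omega v_k\,\rd x=m$) converges away from the blow-up points, and the mass is fully quantized, so $m\in4\pi\mathbb{N}$. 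Since $m\notin4\pi\mathbb{N}$, it follows that $\mathcal{S}_m$ is bounded in $L^\infty$. Elliptic regularity then bounds it in $C^2$, so $J_m$ is bounded below, and indeed bounded, on $\mathcal{S}_m$, which gives (a).

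For (b) I will restrict to radial solutions in $B_R(0)$. Radial concentration can occur only at the origin, since any interior concentration point other than $0$ would produce a circle of mass and hence infinite mass. Boundary concentration is likewise excluded for radial functions, because it would concentrate on all of $\partial B_R$. So blow-up forces $m=8\pi$, and for $m>8\pi$ radial stationary solutions are uniformly bounded. Alternatively, in the radial setting one can use the improved inequality \eqref{Moser-Trudinger inequality in a disk}, but that only controls $m<8\pi$, so the compactness route is the one to take.

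The main obstacle is the mass quantization step, in particular the boundary blow-up analysis under Neumann conditions, together with excluding residual mass, meaning that the weak limit carries no absolutely continuous part. I will quote it from the literature, namely Senba--Suzuki and the argument of \cite[Proposition 4.1]{laurencot2019DCDSBglobal}, rather than reprove it. The reduction of $\mathcal{F}$ to $J_m$ above is what makes that citation directly applicable.
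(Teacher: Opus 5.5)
Your proposal is correct and follows essentially the same route as the paper. The paper simply invokes \cite[Proposition 4.1]{laurencot2019DCDSBglobal}, which applies because, as you observe, the term $\frac12\|\Delta v-v+h\|_{L^2(\Omega)}^2$ vanishes on $\mathcal{S}_m$; $\mathcal{F}$ therefore reduces to the classical Keller--Segel functional on the same stationary set, whose lower bound rests on the Senba--Suzuki compactness and mass-quantization result you quote, with the radial restriction leaving only $8\pi$ at the origin. Your write-up just makes explicit the reduction and compactness step that the paper leaves inside the citation.
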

	
	We are now in a position to prove that for some initial data on the set $\mathcal{I}_m$, the functional $\mathcal{F}(u,v,h)$ is not bounded below.
	
	\begin{proposition}\label{Prop: Blow-up solutions}
		Assume $m\in(8\pi,\infty)\setminus4\pi \mathbb{N}$. Then 
		\begin{equation*}
			\inf\limits_{(u,v,h) \in \mathcal{I}_m} \mathcal{F}(u,v,h) = -\infty.
		\end{equation*}
	\end{proposition}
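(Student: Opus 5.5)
We will construct an explicit family of concentrating data in $\mathcal{I}_m$ along which $\mathcal{F}\to-\infty$. The idea is to reduce $\mathcal{F}$ to the classical Keller--Segel functional $\mathcal{F}_0$ from \eqref{Def: KS functional} by killing the extra term. Given any pair $(u,v)$ with $v\in W^{3,\infty}$, $\nabla v\cdot\mathbf{n}=0$, we choose
\begin{equation*}
h:=v-\Delta v .
\end{equation*}
Then $\Delta v-v+h=0$, so by \eqref{functional & KS functional} we have $\mathcal{F}(u,v,h)=\mathcal{F}_0(u,v)$. It then suffices to find $(u_\varepsilon,v_\varepsilon)$ with $\|u_\varepsilon\|_{L^1}=m$, $u_\varepsilon,v_\varepsilon\geq0$, $h_\varepsilon=v_\varepsilon-\Delta v_\varepsilon\geq0$, and $\mathcal{F}_0(u_\varepsilon,v_\varepsilon)\to-\infty$.

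For the concentration we take the standard Horstmann--Wang / Nagai--Senba--Yoshida construction adapted to $m>8\pi$. We fix $x_0\in\Omega$; if $m>8\pi$ we may in fact take $x_0$ interior, which is where the threshold $8\pi$ enters. We set
\begin{equation*}
v_\varepsilon(x)=\log\frac{\varepsilon^2}{(\varepsilon^2+\pi|x-x_0|^2)^2}\cdot\chi(x)+c_\varepsilon ,
\end{equation*}
suitably cut off near $x_0$ and made constant near $\partial\Omega$ so that the Neumann condition holds. We choose $u_\varepsilon=m\,e^{v_\varepsilon}/\int_\Omega e^{v_\varepsilon}$, possibly mollified. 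The constant $c_\varepsilon$ is chosen to ensure $v_\varepsilon\geq0$; adding constants to $v$ changes $\mathcal{F}_0$ only by a controlled amount ($-mc+\tfrac12|\Omega|c^2+c\int v$ terms), so we should instead keep $v_\varepsilon$ as the lifted bubble $\approx 2\log(1/\varepsilon)$ at the center. Using $u\ln u-uv$ with $u\propto e^{v}$ gives
\begin{equation*}
\int u\ln u-uv=m\ln m-m\ln\int e^{v}.
\end{equation*}
For the bubble profile of height $2\log(1/\varepsilon^2)$ scale, $\ln\int e^{v_\varepsilon}\sim 2\log(1/\varepsilon)$ up to constants, while $\tfrac12\|\nabla v_\varepsilon\|_2^2\sim 8\pi\log(1/\varepsilon)$ and $\|v_\varepsilon\|_2^2=O(\log^2$-free$)$ bounded appropriately. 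Hence $\mathcal{F}_0\approx(8\pi-m)\cdot 2\log(1/\varepsilon)\cdot\tfrac{1}{2}\cdot(\dots)\to-\infty$ precisely when $m>8\pi$. We follow the computation of \cite{Horstmann2001EJAMblow-up} / \cite[Section 4]{laurencot2019DCDSBglobal}, which is exactly this functional.

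The main obstacle is the sign constraint $h_\varepsilon=v_\varepsilon-\Delta v_\varepsilon\geq0$, together with regularity $h\in W^{1,\infty}$, which needs $v\in W^{3,\infty}$. The bubble $\log\frac{1}{(\varepsilon^2+\pi|x|^2)^2}$ satisfies $-\Delta v=8\pi\varepsilon^2/(\varepsilon^2+\pi |x|^2)^2\cdot(\dots)>0$ in the core, so $-\Delta v\geq0$ there. The cutoff region, however, can create negative Laplacian contributions of order $O(1)$, independent of $\varepsilon$. We handle this by adding a large fixed constant $K$ to $v_\varepsilon$, making $v_\varepsilon\geq K$ dominate the bounded negative part of $-\Delta v_\varepsilon$ outside the core. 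The constant shifts $\mathcal{F}_0$ by $-Km+\tfrac12 K^2|\Omega|+K\int v_\varepsilon$. Since $\int v_\varepsilon$ stays bounded (the logarithmic singularity is integrable), this shift is $O(1)$ uniformly in $\varepsilon$ and does not affect divergence to $-\infty$. Finally, smoothness: we replace $u_\varepsilon$ by the smooth function $m e^{v_\varepsilon}/\int e^{v_\varepsilon}$, which lies in $W^{2,\infty}_B$ since $v_\varepsilon$ is smooth and constant near the boundary. Hence $(u_\varepsilon,v_\varepsilon,h_\varepsilon)\in\mathcal{I}_m$ and $\mathcal{F}(u_\varepsilon,v_\varepsilon,h_\varepsilon)\to-\infty$, which proves the proposition. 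The restriction $m\notin4\pi\mathbb{N}$ is not needed here; it is used only in combination with Proposition~\ref{Prop: stationary solutions' functional bounded below}.
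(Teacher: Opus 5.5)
Your argument is correct, and its central step takes a different route from the paper's.

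\textbf{Comparison with the paper.} The paper uses the scaled pair $u_0=a\overline{u}_\lambda\phi$, $v_0=a\overline{v}_{\lambda,r}\phi$, with $a=a(\lambda)\ge m/(8\pi)$ chosen so that $\int_\Omega u_0=m$. It then imports from Fujie--Jiang three separate asymptotics, for $\int_\Omega u_0\ln u_0$, $\int_\Omega u_0v_0$ and $\|v_0\|_{H^1}^2$, to get $\mathcal{F}_0(u_0,v_0)\le-16a\pi[(1-\varepsilon_1)a-1]\ln\lambda+C$. You instead keep a single bubble for $v$ and take the Boltzmann choice $u=me^{v}/\int_\Omega e^{v}$. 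This has three advantages:
\begin{itemize}
\item the mass constraint holds automatically;
\item this $u$ minimizes $\int_\Omega(u\ln u-uv)$ for the given $v$, and that part equals exactly $m\ln m-m\ln\int_\Omega e^{v}$;
\item consequently only $\ln\int_\Omega e^{v_\varepsilon}$ and $\|v_\varepsilon\|_{H^1}^2$ need to be estimated, which is elementary and self-contained.
\end{itemize}
For $h$ the two constructions coincide. Your $h=\tilde v-\Delta\tilde v$ with $\tilde v=v+K$ is exactly the paper's $h_0=v_0-\Delta v_0+K$. The paper keeps $v$ and pays $\tfrac12K^2|\Omega|$ in the extra term. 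You shift $v$ and pay $-Km+K\int_\Omega v+\tfrac12K^2|\Omega|$ inside $\mathcal{F}_0$. Both costs are bounded uniformly in the concentration parameter. You are also right that $m\notin4\pi\mathbb{N}$ plays no role in this proposition.

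\textbf{First correction: where the lift goes.} The lift must be built into the profile before cutting off, for example $v_\varepsilon=\phi\cdot 2\ln\frac{\varepsilon^2+r^2}{\varepsilon^2+|x|^2}$. This is the paper's $\overline{v}_{\lambda,r}$ with $\lambda=1/\varepsilon$, up to the constant $\ln 8$. It must not be added as a constant $c_\varepsilon$ outside the cutoff, as in your first formula. With that formula:
\begin{itemize}
\item $v_\varepsilon$ is of size $\ln(1/\varepsilon)$ on the cutoff annulus and beyond, so $\|v_\varepsilon\|_{H^1}^2$ picks up terms of order $\ln^2(1/\varepsilon)$;
\item the cutoff terms in $-\Delta v_\varepsilon$ are of order $\ln(1/\varepsilon)$ and change sign, so your constant $K$ could not be taken independent of $\varepsilon$.
\end{itemize}
With the lift inside, $v_\varepsilon\ge0$, and $-\Delta v_\varepsilon\ge -K$ with $K$ independent of $\varepsilon$, exactly as in the paper.

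\textbf{Second correction: the gradient term.} You wrote $\tfrac12\|\nabla v_\varepsilon\|_{L^2}^2\sim 8\pi\ln(1/\varepsilon)$; the correct value is $16\pi\ln(1/\varepsilon)+O(1)$. Combine this with $\ln\int_\Omega e^{v_\varepsilon}=2\ln(1/\varepsilon)+O(1)$ and $\|v_\varepsilon\|_{L^2}+\int_\Omega v_\varepsilon=O(1)$. The result is $\mathcal{F}_0(u_\varepsilon,v_\varepsilon)=2(8\pi-m)\ln(1/\varepsilon)+O(1)$, which tends to $-\infty$ exactly when $m>8\pi$. The value $8\pi$ you wrote would instead produce the wrong threshold $4\pi$ for interior concentration.
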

	\begin{proof}
		Motivated by the construction in \cite{fujieSenba2019JDEblowup}, we follows the proof in \cite{fujiejiang2021CVPDEcomparison} to consider the following basis functions: 
		\begin{equation*}
			\overline{u}_\lambda(x) := \frac{8\lambda^2}{(1+\lambda^2|x|^2)^2},\quad 
			\overline{v}_{\lambda,r}(x) := 2\ln{\frac{1 + \lambda^2r^2}{1 + \lambda^2|x|^2}} + \ln{8} \quad \text{for all }x\in\mathbb{R}^2.
		\end{equation*}
		where $\lambda\geq1$ and $r\in(0,1)$. Then for $x\in B_r(0)\triangleq\{x\in\mathbb{R}^2\,;\,|x|<r\}$, it follows that 
		\begin{equation*}
			0 < \overline{u}_\lambda(x) \leq 8\lambda^2,\quad
			\overline{v}_{\lambda,r}(x) > \ln{8} > 0.
		\end{equation*}
		Furthermore, both $\overline{u}_\lambda$ and $\overline{v}_{\lambda,r}$ are smooth in $B_r(0)$.
		In the subsequent analysis, we fix $r\in(0,1)$ and $q\in\Omega$ such that $B_{2r}(q) \triangleq \{x\in\mathbb{R}^2\,;\,|x-q|<2r\} \subset \Omega$. By translation, we may assume that $q=0$. Then we fix $r_1\in(0,r)$, which allows us to construct a smooth radially symmetric cutoff function $\phi_{r,r_1}$ satisfying
		\begin{equation*}
			\phi\left(B_{r_1}(0)\right)=1,\quad
			0\leq\phi\leq1,\quad
			\phi\left(\mathbb{R}^2\setminus B_r(0)\right)=0,\quad
			x\cdot\nabla\phi(x)\leq 0,
		\end{equation*}
		where $B_{r_1}(0)\triangleq\{x\in\mathbb{R}^2\,;\,|x|<r_1\}$.
		Clearly, $\phi\in C_c^\infty(B_r(0))$ and $D^\alpha\phi\in C_c^\infty(B_r(0))$ for all multi-indices $\alpha\in\mathbb{N}^2$.
		Now we define 
		\begin{equation*}
			u_0 := a\overline{u}_\lambda\phi,\quad 
			v_0 := a\overline{v}_{\lambda,r}\phi,
		\end{equation*}
		with some $a>1$. As is easily verified, $u_0,v_0\in C_c^\infty(B_r(0))$. In particular, we have $u_0,v_0\in W_B^{2,\infty}(\Omega)$ and $u_0,v_0\geq 0$.
		As shown in \cite[Lemma 20]{fujiejiang2021CVPDEcomparison}, one can choose
		\begin{equation*}
			a = a(\lambda) \in\left[\frac{m}{8\pi}, \frac{m(1+r_1^2)}{8\pi r_1^2}\right],
		\end{equation*}
		such that $\int_\Omega u_0\;\rd x=m$.
		Then it is proved in \cite[Lemma 21-22]{fujiejiang2021CVPDEcomparison} that there exists $C>0$ such that for all $\lambda\geq 1$, the following estimates are valid:
		\begin{equation*}
			\int_\Omega u_0\ln{u_0}\;\rd x \leq 16a\pi\ln{\lambda} + C,
		\end{equation*}
		\begin{equation*}
			\int_\Omega u_0v_0\;\rd x \geq 32a^2\pi\ln{\lambda} - C.
		\end{equation*}In addition, it is shown in \cite[Lemma 22]{fujiejiang2021CVPDEcomparison} that, for any $\varepsilon_1>0$, there also exists $C(\varepsilon_1)>0$ such that
		\begin{equation*}
			\frac{1}{2}\left(\|v_0\|^2_{L^2(\Omega)} + \|\nabla v_0\|^2_{L^2(\Omega)}\right)\leq 16(1+\varepsilon_1)a^2\pi\ln{\lambda} + C(\varepsilon_1).
		\end{equation*}
		By the definition of $\mathcal{F}_0$ and the above estimates, it follows that
		\begin{equation*}
			\begin{aligned}
				\mathcal{F}_0(u_0,v_0)
				&\leq 16a\pi\ln{\lambda} - 32a^2\pi\ln{\lambda} + 16(1+\varepsilon_1)a^2\pi\ln{\lambda} + C(\varepsilon_1)\\
				&= -16a\pi\left[(1-\varepsilon_1)a-1\right]\ln{\lambda} + C(\varepsilon_1)\\
				&\leq -2m\left(\frac{m(1-\varepsilon_1)}{8\pi} - 1\right)\ln{\lambda}+ C(\varepsilon_1).
			\end{aligned}
		\end{equation*}
		Fix some small $\varepsilon_1$ independent of $\lambda$ such that 
		\begin{equation*}
			\frac{m(1-\varepsilon_1)}{8\pi} - 1 > 0.
		\end{equation*}
		It then follows that 
		\begin{equation}\label{F_0 unbounded below}
			\mathcal{F}_0(u_0,v_0)\to -\infty\quad \text{as } \lambda\to\infty.
		\end{equation}
		
		Next, we construct a proper initial datum for $h$.
		By direct calculations, we obtain that
		\begin{equation*}
			\nabla \overline{v}_{\lambda,r} = -\frac{4\lambda^2x}{1+\lambda^2|x|^2},\quad 
			\Delta \overline{v}_{\lambda,r} = -\frac{8\lambda^2}{(1+\lambda^2|x|^2)^2}.
		\end{equation*}
		Thus we may infer that
		\begin{equation*}
			\begin{aligned}
				-\Delta v_0 + v_0 
				&= -\left(a\Delta\overline{v}_{\lambda,r}\phi + 2a\nabla\overline{v}_{\lambda,r}\cdot\nabla\phi + a\overline{v}_{\lambda,r}\Delta\phi\right) + a\overline{v}_{\lambda,r}\phi\\
				&= \frac{8a\lambda^2\phi}{(1+\lambda^2|x|^2)^2} + \frac{8a\lambda^2x}{1+\lambda^2|x|^2}\cdot\nabla\phi - a\overline{v}_{\lambda,r}\Delta\phi + a\overline{v}_{\lambda,r}\phi\\
				&\geq -\sup_{B_r(0)\setminus B_{r_1}(0)}\left(\frac{8a\lambda^2|x|}{1+\lambda^2|x|^2}|\nabla\phi|\right) - \sup_{B_r(0)\setminus B_{r_1}(0)}\left [a\left(2\ln{\frac{1 + \lambda^2r^2}{1 + \lambda^2|x|^2}} + \ln{8}\right)|\Delta\phi|\right],
			\end{aligned}
		\end{equation*}
		For $x\in B_r(0)\setminus B_{r_1}(0)$, it holds that
		\begin{equation*}
			\frac{8a\lambda^2|x|}{1+\lambda^2|x|^2} \leq \frac{8a\lambda^2|x|}{\lambda^2|x|^2} \leq \frac{8a}{r_1}, \quad \ln{\frac{1 + \lambda^2r^2}{1 + \lambda^2|x|^2}} \leq \ln{\frac{1 + \lambda^2r^2}{1 + \lambda^2r_1^2}}.
		\end{equation*}
		Since $\ln{\frac{1 + \lambda^2r^2}{1 + \lambda^2r_1^2}}$ is increasing with respect to $\lambda$, which implies that
		\begin{equation*}
			\ln{\frac{1 + \lambda^2r^2}{1 + \lambda^2r_1^2}} \leq \lim\limits_{\lambda\to\infty}\ln{\frac{1 + \lambda^2r^2}{1 + \lambda^2r_1^2}} = 2\ln{\frac{r}{r_1}}.
		\end{equation*}
		We may conclude that 
		\begin{equation*}
			-\Delta v_0 + v_0 \geq -\frac{8a}{r_1}\sup_{B_r(0)\setminus B_{r_1}(0)}|\nabla\phi| - a\left(4\ln{\frac{r}{r_1}} + \ln{8}\right)\sup_{B_r(0)\setminus B_{r_1}(0)}|\Delta\phi| \geq -K,
		\end{equation*}
		where $K$ is a positive constant depending on $r, r_1$ but independent of $\lambda$. Let $h_0 \triangleq -\Delta v_0 + v_0 + K$. Then $h_0\geq 0$, $h_0\in C^\infty(B_r(0))$, and in particular $h_0\in W^{1,\infty}(\Omega)$.
		Therefore, we have
		\begin{equation}\label{Delta v_0 -v_0 +h_0 L2}
			\|\Delta v_0 - v_0 + h_{0}\|^2_{L^2(\Omega)} = K^2|\Omega|.
		\end{equation}
		Recall that 
		$$\mathcal{F}(u,v,h)=\mathcal{F}_0(u,v) + \frac{1}{2}\|\Delta v - v + h\|^2_{L^2(\Omega)},$$
		we may conclude from \eqref{F_0 unbounded below} and \eqref{Delta v_0 -v_0 +h_0 L2} that 
		\begin{equation*}
			\mathcal{F}(u_0,v_0,h_0) =\mathcal{F}_0(u_0,v_0) + \frac{K^2|\Omega|}{2} \to -\infty \quad \text{as } \lambda \to \infty.
		\end{equation*}
		Thus, the proof is complete. 
	\end{proof}
	
	\noindent\textbf{Proof of Theorem \ref{Th.1.3}.} By Proposition \ref{Prop: uniform boundedness with sub-critical mass}, the proof for the first part in Theorem \ref{Th.1.3} concerning bounded solutions is complete. As to the unboundedness part, we consider $m\in(8\pi,\infty)\setminus4\pi\mathbb{N}$. According to Proposition \ref{Prop: Blow-up solutions}, there exists $(u_0,v_0,h_0)\in \mathcal{I}_m$ such that 
	\begin{equation}\label{construct contradiction}
		\mathcal{F}(u_0,v_0,h_0) < \lambda_m,
	\end{equation}
	with $\lambda_m$ being defined in Proposition \ref{Prop: stationary solutions' functional bounded below} (a). Assume that there is $\Lambda>0$ such that 
	\begin{equation*}
		\|u(t)\|_{L^\infty(\Omega)} + \|v(t)\|_{L^\infty(\Omega)} + \|h(t)\|_{L^\infty(\Omega)} \leq \Lambda \quad\text{for all } t\geq 0.
	\end{equation*}
	Then according to Proposition \ref{Prop: stationary solution}, there exists a sequence $(t_k)_{k\geq 1}$ with $\lim\limits_{k\to\infty}t_k=\infty$ such that
	for $(u_s,v_s,h_s)\in\mathcal{S}_m$,
	\begin{equation*}
		\mathcal{F}(u_s,v_s,h_s) = \lim\limits_{k\to\infty}\mathcal{F}((u(t_k),v(t_k),h(t_k)).
	\end{equation*}
	Combining the previous identity with Proposition \ref{Prop: stationary solutions' functional bounded below} we infer that
	\begin{equation*}
		\lambda_m 
		\leq \mathcal{F}(u_s,v_s,h_s) 
		= \lim\limits_{k\to\infty}\mathcal{F}((u(t_k),v(t_k),h(t_k)) 
		\leq \mathcal{F}(u_0,v_0,h_0), 
	\end{equation*}
	which contradicts \eqref{construct contradiction}. Thus we complete the proof. \qed

	\section{Globally boundedness for nonhomogeneous case}\label{section 7}
	In this section, we aim to prove that the presence of an external source with mere superlinear damping ensures the uniform-in-time boundedness of solutions in all space dimensions, requiring no additional assumptions on $\gamma$. Following the same approach as in Section \ref{section 4}, we first use the auxiliary functions from Section \ref{section 3} to bound $v$, and then establish a bound for $u$ via a comparison argument. Crucially, these bounds are uniform in time.
	
	\subsection{Uniform-in-time upper bound for $v$}
	We begin by establishing a uniform-in-time upper bound for $w$, which also yields the corresponding bound for $\psi$; the uniform-in-time upper bound for $v$ then follows immediately.
	
	\begin{lemma}
		Suppose that $\gamma(\cdot)$ satisfies \eqref{A1} and $f(\cdot)$ satisfies \eqref{H}. Then for all $(t,x) \in [0,T_{\max})\times\Omega$, there exist positive constants $w^*$, $\psi^*$ depending on $\Omega$, $\gamma$, $f$ and the initial data such that 
		\begin{equation}\label{w: uniform upper bound inhomo-case}
			w(t,x)\leq w^*,\quad \psi(t,x)\leq\psi^*.
		\end{equation}
	\end{lemma}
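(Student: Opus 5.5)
The plan is to argue pointwise in $x$ from the first key identity \eqref{key identity 1}, as in Lemma \ref{Lemma: w pointwise upper bound}. This time, however, the damping term $G(u)=\mathcal{A}^{-1}[uf(u)]$ should supply a linear decay in $w$ instead of the exponential growth $e^{\gamma^* t}$.

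\textbf{Step 1: a differential inequality for $w$.} Dropping $\varphi\geq 0$ in \eqref{key identity 1}, and using $\gamma\le\gamma^*$ together with the elliptic comparison principle, gives
\begin{equation*}
w_t \le \mathcal{A}^{-1}[\varphi]-G(u) \le \gamma^* w - G(u).
\end{equation*}

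\textbf{Step 2: bounding $G(u)$ from below by $w$.} Apply Lemma \ref{Lemma: f(s) inequality} with $\varepsilon=\frac{1}{\gamma^*+1}$. This gives $u\le \varepsilon uf(u)+C(\varepsilon)$ pointwise; the inequality holds for all $s\ge0$, so the possibly negative values of $uf(u)$ at small $u$ cause no trouble. Since $\mathcal{A}^{-1}$ is linear and order preserving with $\mathcal{A}^{-1}[1]=1$, we obtain
\begin{equation*}
w=\mathcal{A}^{-1}[u]\le \varepsilon G(u)+C(\varepsilon), \qquad\text{that is,}\qquad -G(u)\le -(\gamma^*+1)\bigl(w-C(\varepsilon)\bigr).
\end{equation*}

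\textbf{Step 3: ODE comparison.} Inserting Step 2 into Step 1 yields
\begin{equation*}
w_t\le -w+(\gamma^*+1)C(\varepsilon)\quad\text{for each fixed }x\in\Omega.
\end{equation*}
An ODE comparison then gives
\begin{equation*}
w(t,x)\le \max\bigl\{\|w_0\|_{L^\infty(\Omega)},(\gamma^*+1)C(\varepsilon)\bigr\}=:w^*\quad\text{on }[0,T_{\max})\times\Omega,
\end{equation*}
and $\|w_0\|_{L^\infty(\Omega)}\le\|u_0\|_{L^\infty(\Omega)}$.

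\textbf{Step 4: the bound for $\psi$.} As in Corollary \ref{Cor: psi upper bound}, we have $\mathcal{L}[\psi]=\mathcal{A}^{-1}[\varphi]\le\gamma^* w\le \gamma^* w^*$ with $\psi(0)=0$. The parabolic comparison principle then gives $0\le\psi\le\gamma^*w^*=:\psi^*$.

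The only delicate point is Step 2. We need a lower bound on $G(u)$ by $w$ despite the lack of a sign condition on $f$. This is handled by applying Lemma \ref{Lemma: f(s) inequality} before inverting $\mathcal{A}$, which is legitimate because the inequality holds for all $s\ge0$.
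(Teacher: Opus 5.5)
Your proposal is correct and follows essentially the same route as the paper. The paper adds $w$ to both sides of \eqref{key identity 1}, bounds $\mathcal{A}^{-1}[\gamma(v)u+u]\le(\gamma^*+1)\mathcal{A}^{-1}[u]\le G(u)+C$ using Lemma \ref{Lemma: f(s) inequality} with $\varepsilon=\frac{1}{\gamma^*+1}$, drops $\varphi\ge0$ to get $w_t+w\le C$ (your Steps 1--3, just rearranged), and then bounds $\psi$ exactly as in Corollary \ref{Cor: psi upper bound}.
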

	\begin{proof}
		By adding $w$ to both sides of the key identity \eqref{key identity 1}, it follows from \eqref{A1} and \eqref{f(s) inequality} that 
		\begin{align*}
			w_t + w + \varphi + G(u) 
			& = \mathcal{A}^{-1}[\gamma(v)u + u]\\
			& \leq \left(\gamma^* + 1\right)\mathcal{A}^{-1}[u]\\
			& \leq \left(\gamma^* + 1\right)\mathcal{A}^{-1}\left[\frac{uf(u)}{\gamma^* + 1} + \frac{C}{\gamma^* + 1}\right]\\
			& \leq G(u) + C.
		\end{align*}
		Since $\varphi \geq 0$, we have 
		\begin{equation*}
			w_t + w \leq C,
		\end{equation*}
		with $C>0$ depending only on $\gamma$ and $f$. Standard ODE techniques then yield $w \leq w^*$. In the same manner as argued in Corollary \ref{Cor: psi upper bound}, we may further derive the uniform-in-time boundedness of $\psi$ from that of $w$. This completes the proof.
	\end{proof}
	
	Following the argument in the proof of Proposition \ref{Prop: uniform upper bound for v in homo-case}, we obtain a time-independent upper bound for $v$ as well.
	\begin{corollary}\label{Cor: uniform upper bound for v in inhomo-case}
		Suppose that $\gamma(\cdot)$ satisfies \eqref{A1} and $f(\cdot)$ satisfies \eqref{H}. Then for $(t,x)\in [0,T_{\max})\times\Omega$, there exists a constant $v^*>0$ depending on $\Omega$, $\delta$, $\tau$, $\gamma$, $f$ and the initial data such that 
		\begin{equation}\label{v uniform upper bound}
			v(t,x)\leq v^*.
		\end{equation}
	\end{corollary}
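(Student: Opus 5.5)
The plan is to repeat the argument of Proposition \ref{Prop: v upper bound}, with the time-dependent bound $w^*(T)$ replaced by the uniform bound $w\le w^*$ from \eqref{w: uniform upper bound inhomo-case}. The first step is to apply $\mathcal{A}^{-1}$ to \eqref{Eq of h}. This step does not involve $f$, since \eqref{Eq of h} contains no source term, so we obtain $\delta\eta_t+\eta=w$ on $(0,T_{\max})\times\Omega$ exactly as before. Solving this linear ODE pointwise in $x$ gives
\begin{equation*}
0\le \eta(t,x)=e^{-\frac{t}{\delta}}\eta_0(x)+\frac{1}{\delta}\int_0^t e^{-\frac{t-s}{\delta}}w(s,x)\,\rd s\le \max\left\{\|\eta_0\|_{L^\infty(\Omega)},\,w^*\right\}\triangleq\eta^*,
\end{equation*}
and $\eta^*$ is independent of $t$ and of $T_{\max}$.

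The second step uses \eqref{Eq of v} together with $h=\mathcal{A}[\eta]$, which gives $\mathcal{L}[v-\eta]=-\tau\eta_t=\frac{\tau}{\delta}(\eta-w)$, with homogeneous Neumann boundary conditions. The right-hand side is bounded in absolute value by $\frac{\tau}{\delta}(\eta^*+w^*)$, uniformly in time.

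One point needs care here. A crude maximum-principle bound of the form $\|v_0-\eta_0\|_{L^\infty}+\int_0^t\|\text{source}\|\,\rd s$ would grow linearly in $t$. To avoid this, I will use the damping term $+\Psi$ in $\mathcal{L}$. I compare $v-\eta$ with the constant supersolution
\begin{equation*}
\overline{z}:=\max\left\{\|v_0-\eta_0\|_{L^\infty(\Omega)},\ \tfrac{\tau}{\delta}(\eta^*+w^*)\right\}.
\end{equation*}
Indeed, $\mathcal{L}[\overline{z}]=\overline{z}\ge\frac{\tau}{\delta}(\eta^*+w^*)\ge\mathcal{L}[v-\eta]$, and $\overline{z}$ dominates $v-\eta$ at $t=0$. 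The parabolic comparison principle then gives $v-\eta\le\overline{z}$ for all $t\in[0,T_{\max})$. This matches the time-independent form of the estimate already used in Proposition \ref{Prop: uniform upper bound for v in homo-case}.

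Adding the two bounds gives $v\le \eta^*+\overline{z}=:v^*$ on $[0,T_{\max})\times\Omega$. The constant $v^*$ depends only on $\Omega,\delta,\tau,\gamma,f$ and the initial data, through $w^*$ and $\|\eta_0\|_{L^\infty}\le\|h_0\|_{L^\infty}$. The main obstacle is obtaining a bound that does not grow with time, and the comparison with a constant supersolution using the zeroth-order term of $\mathcal{L}$ resolves it.
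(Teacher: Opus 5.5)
Your proposal is correct and takes essentially the same route as the paper, which simply invokes the argument of Proposition \ref{Prop: uniform upper bound for v in homo-case}: the uniform bound $w\le w^*$, the ODE $\delta\eta_t+\eta=w$ giving $\eta\le\eta^*$, and the maximum principle applied to $\mathcal{L}[v-\eta]=\frac{\tau}{\delta}(\eta-w)$. Your comparison with a constant supersolution, which uses the zeroth-order term of $\mathcal{L}$, is a precise justification of the time-independent maximum-principle estimate the paper already writes down in Proposition \ref{Prop: v upper bound}, so the linear-growth issue you guard against does not arise in the paper's argument either.
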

	
	
	
	\subsection{Uniform-in-time upper bound for $u$}
	The uniform-in-time boundedness of $v$, together with the monotonicity of $\gamma$, provides the strictly positive upper and lower bounds for $\gamma$ that are also time-independent.
	\begin{equation}\label{gamma's uniform bounds}
		0 < \gamma_* \leq \gamma(v) \leq \gamma^* < \infty, \quad \gamma_* \triangleq \gamma(v^*) > 0,
	\end{equation}
	where $v^*$ denotes the uniform-in-time upper bound of $v$. We thus obtain the following estimates for $u$ and $\varphi$:
	\begin{equation}\label{u<varphi<u}
		\gamma_*u \leq \varphi \leq \gamma^*u, \quad (t,x)\in[0,T_{\max})\times\Omega.
	\end{equation}
	
	%
	
	Thanks to the super-linearity of the external sources, we are now able to derive a time-independent upper bound for $\varphi$ by a direct comparison argument.
	The uniform-in-time boundedness of $u$ then follows from that of $\varphi$.
	
	\begin{proposition}\label{Prop: u upper bound inhomo-case}
		Suppose that $\gamma(\cdot)$ satisfies \eqref{A1} and $f(\cdot)$ satisfies \eqref{H}. Then for $(t,x)\in [0,T_{\max})\times\Omega$, there exists a constant $C>0$ depending on $\Omega$, $\delta$, $\tau$, $\gamma$, $f$ and the initial data such that 
		\begin{equation}
			u(t,x)\leq C.
		\end{equation}
	\end{proposition}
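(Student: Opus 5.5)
We follow the strategy of Proposition \ref{Prop: u upper bound}, now keeping track of the source term $-uf(u)$ and the extra auxiliary function $g$, and using the damping to obtain a bound independent of time. The plan is to derive a parabolic inequality for $\varphi=\gamma(v)u$, close it by comparison, and then pass to $u$ via \eqref{u<varphi<u}, which gives $u\le\varphi/\gamma_*$.

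\textbf{Step 1: the equation for $\varphi$.} From \eqref{Eq of u},
\begin{equation*}
\varphi_t=\gamma(v)\Delta\varphi-\frac{|\gamma'(v)|}{\gamma(v)}\varphi v_t-\gamma(v)uf(u).
\end{equation*}
Using the second key identity \eqref{key identity 2}, we substitute
\begin{equation*}
\delta v_t = w+\tau\Psi+\tau g-\tau\psi-v-\rho .
\end{equation*}

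\textbf{Step 2: discard the good terms and handle $g$.} Since $w\ge0$ and $\Psi\ge0$, the contributions of $w$ and $\tau\Psi$ have a good sign and can be dropped. The term $\tau g$ is the new difficulty, because $g$ need not be nonnegative when $f$ takes negative values. From \eqref{f(s) inequality}, or simply from $f\ge -C_f$ (which holds by (H) and continuity), we get $uf(u)\ge -C_f u$. The elliptic comparison principle then gives $G(u)\ge -C_f w\ge -C_f w^*$, and the parabolic comparison principle applied to \eqref{g solves} gives $g\ge -C_f w^*$. The quantities $\psi$, $v$ and $\rho$ are bounded uniformly in time by \eqref{w: uniform upper bound inhomo-case}, \eqref{v uniform upper bound} and \eqref{rho upper bound}. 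Since $v\in[0,v^*]$, the ratio $|\gamma'(v)|/\gamma(v)$ is bounded by a constant $K$. Combining these,
\begin{equation*}
\varphi_t\le \gamma(v)\Delta\varphi + C_1\varphi-\gamma(v)uf(u).
\end{equation*}

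\textbf{Step 3: exploit the damping.} This is the key step where time-uniformity enters. By (H), choose $s_0$ such that $f(s)\ge (C_1+1)/\gamma_*$ for $s\ge s_0$. At any point where $u\ge s_0$,
\begin{equation*}
\gamma(v)uf(u)\ge \gamma_* u f(u)\ge (C_1+1)u\ge (C_1+1)\varphi/\gamma^*\cdot(\gamma^*/\gamma^*)\ .
\end{equation*}
To avoid the mismatch between $u$ and $\varphi$, we instead pick $s_0$ with $f(s)\ge (C_1+1)\gamma^*/\gamma_*$, which ensures $\gamma(v)uf(u)\ge(C_1+1)\varphi$ whenever $u\ge s_0$. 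In particular, this holds whenever $\varphi\ge \gamma^* s_0$, since $\varphi\le\gamma^*u$. At such points,
\begin{equation*}
\varphi_t\le\gamma(v)\Delta\varphi-\varphi .
\end{equation*}

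\textbf{Step 4: comparison.} Let $\overline{\varphi}=\max\{\|\varphi(0)\|_{L^\infty(\Omega)},\gamma^*s_0\}$. We claim $\varphi\le\overline{\varphi}$ on $[0,T_{\max})\times\Omega$, using the Neumann condition $\nabla\varphi\cdot\mathbf{n}=0$. Suppose $\varphi$ first exceeds the constant at some time. Then, at an interior or boundary maximum point, the Hopf lemma together with $\Delta\varphi\le0$ gives $\varphi_t\le-\varphi<0$, a contradiction. The clean way to make this rigorous is to run the argument for $\varphi-\overline{\varphi}-\varepsilon e^{t}$, or to use a Stampacchia truncation: test the inequality with $u(\varphi-\overline{\varphi})_+$ in the spirit of Lemma \ref{Lemma: u L2 homo-uniform-case}, exploiting the duality $u_t=\Delta\varphi$.

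\textbf{Conclusion and main obstacle.} From Step 4 we obtain $u\le\overline{\varphi}/\gamma_*$, a bound independent of time. I expect the main obstacles to be the lower bound on $g$ in Step 2 and the rigorous comparison in Step 4, in particular the boundary maximum and the smoothness of $\varphi$ up to $t=0$. For the latter, I would start the argument from a small positive time, where the solution is classical.
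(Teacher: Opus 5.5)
Your proposal is correct and follows essentially the same route as the paper. You use the $\varphi$-equation with the second key identity, drop the $w$ and $\Psi$ terms, bound $g$ from below, exploit the superlinear damping, apply parabolic comparison, and finish with $u\le\varphi/\gamma_*$. The differences are minor: you get $g\ge -C_f w^*$ from $f\ge -C_f$ and $w\le w^*$, where the paper uses $sf(s)\ge s-C\ge -C$ to obtain $G(u)\ge -C$ directly; and you compare on the superlevel set $\{\varphi\ge\gamma^* s_0\}$, where the paper writes the global inequality $\varphi_t-\gamma(v)\Delta\varphi+\varphi\le C$ (also, since $\gamma(v)uf(u)=\varphi f(u)$, the threshold $f(s)\ge C_1+1$ already suffices in your Step 3).
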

	\begin{proof}
		From the definition of $\varphi$ and the equation for $u$, we have 
		\begin{equation*}
			\varphi_t
			= \gamma(v)u_t + \gamma'(v)v_tu 
			= \gamma(v)\Delta\varphi - \varphi f(u) - \frac{|\gamma'(v)|}{\gamma(v)}\varphi v_t.
		\end{equation*}
		With the aid of key identity \eqref{key identity 2}, we obtain that
		\begin{equation*}
			\begin{aligned}
				\varphi_t
				&= \gamma(v)\Delta\varphi - \varphi f(u) - \frac{1}{\delta}\frac{|\gamma'(v)|}{\gamma(v)}\left(w + \tau \Psi + \tau g\right)\varphi + \frac{1}{\delta}\frac{|\gamma'(v)|}{\gamma(v)}\left(\tau \psi + v + \rho\right)\varphi\\
				&\leq \gamma(v)\Delta\varphi - \varphi f(u) - \frac{\tau}{\delta}\frac{|\gamma'(v)|}{\gamma(v)} g\varphi + \frac{1}{\delta}\frac{|\gamma'(v)|}{\gamma(v)}\left(\tau \psi + v + \rho\right)\varphi,
			\end{aligned}
		\end{equation*}
		where we have used the fact that $w\geq 0$ and $\Psi\geq 0$. 
		Since $v$ is uniformly bounded and $\gamma(\cdot)\in C^3\left([0,\infty)\right)$, the quotient $\frac{|\gamma'(v)|}{\gamma(v)}$ stays uniformly bounded in time. 
		From \eqref{f(s) inequality} and the elliptic comparison principle, it follows that
		\begin{equation*}
			G(u) = \mathcal{A}^{-1}[uf(u)] \geq \mathcal{A}^{-1}[u-C] \geq - C,
		\end{equation*}
		where $C>0$ is a constant independent of time. Applying the parabolic comparison principle to \eqref{g solves}, we then derive that $g$ is uniformly bounded from below.
		With the uniform boundedness of $\rho$, $\psi$ and $v$ established in \eqref{rho upper bound}, \eqref{w: uniform upper bound inhomo-case} and \eqref{v uniform upper bound}, we may infer that
		\begin{equation*}
			\varphi_t + \varphi 
			\leq \gamma(v)\Delta\varphi + C\varphi - \varphi f(u).
		\end{equation*}
		It follows from \eqref{f(s) inequality} and \eqref{u<varphi<u} that
		\begin{equation*}
			\begin{aligned}
				\varphi_t + \varphi 
				&\leq \gamma(v)\Delta\varphi + C\gamma^*u - \gamma_*uf(u)\\
				&\leq \gamma(v)\Delta\varphi + C.
			\end{aligned}
		\end{equation*}
		Thus $\varphi$ satisfies
		\begin{equation*}
			\left\{
			\begin{aligned}
				&\varphi_t - \gamma(v)\Delta\varphi + \varphi \leq C, 
				&(t,x) &\in (0,T_{\max})\times\Omega,\\
				&\nabla\varphi \cdot \mathbf{n} = 0,
				&(t,x) &\in (0,T_{\max})\times\partial\Omega,\\
				&\varphi(0) = \gamma(v_0)u_0,
				&x &\in \Omega.
			\end{aligned}
			\right.
		\end{equation*}
		An application of the parabolic comparison principle gives $\varphi \leq C$, where the constant $C>0$ may depend on $\Omega$, $\delta$, $\tau$, $\gamma$, $f$ and the initial data, but is independent of time. Hence, the conclusion follows from \eqref{u<varphi<u}.
	\end{proof}

	\noindent \textbf{Proof of Theorem \ref{Th.1.4}.} 
	Under the assumptions of Theorem \ref{Th.1.4}, the uniform-in-time boundedness of $v$ on $[0,T_{\max})$ has been established in Corollary \ref{Cor: uniform upper bound for v in inhomo-case}. The corresponding boundedness of $u$ then follows from Proposition \ref{Prop: u upper bound inhomo-case}. And the global $L^\infty$-estimates for $h$ follow as in \eqref{h uniform L^infty homo-case}. By Theorem \ref{Th.2.1}, we note that $T_{\max} = \infty$. Thus Theorem \ref{Th.1.4} is proved.\qed
	
	\section*{Acknowledgments}
	\noindent This work is supported by National Natural Science Foundation of China (NSFC)
	under grants No.~12271505. 
	
	\normalem    
	\bibliographystyle{siam}
	\bibliography{refs}
\end{document}